\newtheorem{theorem}{Theorem}[section]
\newtheorem{lemma}{Lemma}[section]
\newtheorem{definition}{Definition}[section]
\newtheorem{proposition}{Proposition}[section]
\newtheorem{remark}{Remark}[section]
\def\geq{\geqslant}\def\leq{\leqslant}
\def\ge{\geqslant}\def\le{\leqslant}
\def\lim{\limits}
\def\lt{\left}
\def\rt{\right}
\begin{document}
\title{\bf\Large Boundedness of some operators on weighted amalgam spaces
\footnotetext{\hspace{-0.35cm}
\endgraf This project is supported
by the National Natural Science Foundation of China (Grant No.12061069)
.}}
\author{Yuan Lu$^1$, Songbai Wang$^2$, Jiang Zhou$^{1,}$\,\footnote{Corresponding author
e-mail: \texttt{Zhoujiang@xju.edu.cn}
}
\\[.5cm]
\small $^{1}$College of Mathematics and System Sciences, Xinjiang University, Urumqi 830046\\
\small People's Republic of China\\
\small  $^2$ College of Mathematics and Statistics, Chongqing Three Gorges University, Chongqing 404130\\
\small People's Republic of China}
\date{}
\maketitle

\vspace{-0.7cm}

\begin{center}

\begin{minipage}{13cm}
{
\small {\bf Abstract}\quad
Let $t\in(0,\infty)$, $p\in(1,\infty)$, $q\in[1,\infty]$, $w\in A_p$ and $v\in A_q$. We introduce the weighted amalgam space $(L^p,L^q)_t(\mathbb R^n)$
and show some properties of it.
Some estimates on these spaces for the classical operators in harmonic analysis, such as the Hardy--Littlewood maximal operator,
the Calder\'on--Zygmund operator, the Riesz potential, singular integral operators with the rough kernel,
the Marcinkiewicz integral, the Bochner-Riesz operator, the Littlewood-Paley $g$ function and the intrinsic square function, are considered.
Our main method is extrapolation. We obtain some new weak results for these operators on weighted amalgam spaces.
\\
}
{
\small {\bf Keywords}\quad Muckenhoupt's weight, operators, extrapolation, weak space, amalgam space.\\
{\bf MSC(2000) subject classification:}  42B25; 42B20.
}
\end{minipage}
\end{center}

\section{Introduction}\label{s1}

In 1926, Wiener\cite{1} first introduced amalgam spaces to formulate his generalized harmonic analysis.
For $p,\,q\in(0,\infty)$, the amalgam space $(L^{p},L^{q})(\mathbb R)$ is defined by
$$
(L^{p},L^{q})(\mathbb R):
=\lt\{f\in L_{\mathrm{loc}}^p:\,\lt[\sum_{n\in\mathbb Z}\lt(\int_n^{n+1}|f(x)|^p\,dx\rt)^\frac{q}{p}\rt]^\frac1q<\infty\rt\}.
$$

The first systematic study of these spaces was undertaken by Holland \cite{2} in 1975.
A large number of authors\cite{refJCC,4,5,6,7} research amalgam spaces or some applications of these spaces.
A significant difference in considering amalgam spaces instead of $L^ p$ spaces is that amalgam
spaces give information about the local $L^p$ , and global $L^q$, properties of the functions, while
$L^p$ spaces do not make that distinction.
To study the weak solutions of boundary value problems for a $t$-independent elliptic systems in the upper half plane,
Auscher and Mourgoglou \cite{AM} introduced the slice spaces in 2019. Moreover,  Auscher and Prisuelos-Arribas \cite{APA} studied the boundedness of
Calder\'on--Zygmund operators, the Hardy-Littlewood maximal operator and the fractional integral operator on slice spaces.
Recently, Ho \cite{Ho} obtained the boundedness of some classical operators, such as singular integral operators, the Fourier integral operator, the geometric
maximal operator, the maximal Bochner--Riesz means, the parametric Marcinkiewicz integral and the multiplier operators on Hardy Orlicz-slice spaces
introduced in \cite{ZYYW}.

The main purpose of this paper is to consider the boundedness of some operators on a new weighted amalgam space. To state our main results,
we recall some necessary definitions.
\par A weight $\omega$ is a positive and locally integrable function on $\mathbb R^n$.
For $p\in(0,\infty)$, the weighted Lebesgue spaces $L_\omega^p(\mathbb R^n)$
is defined as the set of all measurable functions $f$ on $\mathbb R^n$ such that
$$
\|f\|_{L_\omega^p(\mathbb R^n)}:=\lt[\int_{\mathbb R^n}|f(x)|^p\omega(x)\,dx\rt]^\frac1p<\infty.
$$
The weak weighted Lebesgue space $L^{p,\infty}_\omega(\mathbb R^n)$ is defined as the set of all measurable functions $f$ on $\mathbb R^n$ such that
$$
\|f\|_{L_\omega^{p,\infty}(\mathbb R^n)}:=\sup_{\alpha>0}\alpha\omega(\{x\in\mathbb R^n:\,|f(x)|>\alpha\})^\frac1p<\infty.
$$
For $p=\infty$,
$$
\|f\|_{L_\omega^{\infty}(\mathbb R^n)}:=\underset{x\in\mathbb R^n}{\mathrm{ess~sup}}|f(x)|<\infty.
$$
\begin{definition}\label{Defama}
Let $w,\,v$  be weights, $0<t<\infty$, $1<p<\infty$, and $1\leq q\leq \infty$.
We define the weighted amalgam space $(L_{w}^{p},L_{v}^{q})_t:= (L_{w}^{p},L_{v}^{q})_t(\mathbb R^n)$ as the
space of all measurable functions $f$ on $\mathbb R^n$ satisfying ${||f||}_{(L_{w}^{p},L_{v}^{q})_t}<\infty $,
where
$$
||f||_{(L_{w}^{p},L_{v}^{q})_t(\mathbb R^n)}:=\lt\|\lt(\frac{1}{w(B(\cdot,t))}\int_{B(\cdot,t)}
|f(y)|^pw(y)\,dy\rt)^\frac{1}{p}\rt\|_{L^q_v(\mathbb R^n)}.
$$
with the usual modification when $q=\infty$.
\end{definition}

\begin{remark}
If $w=v=1$, then $(L_{w}^{p},L_{v}^{q})(\mathbb R^n)_t$ is the slice space $(E_p^q)_t(\mathbb R^n)$
(see \cite{AM,APA}).
\end{remark}
\begin{definition}\label{DefWama}
Let $w,\,v$  be weights, $0<t<\infty$, $1<p<\infty$, and $1\leq q\leq \infty$.
The weak weighted amalgam space $W(L_{w}^{p},L^q_v)_t:= W(L^p_w,L^q_v)_t(\mathbb R^n)$
is defined as the space of all measurable functions $f$ on $\mathbb R^n$ satisfying
$||f||_{W(L_{w}^{p},L_{v}^q)_t}<\infty$,
where
$$
\lt\|f\rt\|_{W(L^p_w,L^q_v)_t(\mathbb R^n)}:=\underset{{\lambda>0}}{\mathop{\sup}}
\lambda\lt\|\chi_{\{x\in\mathbb R^n:|f(x)|>\lambda\}}\rt\|_{(L^p_w,L^q_v)_t(\mathbb R^n)}<\infty.
$$
\end{definition}
We still recall the definition of Muckenhoupt's weights $A_{p}(1\leq p\leq\infty)$.
These weights introduced in\cite{15} were used to characterize the boundedness of
the Hardy-Littlewood maximal operator on weighted Lebesgue spaces.
For a locllay integrable function $f$, we define the centered Hardy--Littlewood maximal operator,
for almost every $x\in\mathbb R^n$,
$$
Mf(x):=\sup_{\tau>0}\frac1{|B(x,\tau)|}\int_{B(x,\tau)}|f(y)|\,dy.
$$
\begin{definition}
Let $1<p<\infty$. A weight $w$ is said to be of class $A_p$ if
$$
\sup_{B\subset\mathbb R^n}\lt(\frac{1}{|B|}\int_{B}w(x)\,dx\rt)\lt(\frac{1}{|B|}
\int_{B}w(x)^{\frac{1}{1-p}} \,dx\rt)^{p-1}<\infty.
$$
A weight $w$ is said to be of class $A_1$ if
$$
M(w)(x)\leq Cw(x) \quad\quad for~almost~all~x\in\mathbb R^n
$$
for some positive constant $C$. We define $A_\infty:=\cup_{p\geq1}A_p$.
\end{definition}
Our main results are given as follows.
\begin{theorem}\label{ThMf}
Let $0<t<\infty$, $1<p<\infty$ and $w\in {{A}_{p}}$.\\
(a). If $1<q\leq\infty$ and $v\in A_q$, then we have
$$
||M(f)||_{(L_w^p,L_v^q)_t(\mathbb R^n)}\leq C ||f||_{(L_w^p,L_v^q)_t(\mathbb R^n)}.
$$
(b). If $q=1$ and $v\in A_1$, then we have
$$
||M(f)||_{W(L_w^p,L_v^1)_t(\mathbb R^n)}\leq  C||f||_{(L_w^p,L_v^1)_t(\mathbb R^n)}.
$$
The universal positive constant $C$ is independent of $f$ and $t$.
\end{theorem}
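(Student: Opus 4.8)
\noindent\emph{Proof proposal.} Write $\mathcal{A}_tf(x):=\big(\frac{1}{w(B(x,t))}\int_{B(x,t)}|f(y)|^pw(y)\,dy\big)^{1/p}$, so that $\|f\|_{(L^p_w,L^q_v)_t}=\|\mathcal{A}_tf\|_{L^q_v}$. The plan is to bound $\mathcal{A}_t(Mf)$ pointwise by the sum of a \emph{local} term and a \emph{global} term, each handled by a classical weighted estimate: boundedness of $M$ on $L^p_w$ (available since $w\in A_p$) controls the local term through the inner $L^p_w$--average, while boundedness of $M$ on $L^q_v$ — or, in part (b), the weak $(1,1)$ bound $M\colon L^1_v\to L^{1,\infty}_v$ since $v\in A_1$ — controls the global term through the outer $L^q_v$--norm. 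In addition I will use the scaling property $\|f\|_{(L^p_w,L^q_v)_{2t}}\le C\|f\|_{(L^p_w,L^q_v)_t}$ with $C$ independent of $t$ (one of the properties of the space; it follows from the doubling of $w$ and of $v$, both consequences of $w,v\in A_\infty$), which is what lets us return from scale $2t$ to scale $t$ after the localization.

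Fix $x$ and split $f=f\chi_{B(x,2t)}+f\chi_{\mathbb R^n\setminus B(x,2t)}$. For the local piece, subadditivity of $M$, the bound $M\colon L^p_w\to L^p_w$, and $w(B(x,2t))\le Cw(B(x,t))$ give
\[
\frac{1}{w(B(x,t))}\int_{B(x,t)}\big(M(f\chi_{B(x,2t)})(y)\big)^pw(y)\,dy\le\frac{C}{w(B(x,t))}\int_{B(x,2t)}|f|^pw\le C\,\mathcal{A}_{2t}f(x)^p .
\]
For the global piece the key observation is geometric: if $y\in B(x,t)$ and $B(y,r)$ meets $\mathbb R^n\setminus B(x,2t)$, then $r>t$, and for such $r$ one has $B(y,r)\subset B(x,2r)$; hence $M(f\chi_{\mathbb R^n\setminus B(x,2t)})(y)\le C\,M^{\sharp}_tf(x)$ where $M^{\sharp}_tf(x):=\sup_{r>t}\frac{1}{|B(x,r)|}\int_{B(x,r)}|f|$, a quantity \emph{independent of} $y\in B(x,t)$. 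It then remains to dominate $M^{\sharp}_tf$ by a maximal function of $\mathcal{A}_tf$, and for this I would use the $A_p$ condition in the form of the lower bound $\mathcal{A}_tf(y)\ge c_{[w]_{A_p}}\frac{1}{|B(y,t)|}\int_{B(y,t)}|f|$: integrating this over $y\in B(x,2\rho)$ and applying Fubini (using $B(\xi,t)\subset B(x,2\rho)$ whenever $\xi\in B(x,\rho)$ and $\rho>2t$) yields $\int_{B(x,\rho)}|f|\le C\int_{B(x,2\rho)}\mathcal{A}_tf$, whence $M^{\sharp}_tf(x)\le C\,M(\mathcal{A}_tf)(x)$. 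Combining the two pieces, $\mathcal{A}_t(Mf)(x)\le C\big(\mathcal{A}_{2t}f(x)+M(\mathcal{A}_tf)(x)\big)$; taking $L^q_v$--norms and applying the scaling property to the first term and $M\colon L^q_v\to L^q_v$ (valid since $v\in A_q$, $q>1$) to the second proves part (a).

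For part (b) the same pointwise analysis is run at the level of the distribution function: with $E_\lambda:=\{Mf>\lambda\}$ one must bound $\lambda\|\mathcal{A}_t\chi_{E_\lambda}\|_{L^1_v}$ uniformly in $\lambda$. For fixed $x$, on $B(x,t)$ we have $\chi_{E_\lambda}\le\chi_{\{M(f\chi_{B(x,2t)})>\lambda/2\}}+\chi_{\{CM^{\sharp}_tf(x)>\lambda/2\}}$, where the second indicator does not depend on the inner variable. On the first part, Chebyshev and the strong $L^p_w$ bound for $M$ (available because the inner exponent $p>1$) together with $w$--doubling give $w(B(x,t)\cap\{M(f\chi_{B(x,2t)})>\lambda/2\})\le C\lambda^{-p}\int_{B(x,2t)}|f|^pw$, so this part contributes at most $C\mathcal{A}_{2t}f(x)$ to $\lambda\mathcal{A}_t\chi_{E_\lambda}(x)$; on the second part one simply uses $\mathcal{A}_t\chi_{E_\lambda}(x)\le1$ and $M^{\sharp}_tf(x)\le CM(\mathcal{A}_tf)(x)$ to bound the contribution by $\lambda\chi_{\{CM(\mathcal{A}_tf)>\lambda\}}(x)$. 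Integrating in $v$ and taking the supremum over $\lambda$, the first term is $\le C\|\mathcal{A}_{2t}f\|_{L^1_v}\le C\|f\|_{(L^p_w,L^1_v)_t}$ by scaling, and the second is $\le C\|M(\mathcal{A}_tf)\|_{L^{1,\infty}_v}\le C\|\mathcal{A}_tf\|_{L^1_v}=C\|f\|_{(L^p_w,L^1_v)_t}$ by Muckenhoupt's weak $(1,1)$ bound for $v\in A_1$, which is the claim.

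The step I expect to be the main obstacle is the global estimate $M^{\sharp}_tf\le CM(\mathcal{A}_tf)$. The tempting but unworkable route is to dominate the tail by a \emph{weighted} maximal function of $(\mathcal{A}_tf)^p$, which would force the weighted maximal operator to be bounded on $L^{q/p}_v$ — false in general when $q<p$ (indeed already when $q/p\le1$). The correct move is to use the $A_p$ inequality as a \emph{lower} bound for $\mathcal{A}_tf$ and a Fubini argument, so that the \emph{unweighted} operator $M$ appears and $v\in A_q$ can be invoked directly; the localization-forced passage $t\rightsquigarrow 2t$ is then absorbed precisely by the scaling property. A secondary technical point is the $\lambda$--dependent bookkeeping in part (b), where one must keep the constants in the two-part split uniform in both $x$ and $\lambda$.
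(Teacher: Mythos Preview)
Your part (a) is essentially the paper's argument: the pointwise bound $\mathcal{A}_t(Mf)(x)\lesssim \mathcal{A}_{2t}f(x)+\overline{M}(\mathcal{A}_tf)(x)$ is exactly the content of the paper's Lemma~3.1, proved there by the same local/global split of the radius, with the same Fubini trick and the same use of the $A_p$ lower bound $\frac{1}{|B|}\int_B|f|\le C\mathcal{A}_tf$ to pass from unweighted $L^1$-averages to $\mathcal{A}_tf$. The conclusion then follows, as you say, from the scaling property (the paper's Lemma~2.4) and $M:L^q_v\to L^q_v$.

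For part (b) the routes genuinely diverge. The paper does \emph{not} rerun the pointwise decomposition at the level of the distribution function. Instead it argues by duality: it realizes $\|\chi_{\{Mf>\lambda\}}\|_{(L^p_w,L^1_v)_t}$ as a pairing with some $g$ in the dual space $(L^{p'}_{w'},L^{\infty}_{v'})_t$, dominates $g$ by the Rubio de Francia $A_1$-weight $[M(|g|^{1/\gamma})]^\gamma$, applies the classical weighted weak $(1,1)$ for $M$ against that weight, and closes with H\"older on the amalgam plus the strong bound for $M$ on the dual amalgam (just proved in part (a), since there $q'=\infty>1$). Your direct argument---Chebyshev in $L^p_w$ on the local piece and Muckenhoupt's weak $(1,1)$ on $L^1_v$ on the global piece---is correct and more elementary for the maximal operator itself. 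What the paper's duality/extrapolation scheme buys is transferability: once part (a) is in hand, the same template with $g\mapsto[M(|g|^{1/\gamma})]^\gamma$ proves the $(L^p_w,L^q_v)_t$- and $W(L^p_w,L^1_v)_t$-estimates for every operator that is weighted strong/weak type on $L^p_w$, which is how the paper handles Calder\'on--Zygmund operators, rough singular integrals, the Marcinkiewicz integral, etc., without repeating a pointwise analysis.

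One small caution: you assert that the scaling $\|f\|_{(L^p_w,L^q_v)_{2t}}\le C\|f\|_{(L^p_w,L^q_v)_t}$ ``follows from the doubling of $w$ and of $v$.'' Doubling alone gives this immediately only when $q=p$ (Fubini reduces the question to $v(B(y,2t))/w(B(y,2t))\sim v(B(y,t))/w(B(y,t))$). For $q\ne p$---in particular for $q=1$, which you need in part (b)---the paper's Lemma~2.4 uses extrapolation to reach $q>p$ and a separate good-$\lambda$/density argument to reach $1\le q<p$. Since you invoke it as a known property of the space there is no gap, but the justification you give in parentheses is not sufficient.
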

Let $\delta>0$ and let $\Delta$ be the off-diagonal in $\mathbb R^n\times\mathbb R^n$,
that is, $\Delta:=\mathbb R^n\times\mathbb R^n\setminus\{(x,x):\ x\in\mathbb R^n\}$.
The \emph{Calder\'on--Zygmund singular integral operator of non-convolution type} is a bounded linear operator
$T:\ L^2(\mathbb R^n)\rightarrow L^2(\mathbb R^n)$
satisfying that, for all $f\in C_c^\infty(\mathbb R^n)$ and $x\notin\mathrm{supp}\,(f)$,
$$
T(f)(x):=\int_{\mathbb R^n} K(x,y)f(y)dy,
$$
where the distributional kernel coincides with a locally integrable function $K$ defined away from the
diagonal on $\mathbb R^n\times\mathbb R^n$. When $K$ also satisfies that, for $x,\,y\in\mathbb R^n$ with $x\neq y$,
\begin{equation}\label{Eqsc}
|K(x,y)|\leq\frac{C_0}{|x-y|^n},
\end{equation}
\begin{equation}\label{Eqrc}
|K(x,y)-K(x,y+h)|+|K(x,y)-K(x+h,y)|\leq\frac{C_1|h|^\delta}{|x-y|^{n+\delta}},
\end{equation}
whenever $|x-y|\geq2|h|$, and we call $K$ the \emph{standard kernel}.

\begin{theorem}\label{ThTf}
Let $T$ be a Calder\'on-Zygmund operator with kernel $K$ satisfying \eqref{Eqsc} and \eqref{Eqrc}.
Suppose that $0<t<\infty$, $1<p<\infty$ and $w\in {{A}_{p}}$.\\
(a). If $1<q<\infty$ and $v\in A_q$, then we have
$$
||T(f)||_{(L_w^p,L_v^q)_t(\mathbb R^n)}\leq C ||f||_{(L_w^p,L_v^q)_t(\mathbb R^n)}.
$$
(b). If $q=1$ and $v\in A_1$, then we have
$$
||T(f)||_{W(L_w^p,L_v^1)_t(\mathbb R^n)}\leq  C||f||_{(L_w^p,L_v^1)_t(\mathbb R^n)}.
$$
The universal positive constant $C$ is independent of $f$ and $t$.
\end{theorem}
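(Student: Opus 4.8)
The strategy is extrapolation, exactly as announced. Recall the classical weighted bounds for a Calder\'on--Zygmund operator with a standard kernel: for every $r\in(1,\infty)$ and every $\omega\in A_r$ one has $\|Tf\|_{L^r_\omega(\mathbb R^n)}\le C\|f\|_{L^r_\omega(\mathbb R^n)}$, the constant depending only on $n$, $r$, the $A_r$-constant of $\omega$ and the kernel constants $C_0,C_1,\delta$; and at the endpoint $\|Tf\|_{L^{1,\infty}_\omega(\mathbb R^n)}\le C\|f\|_{L^1_\omega(\mathbb R^n)}$ for $\omega\in A_1$. Everything will be deduced from these via a Rubio de Francia argument on the amalgam scale; since $T$ is defined on $C_c^\infty(\mathbb R^n)$ and the latter is dense, it suffices to argue for $f\in C_c^\infty(\mathbb R^n)$.

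For part~(a) put $X:=(L^p_w,L^q_v)_t$ with $1<q<\infty$, and fix $p_0=2$. A direct computation with the defining norm -- carried out among the structural properties of these spaces in Section~2 -- shows that the $p_0$-convexification $X^{1/p_0}$, i.e.\ the space with norm $\big\||f|^{p_0}\big\|_X^{1/p_0}$, coincides with $(L^{pp_0}_w,L^{qp_0}_v)_t$. Since $A_p\subseteq A_{pp_0}$ and $A_q\subseteq A_{qp_0}$ we have $w\in A_{pp_0}$ and $v\in A_{qp_0}$, so Theorem~\ref{ThMf}(a) gives that $M$ is bounded on $X^{1/p_0}$, with norm independent of $t$. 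By the Fefferman--Stein inequality $\int(Mf)\,h\le c_n\int|f|\,Mh$ ($h\ge0$), boundedness of $M$ on a Banach function space is inherited by its associate space, so $M$ is bounded on $(X^{1/p_0})'$ as well. Running the Rubio de Francia iteration for $M$ on $(X^{1/p_0})'$ produces, for each nonnegative $h$ with $\|h\|_{(X^{1/p_0})'}\le1$, a weight $\mathcal R h$ with $h\le\mathcal R h$, $\|\mathcal R h\|_{(X^{1/p_0})'}\le2$ and $[\mathcal R h]_{A_1}\le C\|M\|_{(X^{1/p_0})'\to(X^{1/p_0})'}$, whence $\mathcal R h\in A_{p_0}$ with uniformly bounded constant. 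Then, using $|Tf|\ge0$, the self-duality $X^{1/p_0}=(X^{1/p_0})''$, H\"older's inequality on $X^{1/p_0}$, and the classical $L^{p_0}_{\mathcal R h}$-bound for $T$, one gets $\|Tf\|_X^{p_0}=\sup_h\int|Tf|^{p_0}h\le\sup_h\int|Tf|^{p_0}\mathcal R h\le C\sup_h\int|f|^{p_0}\mathcal R h\le C\|f\|_X^{p_0}$, with $C$ independent of $t$ and $f$. This is part~(a).

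For part~(b), $q=1$, the target is the weak amalgam space $W(L^p_w,L^1_v)_t$, and the same scheme is run with $p_0=1$ and the weak endpoint $T\colon L^1_\omega\to L^{1,\infty}_\omega$ ($\omega\in A_1$) in place of the strong bound. Now $X^{1/p_0}=X=(L^p_w,L^1_v)_t$, so what is needed is boundedness of $M$ on $X'$; by the description of the associate space in Section~2, $X'$ is again of amalgam type but with outer exponent $\infty$, and boundedness of $M$ on it is Theorem~\ref{ThMf}(a) in the case $q=\infty$ (the relevant conjugate weight lies in $A_\infty=\bigcup_{p\ge1}A_p$). The argument is then identical except that the pairing $\lambda\,(\mathcal R h)(\{|Tf|>\lambda\})\le\|Tf\|_{L^{1,\infty}_{\mathcal R h}}\le C\|f\|_{L^1_{\mathcal R h}}$ is used, which yields $\sup_{\lambda>0}\lambda\,\|\chi_{\{|Tf|>\lambda\}}\|_{(L^p_w,L^1_v)_t}\le C\|f\|_{(L^p_w,L^1_v)_t}$.

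The extrapolation step is soft; the genuine obstacle has already been absorbed into Theorem~\ref{ThMf} and the description of the associate space $(L^p_w,L^q_v)_t'$, and it is there that the difficulty lies: controlling the tension between the non-locality of $T$ (and of $M$) and the local-average structure of the norm, \emph{uniformly in $t$}. Concretely, on the ball $B=B(x,t)$ one splits $f=f\chi_{2B}+f\chi_{(2B)^c}$; the near part is dominated using the $L^p_w$-boundedness of $T$ together with the doubling inequality $w(2B)\le Cw(B)$, while the far part is handled through the size and smoothness conditions \eqref{Eqsc}--\eqref{Eqrc}, which generate a series over the dyadic dilates $B(x,2^kt)$ that must be summed back against the outer $L^q_v$-norm. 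Keeping every constant -- in particular the comparison of the $w$-averages at the scales $t$ and $2^kt$ -- free of $t$ is the delicate point, and this is exactly where the Muckenhoupt hypotheses $w\in A_p$, $v\in A_q$ and their doubling and reverse-H\"older consequences enter.
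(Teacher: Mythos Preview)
Your overall plan is the paper's: extrapolate from the classical weighted $L^r_\omega$-bounds for $T$ by dualizing the (convexified) amalgam norm, manufacturing an $A_1$ majorant of the dualizing function, applying the weighted bound for $T$ against that majorant, and closing with H\"older plus the $M$-bound on the dual amalgam. The paper carries this out with the Coifman--Rochberg weight $[M(|g|^{1/\gamma})]^\gamma\in A_1$ in place of your Rubio de Francia iterate $\mathcal R h$; the two constructions are interchangeable here, so the architecture is the same.

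There is, however, a genuine error in how you justify the key input ``$M$ is bounded on $(X^{1/p_0})'$''. The inequality you invoke, $\int (Mf)\,h\le c_n\int |f|\,Mh$ for $h\ge0$, is false: set $h\equiv 1$ and it would read $\|Mf\|_{L^1}\le c_n\|f\|_{L^1}$, which fails for every nonzero $f$. Fefferman--Stein gives only the $p>1$ version $\int(Mf)^p w\le C_p\int|f|^p Mw$ and the weak $(1,1)$ endpoint; neither transfers $M$-boundedness from a Banach function space to its associate in the way you claim. The correct (and shorter) fix is exactly what the paper does: use Lemma~\ref{DuiO} to identify $(X^{1/p_0})'$ explicitly as the weighted amalgam $(L^{(pp_0)'}_{w^{1-(pp_0)'}},L^{(qp_0)'}_{v^{1-(qp_0)'}})_t$, observe that $w\in A_p\subset A_{pp_0}$ forces $w^{1-(pp_0)'}\in A_{(pp_0)'}$ (and likewise $v^{1-(qp_0)'}\in A_{(qp_0)'}$), and then apply Theorem~\ref{ThMf}(a) directly to that space. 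With this repair your argument goes through and agrees with the paper's in both parts. Your final paragraph, describing a near/far decomposition over dyadic dilates of $B(x,t)$, sketches a different direct method that neither you nor the paper actually carries out; it is commentary rather than proof and should be dropped.
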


We also recall the definition of  $A_{p,q}$ weights which are closely related to the weighted boundedness of the fractional integral.
\begin{definition}
We say that a weight $w$ is said to be of class $A_{p,q}$, for $1<p,q<\infty$,
if there exists a constant $C > 0$ such that
$$
\sup_{B\subset\mathbb R^n}\lt(\frac{1}{|B|}\int_{B}w(x)^q\,dx\rt)^\frac{1}{q}\lt(\frac{1}{|B|}
\int_{B}w(x)^{-p'} dx\rt)^{\frac{1}{p'}}\leq C<\infty,
$$
where $p'$ is the conjugate exponent of $p$, that is, $\frac1p+\frac1{p'}=1$.
\par And say $w$ is in $A_{1,q}$ with $1<q<\infty$, if there exist constant $C>0$ such that
$$
\sup_{B\subset\mathbb R^n}\lt(\frac{1}{|B|}\int_{B}w(x)^q\,dx\rt)^\frac1q\lt(
\underset{B}{\mathrm{ess~sup}}\frac1{w(x)}\rt)\leq C<\infty.
$$
\end{definition}

\begin{proposition}\cite{refgf}
Suppose that $0<\alpha<n$, $1<p<\frac n\alpha$ and $\frac1q=\frac1p-\frac\alpha n$.
\begin{enumerate}[(i).]
         \item If $1<p$, then $w\in A_{p,q}$ if and only if $w^q\in A_{p\frac{n-\alpha}n}$;
         \item If $1<p$, $w\in A_{p,q}$, then $w^q\in A_q$ and $w^p\in A_p$;
         \item If $p=1$, then $w\in A_{1,q}$ if and only if $w^q\in A_1$.
\end{enumerate}
\end{proposition}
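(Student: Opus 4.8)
\noindent\emph{Proof sketch.} The plan is to reduce all three parts to the definitions of $A_{p,q}$ and $A_{s}$ together with pure exponent bookkeeping; the only external facts needed are: (a) $u\in A_{1}$ if and only if $\frac{1}{|B|}\int_{B}u\le C\,\mathrm{ess\,inf}_{B}\,u$ for every ball $B$; (b) the nesting $A_{r}\subseteq A_{s}$ for $1\le r\le s\le\infty$; and (c) the power--mean inequality $\big(\frac{1}{|B|}\int_{B}|g|^{r}\big)^{1/r}\le\big(\frac{1}{|B|}\int_{B}|g|^{s}\big)^{1/s}$ for $0<r\le s$.

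For (i), write $u:=w^{q}$ and raise the $A_{p,q}$ functional of $w$ at a ball $B$ to the $q$-th power:
$$
\lt[\lt(\frac{1}{|B|}\int_{B}w^{q}\rt)^{1/q}\lt(\frac{1}{|B|}\int_{B}w^{-p'}\rt)^{1/p'}\rt]^{q}
=\lt(\frac{1}{|B|}\int_{B}u\rt)\lt(\frac{1}{|B|}\int_{B}u^{-p'/q}\rt)^{q/p'},
$$
whose right-hand side is precisely the $A_{s}$ functional of $u$ with $s-1=q/p'$. Hence $w\in A_{p,q}$ if and only if $w^{q}\in A_{1+q/p'}$, the two implications being literally the same identity, with the same constant. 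It then remains to identify $1+q/p'$ with the index in the statement: from $\frac{1}{q}=\frac{1}{p}-\frac{\alpha}{n}$ one gets $\frac{q}{p'}=q-\frac{q}{p}=q-1-\frac{q\alpha}{n}$, so $1+\frac{q}{p'}=q\,\frac{n-\alpha}{n}$; moreover this index lies in $(1,q)$, the bound $<q$ being $\alpha>0$ and the bound $>1$ being $\frac{\alpha}{n}=\frac{1}{p}-\frac{1}{q}<1-\frac{1}{q}=\frac{1}{q'}$ (valid since $p>1$). This last observation makes $A_{1+q/p'}$ a bona fide Muckenhoupt class and will be reused in (ii).

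Part (iii) is the same argument in the limiting case $p=1$: the $A_{1,q}$ inequality $\big(\frac{1}{|B|}\int_{B}w^{q}\big)^{1/q}\le C\,\mathrm{ess\,inf}_{B}\,w$, raised to the $q$-th power, becomes $\frac{1}{|B|}\int_{B}w^{q}\le C^{q}\,\mathrm{ess\,inf}_{B}\,w^{q}$, which by (a) says exactly $w^{q}\in A_{1}$; both directions coincide. For (ii), $w^{q}\in A_{q}$ is immediate from (i) and the nesting $A_{q(n-\alpha)/n}\subseteq A_{q}$. For $w^{p}\in A_{p}$ I would argue directly: the $A_{p,q}$ condition gives $\big(\frac{1}{|B|}\int_{B}w^{-p'}\big)^{1/p'}\le C\big(\frac{1}{|B|}\int_{B}w^{q}\big)^{-1/q}$; raising this to the $p$-th power (note $p-1=p/p'$) and then using the power--mean bound $\big(\frac{1}{|B|}\int_{B}w^{p}\big)^{1/p}\le\big(\frac{1}{|B|}\int_{B}w^{q}\big)^{1/q}$, which holds because $p\le q$, yields
$$
\lt(\frac{1}{|B|}\int_{B}w^{p}\rt)\lt(\frac{1}{|B|}\int_{B}w^{-p'}\rt)^{p-1}
\le C^{p}\lt(\frac{1}{|B|}\int_{B}w^{p}\rt)\lt(\frac{1}{|B|}\int_{B}w^{q}\rt)^{-p/q}\le C^{p},
$$
which is the $A_{p}$ condition for $w^{p}$, since $(w^{p})^{1/(1-p)}=w^{-p'}$.

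The only place that needs real attention is the exponent arithmetic — principally the identity $1+q/p'=q\,\frac{n-\alpha}{n}$ and the verification that this index sits in $(1,q)$, so that the Muckenhoupt class makes sense and the nesting used in (ii) is applicable. Once that is settled, (i) and (iii) are algebraic identities and (ii) is a two-line consequence, so I do not anticipate any genuine analytic obstacle.
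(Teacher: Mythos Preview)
Your argument is correct and is the standard route to these facts; there is nothing to compare it against, because the paper states this proposition as a citation and gives no proof of its own.

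One point does deserve an explicit remark. Your exponent arithmetic produces $1+q/p'=q\,\frac{n-\alpha}{n}$, whereas the proposition as printed in the paper has the index $p\,\frac{n-\alpha}{n}$. These are genuinely different numbers (unless $p=q$, i.e.\ $\alpha=0$), and the index you obtain is the correct one: the identity $w\in A_{p,q}\Longleftrightarrow w^{q}\in A_{1+q/p'}$ is precisely what your display establishes, and under $\frac{1}{q}=\frac{1}{p}-\frac{\alpha}{n}$ this reads $w^{q}\in A_{q(n-\alpha)/n}$. The paper itself uses the $q$-version when it applies the proposition in the proof of Theorem~\ref{ThIf} (there $\theta$ plays the role of $q$ and the class invoked is $A_{\theta(n-\alpha)/n}$), so the printed $p$ is almost certainly a typo. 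You should flag this explicitly rather than write ``the index in the statement'' and then silently compute a different one. Apart from that, your handling of (ii) via the nesting $A_{q(n-\alpha)/n}\subseteq A_{q}$ for the first half and the power--mean inequality $p\le q$ for the second half, and of (iii) by taking $q$-th powers, is clean and complete.
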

\begin{proposition}\label{Eqweight}\cite{CMFA}
Let $1\leq p<\infty$, $w\in A_p$, then
\begin{enumerate}[(i).]
  \item A weight $\omega$ is doubling, that is, for any ball $B\in\mathbb R^n$, $\omega(2B)\leq C\omega(B)$,
where the positive constant $C$ is independent of $B$;
  \item $\omega\in A_\infty$, for every ball $B$ and every measurable
set $E\subset B$, there exist $C, \delta>0$ such that
\begin{equation}\label{Eqw}
\frac{\omega(E)}{\omega(B)}\leq C\lt(\frac{|E|}{|B|}\rt)^\delta.
\end{equation}
\end{enumerate}
\end{proposition}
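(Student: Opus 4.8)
The plan is to derive both assertions from the $A_p$ hypothesis through one self-improving estimate together with the reverse H\"older inequality. In outline: (Step~1) extract from the definition of $A_p$ the comparison $|E|/|B|\lesssim(w(E)/w(B))^{1/p}$, valid for every ball $B$ and every measurable $E\subset B$; (Step~2) deduce (i) by taking $E=B$ inside $2B$; (Step~3) upgrade the $A_p$ condition to a reverse H\"older inequality; (Step~4) deduce the quantitative bound \eqref{Eqw} of (ii) from reverse H\"older and H\"older. The assertion $\omega\in A_\infty$ in (ii) needs no argument, since $w\in A_p$ and $A_\infty=\cup_{p\ge1}A_p$ by definition.

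For Step~1, suppose first $1<p<\infty$. Given a ball $B$ and $E\subset B$, write $|E|=\int_E w^{1/p}w^{-1/p}\,dx$ and apply H\"older with exponents $p$ and $p'$:
$$
|E|\le\lt(\int_E w\,dx\rt)^{1/p}\lt(\int_B w^{1/(1-p)}\,dx\rt)^{1/p'}.
$$
Raising this to the power $p$, using $p/p'=p-1$, and inserting the $A_p$ bound on $B$ in the form $\big(\frac1{|B|}\int_B w^{1/(1-p)}\,dx\big)^{p-1}\le C\,|B|/w(B)$, one gets $|E|^p\le C\,w(E)\,|B|^p/w(B)$, i.e. $|E|/|B|\le C^{1/p}(w(E)/w(B))^{1/p}$; when $p=1$ the same holds, either because $A_1\subset A_p$ for all $p>1$ or directly from $M(w)\le Cw$. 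For Step~2, apply this with $2B$ in place of $B$ and $E=B$: since $|B|/|2B|=2^{-n}$, we obtain $w(B)/w(2B)\ge C^{-1}2^{-np}$, that is $w(2B)\le C2^{np}w(B)$ with $C$ depending only on $n$, $p$ and the $A_p$ constant of $w$; iterating covers $\lambda B$ for any $\lambda>1$.

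Step~3 is the heart of the matter: $w\in A_p$ implies a reverse H\"older inequality, i.e. there exist $s>1$ and $C>0$, depending only on $n$, $p$ and the $A_p$ constant, with
$$
\lt(\frac1{|B|}\int_B w^s\,dx\rt)^{1/s}\le\frac{C}{|B|}\int_B w\,dx\qquad\text{for every ball }B.
$$
I would prove this by a Calder\'on--Zygmund stopping-time argument: fix $B_0$, perform a dyadic Calder\'on--Zygmund decomposition of $w$ restricted to $B_0$ at each height above the average of $w$ over $B_0$, and feed the Step~1 estimate (which forces $\{w>\lambda\}$ to carry a fixed proportion of the $w$-mass of each selected cube) into a distribution-function iteration, obtaining an $L^s$ gain with $s-1$ small. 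Alternatively one just cites this classical fact. This is where essentially all the work lies.

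Finally, for Step~4, given the reverse H\"older inequality, for $E\subset B$ apply H\"older with exponents $s$ and $s'$:
$$
w(E)=\int_E w\,dx\le\lt(\int_B w^s\,dx\rt)^{1/s}|E|^{1/s'}\le\frac{C}{|B|}\,w(B)\,|B|^{1/s}|E|^{1/s'}=C\,w(B)\lt(\frac{|E|}{|B|}\rt)^{1/s'},
$$
which is precisely \eqref{Eqw} with $\delta=1/s'=1-1/s\in(0,1)$ and $C$ depending only on $n$, $p$ and the $A_p$ constant of $w$. The only genuine obstacle is Step~3; Steps~1, 2 and 4 are short applications of H\"older's inequality.
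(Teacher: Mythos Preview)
Your argument is correct and is essentially the standard textbook proof: Step~1 is the usual $A_p$ lower-mass inequality, Step~2 specializes it to get doubling, Step~3 is the classical reverse H\"older self-improvement, and Step~4 is H\"older applied to $w=\chi_E w$. Each computation checks out as written.

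There is nothing to compare against, however: the paper does not give its own proof of this proposition. It is stated with the citation \cite{CMFA} (Grafakos, \emph{Classical and Modern Fourier Analysis}) and no argument is supplied; the result is used as a black box in the proof of Lemma~\ref{Pro} and elsewhere. Your outline is in fact the proof one finds in that reference, so you have simply reconstructed the cited argument.
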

For $\alpha\in(0,n)$, the Riesz potential $I_\alpha$ is defined as follows.
\begin{definition}
Let $0<\alpha<n$, the Riesz potential $I_\alpha$ is defined by
$$
I_{\alpha }f(x):=\frac{1}{\gamma (\alpha )}\int_{\mathbb R^n}\frac{f(\xi)}{|x-\xi|^{n-\alpha}}\,d\xi,
$$
where $\gamma (\alpha )=\pi^{\frac{n}{2}}2^\alpha\Gamma(\frac{\alpha}{2})/\Gamma(\frac{n-\alpha}{2})$.
\end{definition}

\begin{theorem}\label{ThIf}
Let $0<\alpha<n$ and $0<t<\infty$. Suppose that $1<p,q<\frac n\alpha$ such that
$$
\frac{\alpha}{n}=\frac1{p_0}-\frac1p=\frac1{q_0}-\frac1q.
$$
Suppose that $w\in A_{p_0,p}$. \\
(a). If $1<q_0<\infty$, and $v\in A_{q_0,q}$, then we have
$$
||I_\alpha(f)||_{(L^p_w,L^q_v)_t(\mathbb R^n)}\leq C ||f||_{(L_{w}^{p_0},L_{v}^{q_0})_t(\mathbb R^n)}.
$$
(b). If $q_0=1$, and $v\in A_{1,q}$, then we have
$$
||I_\alpha(f)||_{W(L_w^p,L_{v}^q)_t(\mathbb R^n)}\leq  C||f||_{(L_w^{p_0},L_v^1)_t(\mathbb R^n)}.
$$
The universal positive constant $C$ is independent of $f$ and $t$.
\end{theorem}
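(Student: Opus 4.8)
The plan is to deduce all three assertions from the classical Muckenhoupt--Wheeden weighted inequalities for $I_\alpha$ together with an off-diagonal Rubio de Francia extrapolation transplanted onto the amalgam scale, mirroring the treatment of Theorems~\ref{ThMf} and~\ref{ThTf}. The first step is a reduction to $t=1$: since $I_\alpha(f(r\,\cdot))(x)=r^{-\alpha}(I_\alpha f)(rx)$ and the amalgam norms rescale compatibly, while every weight quantity that will occur---$[w]_{A_{p_0,p}}$, $[v]_{A_{q_0,q}}$, the doubling and reverse Hölder constants, and the exponent $\delta$ in \eqref{Eqw}---is invariant under dilations of $\mathbb R^n$, an estimate for $t=1$ whose constant depends only on these data yields the same estimate for every $t>0$ with $C$ independent of $t$.

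For the seed estimates, recall that when $1<p_0$ and $w\in A_{p_0,p}$ (whence $w^{p}\in A_p$ and $w^{p_0}\in A_{p_0}$ by the $A_{p,q}$-theory quoted before the statement) one has $\|I_\alpha f\|_{L^{p}(w^{p})}\le C([w]_{A_{p_0,p}})\,\|f\|_{L^{p_0}(w^{p_0})}$, while for $p_0=1$, $w\in A_{1,p}$ there is the weak-type bound $\|I_\alpha f\|_{L^{p,\infty}(w^{p})}\le C\,\|f\|_{L^{1}(w)}$; the same holds with $(p_0,p,w)$ replaced by $(q_0,q,v)$. To pass to the amalgam estimate I would run the Rubio de Francia iteration in the outer variable. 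Writing $\|g\|_{(L^p_w,L^q_v)_1}=\|\Lambda g\|_{L^q_v}$ with $\Lambda g(x)=\big(w(B(x,1))^{-1}\int_{B(x,1)}|g|^{p}w\big)^{1/p}$, and likewise $\Lambda_0$ with $p_0$ in place of $p$, part (a) is the inequality $\|\Lambda(I_\alpha f)\|_{L^q_v}\le C\|\Lambda_0 f\|_{L^{q_0}_v}$. By duality/linearization of $L^q_v$ (here $v^{q}\in A_q$, and $M$ is bounded on the pertinent weighted Lebesgue spaces---classically, and on the amalgam space by Theorem~\ref{ThMf}), one tests against a nonnegative $h$ and builds an $A_1$-weight $H=\sum_{k\ge0}(2\|M\|)^{-k}M^{k}h$ with constant depending only on that operator norm; this reduces part (a) to a single Muckenhoupt--Wheeden-type inequality $\|I_\alpha f\|_{L^{p}(W_1)}\le C\|f\|_{L^{p_0}(W_0)}$ for weights $W_0,W_1$ assembled from $w$ and $H$, and the crux is to check---using the factorization of $A_{p,q}$-weights, the reverse Hölder inequality, the doubling property, and \eqref{Eqw} (to compare $w(B(x,1))$ on concentric balls)---that the pair $(W_0,W_1)$ is admissible with constant controlled only by $[w]_{A_{p_0,p}}$ and $[v]_{A_{q_0,q}}$. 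Feeding in the seed estimate then closes (a); the case $q=\infty$ is easier, as the outer norm is an essential supremum and the iteration collapses to a ball-by-ball application of the weighted bound.

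Part (b), $q_0=1$, is handled by the weak-type version of the same scheme: linearize $\|I_\alpha f\|_{W(L^p_w,L^1_v)_1}$ by freezing the superlevel set $\{|I_\alpha f|>\lambda\}$, apply the weak $(1,q)$ form of the Rubio de Francia algorithm to the $L^1_v$-factor, and invoke the weak-type Muckenhoupt--Wheeden bound. I expect this endpoint to be the main obstacle: one must arrange the weak-type iteration so that it outputs precisely the weak amalgam norm $W(L^p_w,L^q_v)_1$ with the correctly shifted exponent $q$ rather than a larger space, and reconciling the weak-type Rubio de Francia machinery with both the fractional shift $\alpha/n$ and the $w$-average normalization built into the amalgam norm---while keeping every constant dilation invariant so that the $t$-uniformity survives---is where the argument will need the most care.
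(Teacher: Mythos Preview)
Your route is genuinely different from the paper's, and the step you flag as ``the crux'' is where it falls short. The paper never separates the inner and outer layers or tries to assemble composite weights $W_0,W_1$ from $w$ and a Rubio de Francia iterate. Instead it picks $\theta>1$ with $p/\theta,q/\theta>1$, rewrites $\|I_\alpha f\|_{(L^p_w,L^q_v)_t}=\bigl\||I_\alpha f|^\theta\bigr\|^{1/\theta}_{(L^{p/\theta}_w,L^{q/\theta}_v)_t}$, and dualizes the \emph{entire} amalgam norm via Lemma~\ref{DuiO}. This produces a single Lebesgue integral $\int_{\mathbb R^n}|I_\alpha f|^\theta g\,dx$ with $g$ in the dual amalgam space. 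Dominating $g\le M_\eta(|g|):=[M(|g|^\eta)]^{1/\eta}\in A_1$ (Coifman--Rochberg) puts one directly in the Muckenhoupt--Wheeden setting of Lemma~\ref{ProIf} with the single weight $[M_\eta(|g|)]^{1/\theta}\in A_{\gamma,\theta}$, $1/\gamma=1/\theta+\alpha/n$. The loop is closed by the amalgam H\"older inequality (Lemma~\ref{Holder}) and the boundedness of $M$ on the dual amalgam space (Theorem~\ref{ThMf}); part (b) runs the same template with $\theta=q=n/(n-\alpha)$ and the weak-type bound in Lemma~\ref{ProIf}. Note that the inner weight $w$ never enters the $I_\alpha$ step---it reappears only when Lemma~\ref{Holder} splits the integral.

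Your scheme dualizes only the outer $L^q_v$ norm and then attempts to absorb the local $L^p_w$-average into a weight on $\mathbb R^n$. But after Fubini, the quantity $\int(\Lambda(I_\alpha f))^p\,H$ becomes $\int|I_\alpha f|^p W_1$ with $W_1(y)=w(y)\int_{B(y,1)}H(x)\,w(B(x,1))^{-1}\,dx$; applying Muckenhoupt--Wheeden then forces the right-hand side to carry $W_1^{p_0/p}$, which no longer has the averaging structure needed to unfold into $\|\Lambda_0 f\|_{L^{q_0}_v}$, nor is it clear that $W_1^{1/p}\in A_{p_0,p}$ with constants depending only on the data. The paper's device of dualizing at the amalgam level avoids this obstruction completely, since the only weight in the Muckenhoupt--Wheeden step is the $A_1$ weight built from $g$. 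A minor side point: the reduction to $t=1$ is unnecessary (the paper works with general $t$ throughout), and as you state it is imprecise, since dilating $f$ does not dilate the fixed weights $w,v$; one would have to replace them by $w(\cdot/t),v(\cdot/t)$ and invoke dilation invariance of the $A_{p,q}$ constants.
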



We end this section by explaining some notations. Given a weight $w$ and a measurable set $B$, it can be denoted by
$w(B):=\int_Bw(x)dx$.
For $\alpha>0$ and a ball $B\subset\mathbb R^n$, $\alpha B$ is the ball with same center as $B$
and radius $\alpha$ times radius of $B$. We denote by $B^c:=\mathbb R^n\backslash B$
the complement of $B$.
We write $A\lesssim B$ to mean that there exists a positive constant $C$ such that $A\leq CB$.
$A\approx B$ denotes that $A\lesssim B$ and $B\lesssim A$.
Throughout this paper, the letter $C$ will be used for positive
constants independent of relevant variables that may change from
one occurrence to another.
\section{Some Lemmas}\label{s2}
We begin with some properties of weighted amalgam spaces in this section.
\begin{lemma}\label{DuiO}\cite{Duiou}
Give $1\leq p,q<\infty$,
$$
[(L^p_w,L^q_v)_t(\mathbb R^n)]'=(L^{p'}_{w'},L^{q'}_{v'})_t(\mathbb R^n),
$$
where $\frac{1}{p}+\frac{1}{p'}=\frac{1}{q}+\frac{1}{q'}=1$, $v'=v^{1-p'}$, $w'=w^{1-q'}$,
and as for dual space of the weighted amalgam space, then we know
$$
[(L^p_w,L^q_v)_t(\mathbb R^n)]':=\left\{f:\,\|f\|_{[(L^p_w,L^q_v)_t(\mathbb R^n)]'}
:=\sup_{\|g\|_{(L^p_w,L^q_v)_t(\mathbb R^n)}\leq1}\int_{\mathbb R^n} f(t)g(t)dt\right\}.
$$
\end{lemma}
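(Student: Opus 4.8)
By the definition of the dual norm given in the statement, proving the lemma amounts to showing that the associate (K\"othe) norm $\|g\|':=\sup_{\|f\|_{(L^p_w,L^q_v)_t}\le1}\int_{\mathbb R^n}fg\,dx$ of the space $X:=(L^p_w,L^q_v)_t$ is comparable to $\|g\|_{(L^{p'}_{w'},L^{q'}_{v'})_t}$, where $w'$ and $v'$ are the conjugate weights produced by weighted $L^p$- and $L^q$-duality. I would establish the two matching inequalities separately, the underlying pairing being the unweighted $\int_{\mathbb R^n}fg\,dx$ throughout.

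For the inequality $\|g\|'\lesssim\|g\|_{(L^{p'}_{w'},L^{q'}_{v'})_t}$ I would dualize layer by layer. Fixing a ball $B(x,t)$ and applying H\"older's inequality in the inner variable against the measure $w(y)\,dy$ with exponents $(p,p')$ extracts the factor $N_x(f):=\big(\frac1{w(B(x,t))}\int_{B(x,t)}|f|^pw\big)^{1/p}$ and leaves a weighted $L^{p'}$-average of $g$ against the inner dual weight $w'$. I would then convert the pointwise pairing $\int fg$ into these ball-averaged quantities by Fubini's theorem, using the symmetry $\chi_{B(x,t)}(y)=\chi_{B(y,t)}(x)$; this is the point at which the weighted normalization enters, and it is treated below. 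A final application of H\"older's inequality in the outer variable with exponents $(q,q')$ against $v\,dx$ separates $\|N_\cdot(f)\|_{L^q_v}=\|f\|_{(L^p_w,L^q_v)_t}$ from the corresponding $(L^{p'}_{w'},L^{q'}_{v'})_t$-average of $g$. The outer step is plain weighted-$L^q$ duality and requires no condition on $v$; only the inner step interacts with the weight structure.

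The main obstacle, and the only genuine departure from the unweighted slice-space case, is the normalizing factor $1/w(B(x,t))$ sitting inside the inner average. When $w\equiv1$ this factor is the constant $1/|B(x,t)|$ and the Fubini step rests on the exact identity $\int_{\mathbb R^n}\frac{\chi_{B(x,t)}(y)}{|B(x,t)|}\,dx=1$; for a general weight the analogous averaging kernel $\int_{B(y,t)}\frac{dx}{w(B(x,t))}$ is only \emph{comparable} to $\frac{|B(y,t)|}{w(B(y,t))}$, which I would deduce from the doubling of $w$ (Proposition \ref{Eqweight}(i)) since $B(x,t)$ and $B(y,t)$ are comparable whenever $|x-y|<t$. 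Reconciling the two normalizations then reduces to the equivalence $\big(\frac1{|B|}\int_Bw'\big)\big(\frac1{|B|}\int_Bw\big)^{p'-1}\approx1$ over balls $B$ of radius $t$, whose upper bound is precisely the $A_{p'}$ condition for the inner dual weight $w'$ (equivalently $w\in A_p$) and whose lower bound is H\"older's inequality. This is the step where the standing hypothesis $w\in A_p$ is essential.

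For the reverse inequality $\|g\|'\gtrsim\|g\|_{(L^{p'}_{w'},L^{q'}_{v'})_t}$ I would test the functional against an explicit competitor designed to saturate both H\"older steps, namely an $f$ assembled from suitable powers of $g$ and of its inner and outer local averages; evaluating $\int fg$ and $\|f\|_X$ by the same Fubini-and-doubling computation as above yields matching bounds and hence the lower estimate. Combining the two inequalities gives $\|g\|'\approx\|g\|_{(L^{p'}_{w'},L^{q'}_{v'})_t}$, which is the asserted identification of the dual. The endpoint indices $p=1$ or $q=1$, where a conjugate exponent degenerates to $\infty$, are handled by the usual modification replacing the corresponding average by an essential supremum.
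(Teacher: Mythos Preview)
The paper does not prove this lemma at all: it is stated with the citation \cite{Duiou} (Heil's thesis on classical Wiener amalgam spaces) and no argument follows. There is therefore no ``paper's proof'' to compare your proposal against.

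Your outline is a reasonable sketch of how such a duality is established for slice-type spaces, and you correctly isolate the only nontrivial point: converting between the normalizations $1/w(B(x,t))$ and $1/w'(B(x,t))$ forces the comparability $w(B)^{1/p}\,w'(B)^{1/p'}\approx|B|$, which is exactly the two-sided $A_p$ condition on the inner weight. Two remarks are worth recording. First, the lemma as stated in the paper carries no hypothesis on $w$ or $v$, yet your argument genuinely needs $w\in A_p$ (doubling alone does not give the upper comparability); this is consistent with the standing assumptions used elsewhere in the paper, but it is an unstated hypothesis here. Second, the paper's statement appears to contain a typographical swap, writing $v'=v^{1-p'}$ and $w'=w^{1-q'}$; the natural dual weights, and the ones your argument actually produces, are $w'=w^{1-p'}$ (from inner $L^p_w$ duality) and $v'=v^{1-q'}$ (from outer $L^q_v$ duality). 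Your proposal silently uses the correct assignment.
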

\begin{lemma}\label{Holder}\cite{Duiou}
Given $1\leq p,q\leq\infty$,
$$
\|fg\|_{L^1(\mathbb R^n)}\leq\|f\|_{(L^p_w,L^q_v)_t(\mathbb R^n)}\|g\|_{(L^{p'}_{w'},L^{q'}_{v'})_t(\mathbb R^n)},
$$
where $\frac{1}{p}+\frac{1}{p'}=\frac{1}{q}+\frac{1}{q'}=1$, $v'=v^{1-p'}$, $w'=w^{1-q'}$.
\end{lemma}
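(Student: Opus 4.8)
The plan is to deduce the Hölder inequality directly from the duality identity of Lemma~\ref{DuiO}, which is already available to us and which carries \emph{exactly} the same pairing of weights ($v'=v^{1-p'}$, $w'=w^{1-q'}$) as the statement to be proved. Since the amalgam norm of Definition~\ref{Defama} depends on $f$ only through $|f|$, I would first reduce the estimate of $\int_{\mathbb R^n}|fg|$ to a bilinear pairing: for given $f,g$ choose $f_0$ with $|f_0|=|f|$ and $f_0 g=|fg|$ pointwise (for real-valued functions $f_0=|f|\,\mathrm{sgn}(g)$ works), so that $\int_{\mathbb R^n}|fg|=\int_{\mathbb R^n}f_0 g$ while $\|f_0\|_{(L^p_w,L^q_v)_t}=\|f\|_{(L^p_w,L^q_v)_t}$ by modulus invariance of the norm.

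Next I would invoke Lemma~\ref{DuiO}: it identifies $(L^{p'}_{w'},L^{q'}_{v'})_t$ as the associate space of $(L^p_w,L^q_v)_t$ and realises its norm as $\|g\|_{(L^{p'}_{w'},L^{q'}_{v'})_t}=\sup\{\int_{\mathbb R^n} g h:\ \|h\|_{(L^p_w,L^q_v)_t}\le 1\}$. Applying this with $h=f_0/\|f_0\|_{(L^p_w,L^q_v)_t}$ (the case $f=0$ being trivial) gives $\int_{\mathbb R^n} f_0 g\le \|f_0\|_{(L^p_w,L^q_v)_t}\,\|g\|_{(L^{p'}_{w'},L^{q'}_{v'})_t}$, and combined with the reduction above this yields precisely $\|fg\|_{L^1}\le\|f\|_{(L^p_w,L^q_v)_t}\,\|g\|_{(L^{p'}_{w'},L^{q'}_{v'})_t}$. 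This route is short exactly because the analytic content has been packaged into Lemma~\ref{DuiO}.

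For a self-contained argument (and to cover the endpoints $q=\infty$ or $p=\infty$ not addressed by Lemma~\ref{DuiO}) I would instead use a Fubini-averaging step followed by two ordinary Hölder inequalities. Writing $V_t:=|B(x,t)|$, which is independent of $x$, the identity $\int_{\mathbb R^n}\chi_{B(x,t)}(y)\,dx=V_t$ gives $\int_{\mathbb R^n}|fg|\,dy=\frac1{V_t}\int_{\mathbb R^n}\bigl(\int_{B(x,t)}|f(y)g(y)|\,dy\bigr)\,dx$. On each ball $B(x,t)$ I would apply Hölder with exponents $p,p'$ and the splitting $|f|w^{1/p}\cdot|g|w^{-1/p}$, which produces the local $L^p_w$ integral of $f$ together with the local $L^{p'}$ integral of $g$ against the dual weight $w^{1-p'}$; then I would apply Hölder in the center variable $x$ with exponents $q,q'$ and the splitting $v^{1/q}\cdot v^{-1/q}$ to recognise $\|f\|_{(L^p_w,L^q_v)_t}$ and the outer $L^{q'}$ integral of $g$ against $v^{1-q'}$.

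The step I expect to be the main obstacle is controlling the leftover normalization factor $\frac1{V_t}\,w(B(x,t))^{1/p}\,(w^{1-p'})(B(x,t))^{1/p'}$ that appears when the un-normalized local integrals are rewritten in terms of the normalized averages of Definition~\ref{Defama}. This factor is exactly where the Muckenhoupt structure enters: by Hölder's inequality it is always $\ge 1$, and the $A_p$ condition on $w$ bounds it from above by $[w]_{A_p}^{1/p}$, uniformly in $x$ and $t$, so it is harmless and keeps the constant independent of $t$. This elementary bound is the constructive counterpart of the nontrivial direction of the duality asserted in Lemma~\ref{DuiO}, and it is precisely what makes the two approaches consistent.
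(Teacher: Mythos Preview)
The paper does not supply a proof of this lemma; it is quoted from \cite{Duiou} with no argument given, so there is nothing in the paper to compare your proposal against.

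Your first route (deducing H\"older from the associate-space description in Lemma~\ref{DuiO}) is the standard and correct reduction: once the dual norm is realised as a supremum of pairings, the inequality is immediate after the modulus-invariance step you describe. The only caveat is that Lemma~\ref{DuiO} is stated for $1\le p,q<\infty$, so, as you already observe, the endpoint cases must be handled separately, and the inequality you obtain inherits whatever implicit constant accompanies the norm identification in Lemma~\ref{DuiO}.

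Your second, self-contained route is also correct, but note what it actually produces. The inner H\"older with the splitting $|f|w^{1/p}\cdot|g|w^{-1/p}$ forces the inner dual weight to be $w^{-p'/p}=w^{1-p'}$, and the outer H\"older with $v^{1/q}\cdot v^{-1/q}$ forces the outer dual weight to be $v^{1-q'}$. This is the natural pairing, but it does \emph{not} coincide with the exponents written in the paper's statement ($v'=v^{1-p'}$, $w'=w^{1-q'}$), which are evidently transposed; the same transposition appears in Lemma~\ref{DuiO}. Your direct computation therefore proves the correct inequality and exposes the typo rather than the statement as printed. Finally, the leftover factor you identify, $\frac{1}{|B(x,t)|}\,w(B(x,t))^{1/p}(w^{1-p'}(B(x,t)))^{1/p'}$, is indeed bounded above by $[w]_{A_p}^{1/p}$ when $w\in A_p$, so your argument yields the inequality with a constant depending on $[w]_{A_p}$ rather than with constant~$1$; for all the applications in Sections~\ref{s3}--\ref{s4} this is entirely sufficient.
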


\begin{lemma}\label{waicha}\cite{refwcha}
Let $\mathcal F$ be a given family of pairs $(f,g)$ of non-negative and not identically zero
measurable functions on $\mathbb R^n$.
Suppose that for some fixed exponent $p_0\in[1, \infty)$,
and every weight $\omega\in A_{p_0}$,
$$
\int_{\mathbb R^n}f(x)^{p_0}\omega(x)dx\leq C_{\omega,p_0}\int_{\mathbb R^n}g(x)^{p_0}\omega(x)dx,
\quad\forall (f,g)\in \mathcal F.
$$
Then, for all $p\in (1,\infty)$ and for all $\omega \in A_p$,
$$
\int_{\mathbb R^n}f(x)^p\omega(x)dx\leq C_{\omega,p}\int_{\mathbb R^n}g(x)^p\omega(x)dx,
\quad\forall (f,g)\in \mathcal F.
$$
\end{lemma}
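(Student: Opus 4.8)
This is Rubio de Francia's extrapolation theorem, and the plan is to follow the classical iteration proof. One may assume $\int g^{p}\omega\,dx<\infty$ and, after approximating $f$ by $\min\{f,N\}\chi_{B(0,R)}$ and letting $N,R\to\infty$, that $f\in L^{p}_{\omega}(\mathbb R^{n})$; one may also reduce, by a standard argument, to the case where the constant in the hypothesis depends on $\omega$ only through a non-decreasing function of its $A_{p_{0}}$-constant. The proof then splits into $p=p_{0}$ (which is the hypothesis itself), $p>p_{0}$, and $1<p<p_{0}$. In the last two the scheme is identical: (i) use weighted H\"older's inequality (duality) to bound the relevant power of $\|f\|_{L^{p}_{\omega}}$ by $\int_{\mathbb R^{n}}f^{p_{0}}v\,dx$ for a carefully built weight $v$; (ii) apply the hypothesis, which is legitimate once $v\in A_{p_{0}}$ with a constant controlled by the $A_{p}$-constant of $\omega$; (iii) use H\"older's inequality once more to bound $\int_{\mathbb R^{n}}g^{p_{0}}v\,dx$ by a power of $\|g\|_{L^{p}_{\omega}}$, then rearrange. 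The engine producing $v$ is the Rubio de Francia operator: on any $L^{s}_{\mu}$ on which the maximal operator $M$ is bounded, $\mathcal R\phi:=\sum_{k\ge0}2^{-k}\|M\|_{L^{s}_{\mu}\to L^{s}_{\mu}}^{-k}M^{k}\phi$ satisfies $\phi\le\mathcal R\phi$, $\|\mathcal R\phi\|_{L^{s}_{\mu}}\le2\|\phi\|_{L^{s}_{\mu}}$ and $M(\mathcal R\phi)\le2\|M\|_{L^{s}_{\mu}\to L^{s}_{\mu}}\mathcal R\phi$, so $\mathcal R\phi$ is an $A_{1}$ weight with constant independent of $\phi$.

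For $p>p_{0}$ I would set $\sigma:=\omega^{1-p'}\in A_{p'}$, so that $\|f\|_{L^{p}_{\omega}}=\sup\{\int f h\,dx:\ h\ge0,\ \|h\|_{L^{p'}_{\sigma}}\le1\}$, fix such an $h$, apply $\mathcal R$ on $L^{p'}_{\sigma}$ (legitimate since $\sigma\in A_{p'}$), and take $v:=(\mathcal R h)^{\beta}\sigma^{1-p_{0}}$ with $\beta:=\frac{p-p_{0}}{p-1}\in(0,1]$. Using the Jones factorization $\sigma=\sigma_{0}\sigma_{1}^{1-p'}$ ($\sigma_{0},\sigma_{1}\in A_{1}$), an elementary exponent check rewrites $v=\big((\mathcal R h)^{\beta}\sigma_{1}^{1-\beta}\big)\sigma_{0}^{1-p_{0}}=V_{0}V_{1}^{1-p_{0}}$ with $V_{0},V_{1}\in A_{1}$, so $v\in A_{p_{0}}$ with controlled constant. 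Then H\"older gives $\int f h\,dx\le(\int f^{p_{0}}v\,dx)^{1/p_{0}}(\int h^{p_{0}'}v^{1-p_{0}'}\,dx)^{1/p_{0}'}$; since $v^{1-p_{0}'}=(\mathcal R h)^{p'-p_{0}'}\sigma\le h^{p'-p_{0}'}\sigma$ (because $p'-p_{0}'<0$ and $\mathcal R h\ge h$), the second factor is at most $(\int h^{p'}\sigma\,dx)^{1/p_{0}'}\le1$. The hypothesis bounds $\int f^{p_{0}}v\,dx$ by $C\int g^{p_{0}}v\,dx$, and one further H\"older step --- in which the error integrand reduces, after an exponent identity, to $(\mathcal R h)^{p'}\sigma$ with $\int(\mathcal R h)^{p'}\sigma\,dx\le2^{p'}$ --- gives $\int g^{p_{0}}v\,dx\le C\|g\|_{L^{p}_{\omega}}^{p_{0}}$. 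Taking the supremum over $h$ closes this case.

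For $1<p<p_{0}$ I would instead apply $\mathcal R$ to $g$ on $L^{p}_{\omega}$ (legitimate since $\omega\in A_{p}$) and take $v:=(\mathcal R g)^{p-p_{0}}\omega$; since $p-p_{0}<0$ and $\mathcal R g\ge g$, we get $v\le g^{p-p_{0}}\omega$ and hence $\int g^{p_{0}}v\,dx\le\int g^{p}\omega\,dx$, while the Jones factorization $\omega=\omega_{0}\omega_{1}^{1-p}$ ($\omega_{0},\omega_{1}\in A_{1}$) puts $v$ in $A_{p_{0}}$ with controlled constant, namely $v=\omega_{0}\big((\mathcal R g)^{(p_{0}-p)/(p_{0}-1)}\omega_{1}^{(p-1)/(p_{0}-1)}\big)^{1-p_{0}}$. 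H\"older with exponent $p_{0}/p$ then gives $\int f^{p}\omega\,dx\le(\int f^{p_{0}}v\,dx)^{p/p_{0}}(\int(\mathcal R g)^{p}\omega\,dx)^{1-p/p_{0}}$, and applying the hypothesis to the first factor and $\|\mathcal R g\|_{L^{p}_{\omega}}\le2\|g\|_{L^{p}_{\omega}}$ to the second yields $\int f^{p}\omega\,dx\le C\|g\|_{L^{p}_{\omega}}^{p}$. The hardest part throughout will be step (ii): checking that the manufactured weight $v$ lands in $A_{p_{0}}$ with a constant depending only on $p,p_{0},n$ and the $A_{p}$-constant of $\omega$ --- not on $f$ or $g$. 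This is precisely what the self-improving ($A_{1}$) property of $\mathcal R$, together with the Jones factorization of $A_{q}$ weights (or, alternatively, the Coifman--Rochberg description of $A_{1}$ weights), are needed for, and it is the only point where the full strength of the hypothesis --- its validity for \emph{every} weight in $A_{p_{0}}$ --- is used.
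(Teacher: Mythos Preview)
The paper does not supply its own proof of this lemma: it is quoted verbatim from \cite{refwcha} (Cruz-Uribe, Martell, P\'erez) and used as a black box in the proof of Lemma~\ref{Pro}. Your proposal is precisely the Rubio de Francia iteration argument that appears in that reference, and the exponent bookkeeping in both cases ($p>p_{0}$ via duality on $L^{p'}_{\sigma}$, and $1<p<p_{0}$ via $\mathcal R g$ on $L^{p}_{\omega}$, with Jones factorization to verify $v\in A_{p_{0}}$) checks out. So there is nothing to compare against here; your write-up is a faithful sketch of the cited proof.
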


\begin{lemma}\label{Pro}
Let $t\in(0,\infty)$, $r\in[1,\infty)$ and $1<\alpha,p<\infty$. Given $w\in A_p$,
$v\in A_q$, and $q\in [1,rp]$, there exist a positive constant $C$
such that
\begin{align*}
C^{-1}||f||_{(L_{w}^{p},L_{v}^{q})_t(\mathbb R^n)}\leq ||f||_{(L_{w}^{p},L_{v}^{q})_{\alpha t}(\mathbb R^n)}
\leq C\alpha ^{np}||f||_{(L_{w}^{p},L_{v}^{q})_t(\mathbb R^n)}.
\end{align*}
Where $C$ is independent of $f,t,\alpha$.
\end{lemma}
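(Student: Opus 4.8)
The plan is to prove the two inequalities separately; throughout write
$F_s(x):=\bigl(\frac{1}{w(B(x,s))}\int_{B(x,s)}|f(y)|^pw(y)\,dy\bigr)^{1/p}$, so that
$\|f\|_{(L_w^p,L_v^q)_s(\mathbb R^n)}=\|F_s\|_{L^q_v(\mathbb R^n)}$, and we will only invoke $[w]_{A_p}$, $[v]_{A_q}$ and $n,p,q,r$, so that the resulting $C$ is automatically independent of $f$ and $t$. The left-hand inequality is immediate from $B(x,t)\subset B(x,\alpha t)$ (recall $\alpha>1$): this gives $\int_{B(x,t)}|f|^pw\leq\int_{B(x,\alpha t)}|f|^pw$, hence $F_t(x)^p\leq\frac{w(B(x,\alpha t))}{w(B(x,t))}F_{\alpha t}(x)^p$, and the $A_p$ condition for $w$ — in the quantitative form $w(B')\leq[w]_{A_p}(|B'|/|E|)^pw(E)$ for balls $B'\supset E$, the one-line consequence of H\"older's inequality that underlies the doubling property recorded in Proposition \ref{Eqweight} — bounds $w(B(x,\alpha t))/w(B(x,t))$ by $[w]_{A_p}\alpha^{np}$; thus $F_t(x)\leq C\alpha^{n}F_{\alpha t}(x)$ pointwise, and taking $L^q_v$-norms yields the first inequality (for a fixed dilation ratio $\alpha$ the factor $\alpha^n$ is absorbed into $C$).

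For the right-hand inequality the key is a Fubini/self-improvement identity that turns the average of $|f|^p$ over the big ball $B(x,\alpha t)$ into a genuine average of $z\mapsto F_t(z)^p$ over a ball of radius $\approx\alpha t$, so that no ball-centre bookkeeping is needed. Insert $1=\frac{1}{|B(0,t)|}\int_{\mathbb R^n}\chi_{B(z,t)}(y)\,dz$ under $\int_{B(x,\alpha t)}|f(y)|^pw(y)\,dy$, interchange the order of integration (using $\chi_{B(y,t)}(z)=\chi_{B(z,t)}(y)$), bound the inner integral by $\int_{B(z,t)}|f|^pw=w(B(z,t))F_t(z)^p$, note the $z$-integral is supported in $B(x,(\alpha+1)t)$ where $w(B(z,t))\leq C\,w(B(x,\alpha t))$ by the doubling of $w$; this produces
$$F_{\alpha t}(x)^p\ \leq\ C\,\frac{|B(x,(\alpha+1)t)|}{|B(0,t)|}\cdot\frac{1}{|B(x,(\alpha+1)t)|}\int_{B(x,(\alpha+1)t)}F_t(z)^p\,dz\ \leq\ C\alpha^{n}\,M(F_t^p)(x),$$
$M$ being the centred Hardy--Littlewood maximal operator (a blunt covering of $B(x,\alpha t)$ by $\approx\alpha^n$ balls of radius $t$, with the weights controlled by the $A_p$ estimate for $w$, gives the same conclusion with the explicit exponent $\alpha^{np}$ that appears in the statement). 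Now take $L^q_v$-norms: $\|F_{\alpha t}\|_{L^q_v}\leq C\alpha^{n/p}\|M(F_t^p)\|_{L^{q/p}_v}^{1/p}$. If $q>p$, then $q/p\in(1,r]$ and the boundedness of $M$ on $L^{q/p}_v$ — for which the hypotheses on $v$, $q$, $r$ are precisely what is required — gives $\|M(F_t^p)\|_{L^{q/p}_v}\lesssim\|F_t^p\|_{L^{q/p}_v}=\|F_t\|_{L^q_v}^p$, whence $\|f\|_{(L_w^p,L_v^q)_{\alpha t}}\lesssim\alpha^{n/p}\|f\|_{(L_w^p,L_v^q)_t}\leq C\alpha^{np}\|f\|_{(L_w^p,L_v^q)_t}$. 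If $q\leq p$, then $q/p\leq1$ and $M$ is no longer bounded on $L^{q/p}_v$; instead one discretizes the outer integral over a grid of mesh $\sim t$, writes $\bigl(\frac{1}{w(Q')}\int_{Q'}|f|^pw\bigr)^{q/p}=\bigl(\sum_{Q\subset Q'}\frac{w(Q)}{w(Q')}a_Q\bigr)^{q/p}$ with $a_Q=\frac{1}{w(Q)}\int_Q|f|^pw$, and uses the subadditivity of $\sigma\mapsto\sigma^{q/p}$ together with the doubling of $v$ to pass from the mesh-$\alpha t$ sum to the mesh-$t$ sum with no loss in $\alpha$.

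The step I expect to be the genuine obstacle is the passage from the pointwise domination $F_{\alpha t}^p\lesssim\alpha^n M(F_t^p)$ to the $L^q_v$-bound: one needs a maximal operator that simultaneously dominates the large-ball averages of $F_t^p$ and is bounded on the ambient space, and the regimes $q>p$ (leaning on a weighted $L^{q/p}$ maximal inequality) and $q\leq p$ (leaning instead on subadditivity and the doubling of $v$) have to be handled by different arguments; once this is arranged, every constant depends only on $n,p,q,r,[w]_{A_p},[v]_{A_q}$, so it is independent of $f$ and $t$ as claimed, and the two displayed inequalities exhibit the $\alpha$-dependence explicitly.
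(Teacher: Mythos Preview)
Your pointwise bound $F_{\alpha t}(x)^p\lesssim \alpha^n\,M(F_t^p)(x)$ is correct and is essentially the Fubini identity the paper also uses in its first step. The gap is in the passage to the $L^q_v$-norm when $q>p$. You invoke the boundedness of $M$ on $L^{q/p}_v$, but that requires $v\in A_{q/p}$; the hypothesis only gives $v\in A_q$, and since $p>1$ one has $q/p<q$, so $A_{q/p}\subsetneq A_q$ and the inclusion goes the wrong way. A weight in $A_q$ need not lie in $A_{q/p}$, so ``the hypotheses on $v$, $q$, $r$ are precisely what is required'' is false here. This is exactly why the paper does not argue pointwise in this range: after establishing the case $q=p$ by Fubini, it feeds the pairs $(F_{\alpha t}^{p/p_0},F_t^{p/p_0})$ into Rubio de Francia extrapolation (Lemma~\ref{waicha}). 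Because the $q=p$ inequality holds for \emph{every} $v\in A_{p_0}$ (only doubling of $v$ is used), extrapolation manufactures the inequality at the exponent $q/p$ for $v$ in the appropriate $A$-class without ever needing $v\in A_{q/p}$. That extrapolation step is the idea your argument is missing.

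For $q\le p$ your discretization/subadditivity route is a valid alternative to the paper's Step~3 density argument, but your claim of ``no loss in $\alpha$'' is not right: bounding $v(Q')$ by $v(Q)$ via $A_q$-doubling costs a factor $\alpha^{nq}$. Since $q\le p$ in this regime, $\alpha^{nq}\le\alpha^{np}$ and the stated bound still follows, but the loss is there. Finally, your left-hand inequality yields $F_t\le C\alpha^n F_{\alpha t}$ and you then ``absorb $\alpha^n$ into $C$''; since the lemma asserts $C$ independent of $\alpha$, this does not prove the stated left inequality. The paper does not prove it either, and in fact a check with $w=v\equiv1$, $f=\chi_{B(0,t)}$ shows $\|f\|_t/\|f\|_{\alpha t}\approx\alpha^{n(1/p-1/q)}\to\infty$ when $q>p$, so an $\alpha$-free left bound cannot hold in general; only the right-hand inequality is actually used downstream.
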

\begin{proof}[\bf{Proof of Lemma \ref{Pro}}]
Similar to the proof of \cite[Theorem 3.6]{W}, we only prove the right-hand of the above inequality.
We split it to three steps. We first obtain the case $q = p$ and $1\leq r< \infty$.
From this, we extrapolate concluding the desired estimate in the ranges $1\leq q \le rp$ and
$1< r< \infty$. Finally, we shall consider the case $r=1$ and $1\leq q < p$.
From now on, we fix $\alpha>1$. For the first step, let $q=p$ and $1\leq r<\infty$.
By $\omega\in A_p$ and (\ref{Eqw}), we conclude that
\begin{equation}\label{Eqa}
\begin{split}
\int_{\mathbb R^n}\frac{1}{w(B(x,\alpha t))}\int_{B(x,\alpha t)}|f(y)|^pw(y)dyv(x)dx
    &=\frac{1}{w(B(x,\alpha t))}\int_{\mathbb R^n}|f(y)|^pw(y)v(B(y,\alpha t)dy\\
    &\leq \frac{\alpha ^{np}}{w(B(x,\alpha t))}\int_{\mathbb R^n}|f(y)|^pw(y)v(B(y, t))dy\\
    &\leq \alpha ^{np} \int_{\mathbb R^n}\frac{1}{w(B(x,t))}\int_{B(x,t)}|f(y)|^pw(y)dyv(x)dx.
\end{split}
\end{equation}
To prove the second step, take an arbitrary $p_0\in[1,\infty)$ and consider $\mathcal F$ the family of pairs
$$
(f,g):=\lt(\lt[\frac{1}{w(B(x,\alpha t))}\int_{B(x,\alpha t)}|f(y)|^pw(y)dy\rt]^\frac{1}{p_0},
\lt[\frac{1}{w(B(x,t))}\int_{B(x,t)}|f(y)|^pw(y)dy\rt]^\frac{1}{p_0}\rt).
$$
Then, for any $v\in A_{p_0}$, (\ref{Eqa}) gives
\begin{align*}
\int_{\mathbb R^n}f(x)^{p_0}v(x)dx&=\int_{\mathbb R^n}\frac{1}{w(B(x,\alpha t))}
    \int_{B(x,\alpha t)}|f(y)|^pw(y)dyv(x)dx\\
    &\le \alpha ^{np_0} \int_{\mathbb R^n}\frac{1}{w(B(x,t))}\int_{B(x,t)}|f(y)|^pw(y)dyv(x)dx\\
    &\le \alpha ^{np_0}\int_{\mathbb R^n}g(x)^{p_0}v(x)dx.
\end{align*}
Applying Lemma \ref{waicha}, then, for any given $1 < r < \infty$ and $v\in A_r$,
$$
\int_{\mathbb R^n}\lt[\frac{1}{w(B(x,\alpha t))}\int_{B(x,\alpha t)}|f(y)|^pw(y)dy
    \rt]^{\frac{r}{p_0}}v(x)dx
    \leq C\alpha ^{nr}\int_{\mathbb R^n}\lt[\frac{1}{w(B(x,t))}
    \int_{B(x,t)}|f(y)|^pw(y)dy\rt]^{\frac{r}{p_0}}v(x)dx.
$$
From this and letting $p_0 =\frac{pr}{q}$, we prove Lemma \ref{Pro} under the restriction $r\in (1,\infty)$,
that is, whenever $r\in (1,\infty)$, $v\in A_r$ and $q\in [1, rp]$,
there exists a positive constant $C$ such that the inequality
$$
\int_{\mathbb R^n}\lt[\frac{1}{w(B(x,\alpha t))}\int_{B(x,\alpha t)}|f(y)|^pw(y)dy
    \rt]^{\frac{q}{p}}v(x)dx
    \leq C\alpha ^{nr}\int_{\mathbb R^n}\lt[\frac{1}{w(B(x,t))}
    \int_{B(x,t)}|f(y)|^pw(y)dy\rt]^{\frac{q}{p}}v(x)dx.
$$
holds true.
\par Then, we consider the case $r = 1$ and $1\leq q < p$. Without loss
of generality, we assume that
$$
\int_{\mathbb R^n}\lt[\frac{1}{w(B(x,t))}\int_{B(x,t)}|f(y)|^pw(y)dy\rt]^{\frac{q}{p}}v(x)dx<\infty.
$$
For a fixed $\lambda> 0$, set
$$
E_{\lambda}:=\lt\{ x\in\mathbb R^n:\lt[\frac{1}{w(B(x,t))}\int_{B(x,t)}|f(y)|^pw(y)dy\rt]^\frac1p\le\lambda\rt\}
$$
and
$$
O_{\lambda}:=\mathbb R^n\backslash E_{\lambda}=\lt\{ x\in\mathbb R^n:\lt[\frac{1}{w(B(x,t))}
\int_{B(x,t)}|f(y)|^pw(y)dy\rt]^\frac1p> \lambda\rt\}.
$$
Then, for each $0 <\gamma< 1$, we also consider the set of global $\gamma$-density with respect to $E_\gamma$ defined
by
$$
E^*_\lambda:=\left\{ x\in \mathbb R^n:\frac{|E_\lambda\cap B|}{|B|}\ge\lambda,
\forall B~ centered~ at~ x\right\}
$$
and denote its complement by
$$
O^*_\lambda:=\left\{ x\in \mathbb R^n: \exists r>0~s.~t.~\frac{|O_\lambda\cap B(x,r)|}{|B(x,r)|}> 1- \gamma\right\}
=\left\{x\in\mathbb R^n: M(\chi_{O_{\lambda}})(x)> 1-\gamma \right\}.
$$
By \cite{MP}, it can prove that $E^*_\lambda$ is closed and $\varnothing \not\subseteq E^*_\lambda\subset E_\lambda$.
\par Denote by $\Re(E^*_\lambda):=\underset{x\in E^*_\lambda}{\mathop{\cup }}\,\{{y\in\mathbb R^n:|y-x|<\alpha t}\}$.
For any $y\in\Re(E^*_\lambda)$, there exists a $\bar{x}\in E^*_\lambda$ such that $|\bar{x}-y|<\alpha t$.
Let $z=y-\frac{t}{2}\frac{y-\bar{x}}{|\bar{x}-y|}$. Then $B(z,\frac{t}{2})\subset B(\bar{x},\alpha t)\bigcap B(y,t)$
and
$$
|B(\bar x,\alpha t)\backslash B(y,t)|\leq \lt|B(\bar{x},\alpha t)\backslash B\lt(z,\frac{t}{2}\rt)\rt|
=|B(\bar{x},\alpha t)|-\lt|B\lt(z,\frac{t}{2}\rt)\rt|=\left(1-\frac{1}{2^n\alpha^n}\right)|B(\bar{x},\alpha t)|.
$$
As for $\bar{x}\in E^*_\lambda$. Then we obtain that
\begin{align*}
\gamma|B(\bar{x},\alpha t)|\leq |E_\lambda \cap B(\bar{x},\alpha t)|
    &=|E_\lambda \cap B(\bar{x},\alpha t)\backslash B(y,t)|+|E_\lambda \cap B(\bar{x},\alpha t)\cap B(y,t)|\\
    &\leq \left(1-\frac{1}{2^n\alpha^n}\right)|B(\bar{x},\alpha t)|+|E_\lambda \cap B(y,t)|.
\end{align*}
Choosing $\gamma =1-\frac{1}{2^{n+1}\cdot\alpha^n}$ yields
\begin{equation}\label{E}
|E_\lambda \cap B(y,t)|\ge \frac{1}{2^{n+1}\cdot\alpha^n}|B(\bar{x},\alpha t)|,
\end{equation}
which, together with (\ref{Eqw}) by Proposition \ref{Eqweight}, further implies that,
for any $y\in\Re(E^*_\lambda)$,
\begin{equation}\label{weightE}
\frac{v(E_\lambda\cap B(y,t))}{v(B(y,\alpha t))}\ge C\frac{|E_\lambda\cap B(y,t)|}{|B(\bar{x},\alpha t)|}
\ge C\frac{1}{2^{n+1}\alpha^n}.
\end{equation}
It follows from (\ref{weightE}) that
\begin{equation}\label{F}
\begin{split}
\int_{E^*_\lambda}\frac1{w(\alpha B)}\int_{B(x,t)}|f(y)|^pw(y)dyv(x)dx
    &=\frac{1}{w(B(x,\alpha t))}\int_{E^*_\lambda}\int_{\mathbb R^n}|f(y)|^p\chi_{B(0,1)}
    (\frac{y-x}{\alpha t})v(x)w(y)dydx\\
    &=\frac{1}{w(B(x,\alpha t))}\int_{\Re(E^*_\lambda)}|f(y)|^pw(y)\int_{B(y,\alpha t)}v(x)dxdy\\
    &\leq C_v\frac{2^{n+1}\alpha^n}{w(B(x,\alpha t))}\int_{\Re(E^*_\lambda)}
|f(y)|^pw(y)\int_{B(y,t)\cap E_\lambda} v(x)dxdy\\
    &\leq C_v2^{n+1}\alpha^n\int_{E_\lambda}\frac{1}{w(B(x,t))}\int_{B(x,t)}|f(y)|^pw(y)dyv(x)dx.
\end{split}
\end{equation}
Since $v\in A_1$,
by Lemma \ref{ProMf}, \eqref{E}, \eqref{F} and the definition of $O^*_\lambda$, then
\begin{align*}
    v&\lt(\lt\{x\in\mathbb R^n:\lt[\frac{1}{w(B(x,\alpha t))}\int_{B(x,t)}|f(y)|^pw(y)dy
\rt]^{\frac{1}{p}}>\lambda \rt\}\rt)\\
    &\leq v\left(\left\{x\in O^*_\lambda:\left[\frac{1}{w(B(x,\alpha t))}\int_{B(x,t)}|f(y)|^pw(y)dy
\right]^{\frac{1}{p}}>\lambda \right\}\right)\\
    &\quad+\left(\left\{x\in E^*_\lambda:\left[\frac{1}{w(B(x,\alpha t))}\int_{B(x,t)}|f(y)|^pw(y)dy
\right]^{\frac{1}{p}}>\lambda \right\}\right)\\
    &\leq v (\left\{x\in\mathbb R^n: M(\chi_{O_{\lambda}})(x)> 1-\gamma\right\})
+\frac{1}{\lambda^p}\int_{E^*_\lambda} \frac{1}{w(B(x,\alpha t))}\int_{B(x,t)}|f(y)|^pw(y)dyv(x)dx\\
    &\leq C_{n,v}\left[v(O_\lambda)+\frac{1}{\lambda^p}\int_{E_\lambda}
\frac{1}{w(B(x,\alpha t))}\int_{B(x,t)}|f(y)|^pw(y)dyv(x)dx\right]\\
    &\leq C_{n,v}v\left(\left\{x\in\mathbb R^n:\left[
\frac{1}{w(B(x,t))}\int_{B(x,t)}|f(y)|^pw(y)dy\right]^{\frac{1}{p}}>\lambda\right\}\right)\\
    &\quad+C_{n,v}\left(\frac{1}{\lambda^p}\int_{E_\lambda}
\frac{1}{w(B(x,t))}\int_{B(x,t)}|f(y)|^pw(y)dyv(x)dx\right).
\end{align*}
Using this and the assumption that $1\leq q < p$, then
\begin{align*}
&\int_{\mathbb R^n}\left[\frac{1}{w(B(x,\alpha t))}\int_{B(x,t)}|f(y)|^pw(y)dy\right]^{\frac{q}{p}}v(x)dx\\
    &=\int_{0}^{\infty}q\lambda^qv\left(\left\{x\in\mathbb R^n:\left[\frac{1}{w(B(x,\alpha t))}
\int_{B(x,\alpha t)}|f(y)|^pw(y)dy\right]^{\frac{1}{p}}>\lambda\right\}\right)\frac{d\lambda}{\lambda}\\
    &\leq C_{n,v}\int_{0}^{\infty}q\lambda^qv\left(\left\{x\in\mathbb R^n:\left[\frac{1}{w(B(x,t))}
\int_{B(x,t)}|f(y)|^pw(y)dy\right]^{\frac{1}{p}}>\lambda\right\}\right)\frac{d\lambda}{\lambda}\\
    &\quad+C_{n,v}\int_{0}^{\infty}q\lambda^{q-\lambda}\int_{E_\lambda}\frac{1}{w(B(x,t))}
\int_{B(x,t)}|f(y)|^pw(y)dyv(x)dx\frac{d\lambda}{\lambda}\\
    &\leq C_{n,v}\int_{\mathbb R^n}\left[\frac{1}{w(B(x,t))}
\int_{B(x,t)}|f(y)|^pw(y)dy\right]^{\frac{q}{p}}v(x)dx\\
    &\quad+C_{n,v}\int_{\mathbb R^n}\frac{1}{w(B(x,t))}\int_{B(x,t)}|f(y)|^pw(y)dy\int_{\left[\frac{1}{w(B(x,\alpha t))}
\int_{B(x,\alpha t)}|f(y)|^pw(y)dy\right]^\frac{1}{p}}^{\infty}
q\lambda^{q-\lambda}\frac{d\lambda}{\lambda}v(x)dx\\
    &\leq C_{n,v}\int_{\mathbb R^n}\left[\frac{1}{w(B(x,t))}
\int_{B(x,t)}|f(y)|^pw(y)dy\right]^{\frac{q}{p}}v(x)dx.
\end{align*}
Thus, we complete the proof of Lemma $\ref{Pro}$.
\end{proof}

\begin{lemma}\cite{15}\label{ProMf}
Let $1<p\leq\infty$ and $w\in A_{p}$, then there exist a positive constant $C$ such that
$$
\lt(\int_{\mathbb R^n}|Mf(x)|^pw(x)dx\rt)^\frac{1}{p}\le C\lt(\int_{\mathbb R^n}|f(x)|^pw(x)dx\rt)^\frac{1}{p}.
$$
For all $\lambda>0$, if $p=1$, and $w\in A_{1}$, then there exist a positive constant $C$ such that
$$
w(\{x\in \mathbb R^n:|M(f)(x)|>\lambda\})\le \frac{C}{\lambda}\int_{\mathbb R^n}|f(x)|w(x)dx.
$$
The universal positive constant $C$ is independent of $f$ and $\lambda$.
\end{lemma}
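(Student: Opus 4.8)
I would establish this classical weighted maximal inequality of Muckenhoupt in three steps. First, the endpoint $p=\infty$ is immediate, since $|f(y)|\leq\|f\|_{L^\infty(\mathbb R^n)}$ for a.e. $y$ forces $Mf(x)\leq\|f\|_{L^\infty(\mathbb R^n)}$ for every $x\in\mathbb R^n$. The substantive content is the weak-type bound
$$
w\left(\left\{x\in\mathbb R^n:\ Mf(x)>\lambda\right\}\right)\leq\frac{C}{\lambda^p}\int_{\mathbb R^n}|f(x)|^pw(x)\,dx,\qquad\lambda>0,
$$
for each $p\in[1,\infty)$ and each $w\in A_p$; at $p=1$ this is precisely the second assertion of the lemma. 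To prove it, fix $\lambda$ and, for every $x$ with $Mf(x)>\lambda$, pick a ball $B_x$ centered at $x$ with $|B_x|^{-1}\int_{B_x}|f|\,dy>\lambda$. The Vitali covering lemma yields a countable pairwise disjoint subfamily $\{B_j\}_j$ with $\{Mf>\lambda\}\subset\bigcup_j 5B_j$, and since $w\in A_p\subset A_\infty$ is doubling (Proposition \ref{Eqweight}), $w(5B_j)\leq Cw(B_j)$; so it remains to bound $\sum_j w(B_j)$.

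For $p>1$, Hölder's inequality with $\sigma:=w^{1-p'}$ gives $\lambda|B_j|<\int_{B_j}|f|\leq\left(\int_{B_j}|f|^pw\right)^{1/p}\left(\int_{B_j}\sigma\right)^{1/p'}$, whereas the $A_p$ condition on $B_j$ reads $w(B_j)\left(\int_{B_j}\sigma\right)^{p-1}\leq C|B_j|^p$; eliminating $\int_{B_j}\sigma$ between the two yields $w(B_j)\leq C\lambda^{-p}\int_{B_j}|f|^pw$, and summing over the disjoint balls proves the weak-type estimate. For $p=1$, the $A_1$ condition $|B_j|^{-1}\int_{B_j}w\leq C\,\underset{B_j}{\mathrm{ess~inf}}\,w$ gives directly $w(B_j)\leq C|B_j|\,\underset{B_j}{\mathrm{ess~inf}}\,w\leq C\lambda^{-1}\int_{B_j}|f|w$, and summing gives the weak-$(1,1)$ bound.

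Finally, to pass from weak to strong type in the range $1<p<\infty$, I would use the self-improvement (openness) of the Muckenhoupt classes: any $w\in A_p$ belongs to $A_{p_0}$ for some $p_0\in(1,p)$, which follows from the reverse Hölder inequality for $A_\infty$ weights. By the second step $M$ is of weak type $(p_0,p_0)$ with respect to $w\,dx$ and is trivially bounded on $L^\infty(w\,dx)=L^\infty(\mathbb R^n)$, so Marcinkiewicz interpolation with the underlying measure $w\,dx$ gives $\|Mf\|_{L^p_w(\mathbb R^n)}\leq C\|f\|_{L^p_w(\mathbb R^n)}$ for the given $p$. The one ingredient here that is not a routine covering-plus-Hölder argument is this openness of $A_p$; that is the step I expect to be the real work. (One could instead try to feed a single strong-type estimate into the extrapolation Lemma \ref{waicha} to recover the full range, but producing that base estimate still requires the same weak-to-strong upgrade, so the reverse Hölder inequality cannot be avoided.)
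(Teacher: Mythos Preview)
Your argument is a correct outline of the classical proof of Muckenhoupt's theorem: the Vitali covering plus the $A_p$ condition give the weak-type $(p,p)$ bound for every $p\geq 1$, and then openness of $A_p$ (via reverse H\"older) together with Marcinkiewicz interpolation upgrades this to strong type for $1<p<\infty$. The only technicality you glide over is the exhaustion argument needed to apply Vitali on the possibly unbounded level set $\{Mf>\lambda\}$, but that is routine.

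However, there is nothing to compare with in the paper: Lemma~\ref{ProMf} is stated with the citation \cite{15} and is not proved in the paper at all. It is quoted as a known result from Muckenhoupt's original work and used as a black box in the subsequent arguments (e.g.\ in the proofs of Lemma~\ref{Pro}, Lemma~\ref{LeMf}, and Theorem~\ref{ThMf}). So your proposal supplies a proof where the paper supplies only a reference; the ``paper's own proof'' you were asked to match does not exist here.
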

\begin{lemma}\cite{8}\label{ProTf}
Let $1<p<\infty$ and $w\in A_{p}$, then there exist a positive constant $C$ such that
$$
\lt(\int_{\mathbb R^n}|Tf(x)|^pw(x)dx\rt)^\frac{1}{p}\le C\lt(\int_{\mathbb R^n}|f(x)|^pw(x)dx\rt)^\frac{1}{p}.
$$
For all $\lambda>0$, if $p=1$, and $w\in A_{1}$, then there exist a positive constant $C$ such that
$$
w\left(\{x\in \mathbb R^n:|T(f)(x)|>\lambda\}\right)\le
\frac{C}{\lambda}\int_{\mathbb R^n}|f(x)|w(x)dx.
$$
The universal positive constant $C$ is independent of $f$ and $\lambda$.
\end{lemma}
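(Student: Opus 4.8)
\medskip
\noindent\textbf{Proof plan for Lemma \ref{ProTf}.}\quad
Both estimates are classical facts about Calder\'on--Zygmund operators, and the plan is to deduce them from the (given) unweighted $L^2$-boundedness of $T$ together with the weight estimates already recorded in this section. Consider first the strong $(p,p)$ bound with $w\in A_p$. I would begin by establishing the Coifman--Fefferman good-$\lambda$ inequality: there are $\varepsilon>0$ and $C>0$, where $\varepsilon$ is the $A_\infty$ exponent supplied by Proposition \ref{Eqweight}(ii) (recall $A_p\subset A_\infty$), such that for every $\lambda>0$ and every sufficiently small $\gamma>0$,
$$
w\big(\{x\in\mathbb R^n:\,|Tf(x)|>2\lambda,\ Mf(x)\le\gamma\lambda\}\big)\le C\gamma^{\varepsilon}\,w\big(\{x\in\mathbb R^n:\,|Tf(x)|>\lambda\}\big).
$$
This is obtained in the usual way: cover $\{|Tf|>\lambda\}$ by Whitney cubes, on each cube split $f$ into a local part --- controlled by the unweighted weak $(1,1)$ bound for $T$, which itself follows from the $L^2$-boundedness via the ordinary Calder\'on--Zygmund decomposition --- and a distant part, controlled by the kernel regularity \eqref{Eqrc}; this produces a Lebesgue-measure good-$\lambda$ inequality, which \eqref{Eqw} then upgrades to the $w$-measure statement above.

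Having the good-$\lambda$ inequality, I would multiply it by $p\lambda^{p-1}$ and integrate over $\lambda\in(0,\infty)$; writing the left-hand norm as $p\int_0^\infty\lambda^{p-1}w(\{|Tf|>\lambda\})\,d\lambda$, choosing $\gamma$ so small that the factor $C\gamma^\varepsilon$ can be absorbed into the left side, and bounding the leftover term by $\|Mf\|_{L^p_w}^p$, one gets $\|Tf\|_{L^p_w}\le C\|Mf\|_{L^p_w}$; Lemma \ref{ProMf}, applicable because $w\in A_p$, then finishes with $\|Tf\|_{L^p_w}\le C\|f\|_{L^p_w}$. The one point requiring care is the absorption step, which needs $\|Tf\|_{L^p_w}<\infty$ a priori: I would first carry out the argument for $f$ bounded with compact support, for which $Tf(x)$ decays like $|x|^{-n}$ at infinity and hence lies in $L^p_w$ since an $A_p$ weight obeys $w(B(0,R))\lesssim R^{np}$, and then pass to general $f\in L^p_w$ by density. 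Alternatively, one could establish only the case $p=2$, $w\in A_2$ by this route and then obtain every $p\in(1,\infty)$ at once by applying the extrapolation Lemma \ref{waicha} to the family $\{(|Tf|,|f|)\}$ of bounded compactly supported $f$, with the same density step at the end.

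For the weak $(1,1)$ bound with $w\in A_1$, I would perform a weighted Calder\'on--Zygmund decomposition. Fix $\lambda>0$, take stopping cubes $\{Q_j\}$ with $\lambda<|Q_j|^{-1}\int_{Q_j}|f|\le 2^n\lambda$ and $|f|\le\lambda$ a.e.\ off $\Omega:=\bigcup_jQ_j$, and split $f=g+b$ with $g:=f\chi_{\Omega^c}+\sum_j f_{Q_j}\chi_{Q_j}$ and $b:=\sum_j b_j$, $b_j:=(f-f_{Q_j})\chi_{Q_j}$, where $f_{Q_j}:=|Q_j|^{-1}\int_{Q_j}f$. Here $|g|\le C\lambda$ and $\|g\|_{L^1_w}\le C\|f\|_{L^1_w}$, the $A_1$ condition entering through $|f_{Q_j}|\,w(Q_j)\le\big(|Q_j|^{-1}\int_{Q_j}|f|\big)w(Q_j)\le C\int_{Q_j}|f|\,w$; hence $\|g\|_{L^2_w}^2\le C\lambda\|f\|_{L^1_w}$, and by the already-proven $L^2_w$ bound (valid since $A_1\subset A_2$) and Chebyshev, $w(\{|Tg|>\lambda/2\})\le C\lambda^{-1}\|f\|_{L^1_w}$. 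For the bad part put $\Omega^*:=\bigcup_j 2\sqrt n\,Q_j$; then $w(\Omega^*)\le\sum_j w(2\sqrt n\,Q_j)\le C\sum_j w(Q_j)\le C\lambda^{-1}\sum_j\int_{Q_j}|f|\,w\le C\lambda^{-1}\|f\|_{L^1_w}$ by doubling, $A_1$, and $\lambda|Q_j|<\int_{Q_j}|f|$, while on $(\Omega^*)^c$ the mean-zero property of each $b_j$ combined with \eqref{Eqrc} gives $|Tb_j(x)|\lesssim\int_{Q_j}\ell(Q_j)^\delta|x-c_j|^{-n-\delta}|b_j(y)|\,dy$; integrating over $(2\sqrt n\,Q_j)^c$ via dyadic annuli and using the uniform-in-$k$ comparison $w(2^kQ_j)/|2^kQ_j|\le C\,w(Q_j)/|Q_j|$ from $A_1$ to sum the geometric series yields $\int_{(2\sqrt n\,Q_j)^c}|Tb_j|\,w\le C\lambda\,w(Q_j)$, hence $w(\{x\notin\Omega^*:|Tb(x)|>\lambda/2\})\le C\lambda^{-1}\sum_j\lambda\,w(Q_j)\le C\lambda^{-1}\|f\|_{L^1_w}$. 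Summing the three pieces proves the weak bound. The step I expect to be the main obstacle is exactly this last bad-part estimate: getting the interplay between the dilated cubes, the kernel regularity \eqref{Eqrc}, and the $A_1$ control of dilated $w$-averages quantitative enough that every geometric series converges with constants independent of $j$, $\lambda$, and $f$.
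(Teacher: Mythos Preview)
Your proof plan is correct and follows the classical Coifman--Fefferman route (good-$\lambda$ for the strong bound, weighted Calder\'on--Zygmund decomposition for the weak bound), which is precisely the argument given in the cited reference \cite{8}. Note, however, that the paper does not supply its own proof of this lemma at all: it is stated with the citation \cite{8} and used as a black box, so there is nothing in the paper to compare your argument against beyond the fact that you have reproduced the standard textbook proof.
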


\begin{lemma}\label{ProIf}\cite{16}
Let $0<\alpha<n$, $1<p<\frac n\alpha$, $\frac{1}{p}-\frac{1}{q}=\frac{\alpha}{n}$, and $w\in A_{p,q}$,
then there exist a positive constant $C$ such that
$$
\left(\int_{\mathbb R^n}|I_{\alpha}f(x)w(x)|^qdx\right)^\frac{1}{q}
    \le C\left(\int_{\mathbb R^n}|f(x)w(x)|^pdx\right)^\frac{1}{p}.
$$
If $p=1$, and $w\in A_{1,q}$ with $q=\frac n{n-\alpha}$, then for all $\lambda>0$,
then there exist a positive constant $C$ such that
$$
w\left(\{x\in \mathbb R^n:|I_{\alpha}(f)(x)|>\lambda \}\right)
    \leq C \lt(\frac1{\lambda}\int_{\mathbb R^n}|f(x)|w(x)^\frac 1q dx\rt)^q.
$$
The universal positive constant $C$ is independent of $f$ and $\lambda$.
\end{lemma}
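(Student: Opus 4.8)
This is the classical weighted Hardy--Littlewood--Sobolev inequality of Muckenhoupt and Wheeden, and the plan is to reduce both the strong-type and the weak-type assertions to the corresponding bounds for the \emph{fractional maximal operator}
$$
M_\alpha f(x):=\sup_{r>0}\frac1{|B(x,r)|^{1-\alpha/n}}\int_{B(x,r)}|f(y)|\,dy,
$$
which are far more elementary than the potential itself. The first observation I would record is that $w\in A_{p,q}$ forces $w^q\in A_q\subset A_\infty$ (part (ii) of the proposition on $A_{p,q}$ weights quoted above), so the self-improvement estimate \eqref{Eqw} of Proposition \ref{Eqweight} is available for the measure $w^q\,dx$.

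For the strong-type inequality (the case $1<p<n/\alpha$) I would proceed in two stages. First, I would prove the good-$\lambda$ inequality comparing $I_\alpha$ with $M_\alpha$: there are constants $C,\delta>0$, with $\delta$ furnished by \eqref{Eqw}, such that for all $\gamma\in(0,1)$ and $\lambda>0$,
$$
w^q\big(\{I_\alpha f>2\lambda,\ M_\alpha f\le\gamma\lambda\}\big)\le C\gamma^{\delta}\,w^q\big(\{I_\alpha f>\lambda\}\big).
$$
Multiplying by $q\lambda^{q-1}$, integrating over $(0,\infty)$, and choosing $\gamma$ so small that the resulting constant $C\gamma^\delta 2^q<\tfrac12$, one absorbs the $I_\alpha$-term (after a routine truncation to guarantee a priori finiteness) and obtains $\int(I_\alpha f)^qw^q\,dx\le C\int(M_\alpha f)^qw^q\,dx$. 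Second, I would establish the weighted strong bound $\|M_\alpha f\|_{L^q(w^q\,dx)}\le C\|fw\|_{L^p(\mathbb R^n)}$. Here the cleanest route is to exploit the openness of the $A_{p,q}$ class: since $w^q\in A_\infty$, the reverse H\"older inequality yields exponents $p_0<p<p_1$ lying on the line $1/q_i=1/p_i-\alpha/n$ with $w\in A_{p_0,q_0}\cap A_{p_1,q_1}$. For each endpoint the weak-type bound for $M_\alpha$ is elementary: a Vitali covering of $\{M_\alpha f>\lambda\}$ produces disjoint balls $B_j$ with $|B_j|^{\alpha/n-1}\int_{B_j}|f|>\lambda$, and H\"older with exponents $p_i,p_i'$ together with the $A_{p_i,q_i}$ factor closes the estimate. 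Passing to the sublinear operator $Sg:=w\,M_\alpha(w^{-1}g)$ turns these into unweighted weak-type bounds $L^{p_i}\to L^{q_i,\infty}$, and off-diagonal Marcinkiewicz interpolation (valid since $q_0\ne q_1$) gives $S:L^p\to L^q$, which is exactly the claimed strong bound for $M_\alpha$.

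For the weak-type endpoint ($p=1$, $q=n/(n-\alpha)$, $w\in A_{1,q}$, so $w^q\in A_1$ by part (iii) of the cited $A_{p,q}$ proposition) I would argue directly on $M_\alpha$: the same Vitali covering produces disjoint balls $B_j$, on each of which the selection inequality and the $A_{1,q}$ condition $\big(\tfrac1{|B_j|}\int_{B_j}w^q\big)^{1/q}\,\esssup_{B_j}w^{-1}\le C$ convert the local $L^1$ mass of $f$ into the quantity $\tfrac1\lambda\int|f|\,w^{1/q}$ appearing on the right-hand side; summing over the disjoint $B_j$ and using $w^q\in A_1$ to pass from $|B_j|$ to $w^q(B_j)$ gives the weak-type bound for $M_\alpha$ in the stated form. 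A pointwise comparison of $I_\alpha f$ with $M_\alpha f$ (of Welland type, or the direct covering argument of Muckenhoupt--Wheeden) then transfers this to $I_\alpha$. The main obstacle is the good-$\lambda$ inequality of the strong-type part, which requires a careful local splitting of $I_\alpha f$ near a point where $M_\alpha f$ is controlled, and, secondarily, verifying the openness of $A_{p,q}$ and the hypotheses of the off-diagonal interpolation theorem; the maximal-function and endpoint estimates are then comparatively routine.
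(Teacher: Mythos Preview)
The paper does not prove this lemma at all; it is stated with the citation \cite{16} (Muckenhoupt--Wheeden) and used as a black box, so there is no argument in the paper to compare against. Your proposal is a faithful outline of the classical Muckenhoupt--Wheeden proof---reduction of $I_\alpha$ to the fractional maximal operator $M_\alpha$ via a good-$\lambda$ inequality exploiting $w^q\in A_\infty$, followed by weak-type covering estimates for $M_\alpha$ and off-diagonal Marcinkiewicz interpolation for the strong bound, and a direct Vitali argument at the endpoint $p=1$---and it is correct in its essentials; you are simply supplying a proof where the paper intentionally provides none.
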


\section{The Proofs of Main Theorems}\label{s3}
In this section, the proofs of these theorems are given.
We first recall the definition of the uncentered Hardy--Littlewood maximal operator,
for almost every $x\in\mathbb R^n$,
$$
\overline{M}f(x):=\sup_{B\ni x}\frac {1}{|B|}\int_B|f(y)|dy.
$$
Then, we have the following pointwise inequality.
\begin{lemma}\label{LeMf}
For all $x\in$ $\mathbb R^n$, $t>0$, and $1<r< \infty$, and all $f$ locally $r$ integrable, then
\begin{equation}\label{EqMf}
\begin{split}
\left(\frac{1}{w(B(x,t))}\int_{B(x,t)}{{{| M(f)(y)|}^{r}}}w(y)dy\right)^{\tfrac{1}{r}}\lesssim \left(\frac{1}{w(B(x,2t))}{\int_{B(x,2t)}|f(y)|^r}w(y)dy\right)^{\tfrac{1}{r}}\\
+\overline{M}\left(\frac{1}{w(B(\cdot ,t))}\int_{B(\cdot ,t)}| f(z)|^rw(z)dz\right)^\frac{1}{r}\lt(x\rt).
\end{split}
\end{equation}
\end{lemma}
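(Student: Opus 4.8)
The plan is to prove \eqref{EqMf} by a Calder\'on--Zygmund splitting of $f$ relative to the ball $B(x,t)$: write $f=f_1+f_2$ with $f_1:=f\chi_{B(x,2t)}$ and $f_2:=f\chi_{\mathbb R^n\setminus B(x,2t)}$. Since $M(f)\le M(f_1)+M(f_2)$ pointwise, the quasi-triangle inequality for the power $r$ reduces the claim to estimating the two pieces separately, with $M(f_1)$ producing the first term on the right of \eqref{EqMf} and $M(f_2)$ the second. Here we use in addition that $w\in A_r$ (which holds in every application of this lemma): by Lemma \ref{ProMf}, $M$ is then bounded on $L^r_w(\mathbb R^n)$; by Proposition \ref{Eqweight}, $w$ is doubling; and the $A_r$ inequality $\big(\frac1{|B|}\int_B w^{1/(1-r)}\big)^{r-1}\lesssim\big(\frac1{|B|}\int_B w\big)^{-1}$ is available.

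For the local piece I would first drop the domain restriction and apply the weighted strong $(r,r)$ bound for $M$ from Lemma \ref{ProMf}, getting $\int_{B(x,t)}|M(f_1)|^rw\le\|M(f_1)\|_{L^r_w}^r\lesssim\int_{B(x,2t)}|f|^rw$; dividing by $w(B(x,t))$ and replacing it in the denominator by the comparable quantity $w(B(x,2t))$ (doubling) yields exactly the first term on the right of \eqref{EqMf}.

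The heart of the argument is the tail piece. Put $g(z):=\big(\frac1{w(B(z,t))}\int_{B(z,t)}|f|^rw\big)^{1/r}$; I would show $M(f_2)(y)\lesssim\overline M(g)(x)$ for every $y\in B(x,t)$, and since this bound is independent of $y$, averaging its $r$-th power over $B(x,t)$ against $w$ then delivers the second term on the right of \eqref{EqMf}. To prove the pointwise bound, fix $y\in B(x,t)$ and a radius $s>0$ with $\int_{B(y,s)}|f_2(u)|\,du>0$; then $B(y,s)$ must meet $\mathbb R^n\setminus B(x,2t)$ while $|x-y|<t$, which forces $s>t$, and hence $B(u,t)\subset B(y,2s)$ for all $u\in B(y,s)$ as well as $x\in B(y,2s)$. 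A Fubini computation gives $\int_{B(y,s)}|f(u)|\,du\le\int_{B(y,2s)}\big(\frac1{|B(z,t)|}\int_{B(z,t)}|f|\big)\,dz$, and H\"older's inequality with exponent $r$ together with the $A_r$ condition bounds the inner average by $\lesssim g(z)$; averaging over $B(y,2s)$ and using $x\in B(y,2s)$ yields $\frac1{|B(y,s)|}\int_{B(y,s)}|f|\lesssim\overline M(g)(x)$, and taking the supremum over admissible $s$ finishes the claim.

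The main obstacle is precisely this last step: one has to recognise that only balls of radius larger than $t$ contribute to $M(f_2)$ at points of $B(x,t)$, and then upgrade the ordinary average $\frac1{|B|}\int_B|f|$ to the \emph{weighted local} average $g$ that appears in \eqref{EqMf} --- this is where the Fubini swap and the $A_r$--H\"older estimate must be combined, and where the passage from the centred $M$ to the uncentred $\overline M$ is genuinely used. The local piece is routine once the weighted maximal inequality and the doubling property are granted.
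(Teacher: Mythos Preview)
Your argument is correct and essentially mirrors the paper's proof: the only cosmetic difference is that the paper splits the supremum defining $M(f)(y)$ according to whether the radius $\tau$ satisfies $\tau\le t$ or $\tau>t$ (rather than decomposing $f=f_1+f_2$), after which the treatment of each piece---the weighted $L^r_w$ bound for $M$ on the local part and the Fubini--H\"older--$A_r$ estimate on the tail---is the same as yours. Your explicit remark that $w\in A_r$ is needed (for both Lemma~\ref{ProMf} and the H\"older upgrade to the weighted average $g$) is in fact tacitly used in the paper's argument as well.
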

\begin{proof}[\bf{Proof of Lemma \ref{LeMf}}]
Fix $x \in \mathbb R^n$ and $t>0$, and split the sumprem into $0<\tau\le t$ and $t<\tau$, and then
\begin{align*}
&\left(\frac{1}{w(B(x,t))}\int_{B(x,t)}{{{| M(f)(y)|}^{r}}}w(y)dy\right)^{\tfrac{1}{r}}\\
   &\quad \le \left(\frac{1}{w(B(x,t))}\int_{B(x,t)}\left(\sup_{\substack{0<\tau\le t}}\frac{1}{|B(y,\tau)}\int_{B(y,\tau)}
|f(z)|dz\right)^rw(y)dy\right)^\frac{1}{r}\\
    &\qquad+\left(\frac{1}{w(B(x,t))}\int_{B(x,t)}\left(\sup_{\substack{\tau>t}} \frac{1}{|B(y,\tau)}\int_{B(y,\tau)}
|f(z)|dz\right)^rw(y)dy\right)^\frac{1}{r}
    :=\uppercase\expandafter{\romannumeral1}+\uppercase\expandafter{\romannumeral2}.	
\end{align*}
For $I$, since $y \in B(x,t)$,  $B(y,\tau)\subset B(x,2t)$. Then it follows from Lemma \ref{ProMf} that
\begin{align*}
I&= \left(\frac{1}{w(B(x,t))}\int_{B(x,t)}\left(\sup_{\substack{0<\tau\le t}}\frac{1}{|B(y,\tau)|}\int_{B(y,\tau)} |f(z)|dz\right)^rw(y)dy\right)^\frac{1}{r}\\
    &\le \left(\frac{1}{w(B(x,t))}\int_{B(x,t)}|M(f\cdot \chi_{B(x,2t)} )(y)|^rw(y)dy)\right)^\frac{1}{r}\\
    &\lesssim \left(\frac{1}{w(B(x,2t))}\int_{B(x,2t)}|f(y)|^rw(y)dy\right)^\frac{1}{r}.
\end{align*}
For $II$, for any $z,\xi \in \mathbb R^n$, $\xi \in B(z,t)$ is equivalent to $z\in B(\xi,t)$. If $z\in B(y,\tau)$, $\xi \in B(z,t)$, then $\xi \in B(y,2\tau)$. Besides, owing to $x\in B(y,t)$, then $x\in B(y,2\tau)$. Applying the Fubini's theorem and H\"older's inequality, then we get
\begin{align*}
\uppercase\expandafter{\romannumeral2}
    &=\lt(\frac{1}{w(B(x,t))}\int_{B(x,t)}\lt(\sup_{\substack{\tau>t}} \frac{1}{|B(y,\tau)|}\int_{B(y,\tau)}
|f(z)|dz\rt)^rw(y)dy\rt)^\frac{1}{r}\\
    &\le \lt(\frac{1}{w(B(x,t))}\int_{B(x,t)}\lt(\sup_{\substack{\tau>t}} \frac{1}{|B(y,\tau)|}\int_{B(y,\tau)}
|f(z)|{\frac{1}{|B(z,t)|}}\int_{B(z,t)}d\xi dz\rt)^rw(y)dy\rt)^\frac{1}{r}\\
    &\le \lt(\frac{1}{w(B(x,t))}\int_{B(x,t)}\lt(\sup_{\substack{\tau>t}} \frac{1}{|B(y,2\tau)|}\int_{B(y,\tau)}
{\frac{1}{|B(\xi,t)|}}\int_{B(\xi,t)}|f(z)|dz d\xi\rt)^rw(y)dy\rt)^\frac{1}{r}\\
    &\le\overline{M}\lt(\frac{1}{|B(\cdot,t)|}\int_{B(\cdot,t)}|f(z)|dz\rt)(x)\\
    &\le\overline{M}\lt(\lt(\frac{1}{w(B(\cdot,t))}\int_{B(\cdot,t)}|f(z)|^rw(z)dz\rt)^\frac{1}{r}\rt)(x).
\end{align*}
Thus, we complete the proof of Lemma \ref{LeMf}.
\end{proof}

\begin{proof}[\bf{Proof of Theorem \ref{ThMf}}]
We first prove the case $p\in(1,\infty)$ and $q\in(1,\infty]$. By Lemma \ref{Pro}, Lemma\ref{ProMf},
and  Lemma \ref{LeMf}, then
\begin{align*}
  ||M(f)||_{(L_w^p,L_v^q)_t(\mathbb R^n)}&={\left\|{\left(\frac{1}{w(B(\cdot,t))}\int_{B(\cdot,t)}| M(f)(y)|^p
  w(y)dy\right)^{\frac{1}{p}}}\right\|}_{L^q_v(\mathbb R^n)}\\
&\lesssim \left\|\left(\frac{1}{w(B(\cdot,2t))}\int_{B(\cdot,2t)}{|f(y)|}^{p}w(y)dy\right)^\frac{1}{p}
\right\|_{L_{v}^q(\mathbb R^n)}+\left\|\overline{M}\left(\frac{1}{w(B(\cdot ,t))}\int_{B(\cdot ,t)}
   |f(z)|^pw(z)dz\right)^\frac{1}{p}\right\|_{L^q_v(\mathbb R^n)} \\
&\lesssim ||f||_{(L_w^p,L_v^q)_t(\mathbb R^n)} +\left\|\left(\frac{1}{w(B(\cdot ,t))}\int_{B(\cdot ,t)}
|f(z)|^pw(z)dz\right)^\frac{1}{p}\right\|_{L^q_v(\mathbb R^n)}\lesssim {||f||}_{(L_w^p,L_v^q)_t(\mathbb R^n)}.
\end{align*}	
\par And then for the case of $p>1$ and $q=1$.\\
By Lemma \ref{DuiO}, there exists $g\in (L^{p'}_{w'},L^{\infty}_{v'})_t(\mathbb R^n)$ such that
$$
\left\|\chi_{\{x\in\mathbb R^n:|Mf(x)|>\lambda\}}\right\|_{(L^p_w, L^1_v)_t(\mathbb R^n)}
    =\int_{\mathbb R^n}\chi_{\{x\in\mathbb R^n:|Mf(x)|>\lambda\}}(x)g(x)dx.
$$
Since $g(x)\leq [M(|g|^\frac{1}{\gamma})]^\gamma(x)$ and $[M(|g|^\frac{1}{\gamma})]^\gamma(x)\in A_1$ for $\gamma>1$,
by Lemma \ref{ProMf}, then it can obtain that
\begin{align*}
\lambda\left\|\chi_{\{x\in\mathbb R^n:|Mf(x)|>\lambda\}}\right\|_{(L^p_w, L^1_v)_t(\mathbb R^n)}
    &\leq \lambda\int_{\mathbb R^n}\chi_{\{x\in\mathbb R^n:|Mf(x)|>\lambda|\}}(x)
[M(|g|^\frac{1}{\gamma})]^\gamma(x)dx\\
    &\leq C\int_{\mathbb R^n}|f(x)| [M(|g|^\frac{1}{\gamma})]^\gamma(x)dx.
\end{align*}
By taking the supremum over all $\lambda>0$, then we get
$$
\|Mf\|_{W(L^p_w,L^1_v)(\mathbb R^n)}\leq C\int_{\mathbb R^n}|f(x)| [M(|g|^\frac{1}{\gamma})]^\gamma(x)dx.
$$
By Lemma \ref{Holder} and the fact $M$ is bounded on $(L^{p'}_{w'}, L^{\infty}_{v'})_t(\mathbb R^n)$,
$$
\|Mf\|_{W(L^p_w, L^1_v)_t(\mathbb R^n)}\leq C\|f\|_{(L^p_w, L^1_v)_t(\mathbb R^n)}
    \|[M(|g|^\frac{1}{\gamma})]^\gamma\|_{(L^{p'}_{w'},L^{\infty}_{v'})_t(\mathbb R^n)}
\leq \|f\|_{(L^p_w, L^1_v)_t(\mathbb R^n)}
    \|g\|_{(L^{p'}_{w'},L^{\infty}_{v'})_t(\mathbb R^n)}.
$$
Hence,
$$
\|Mf\|_{W(L^p_w, L^1_v)_t(\mathbb R^n)}\leq C\|f\|_{(L^p_w, L^1_v)_t(\mathbb R^n)}.
$$
This completes the proof of the Theorem 1.1.
\end{proof}

\begin{remark}
For any $x\in\mathbb R^n\backslash\{0\}$,
$$
Hf(x):=\frac1{|x|^n}\int_{|y|\leq |x|}|f(y)|dy=\frac C{|B(0,|x|)|}\int_{B(0,|x|)}|f(y)|dy\leq CMf(x).
$$
By Theorem \ref{ThMf}, we conclude that the Hardy operator $H$ is bounded on $(L^p_w,L^q_v)_t(\mathbb R^n)$
for $p,q>1$.
\end{remark}

\begin{proof}[\bf{Proof of theorem \ref{ThTf}}]
Let $p=r\theta$, $q=s\theta$, where $\theta>1$, $r>1$ and $s>1$. Then, by Lemma \ref{DuiO},
for $g\in (L^{p'}_{w'},L^{q'}_{v'})_t(\mathbb R^n)$, then
$$
\|Tf\|_{(L^p_w, L^q_v)_t(\mathbb R^n)}=\left\||Tf|^\theta\right\|^\frac{1}{\theta}_{(L^r_w, L^s_v)_t(\mathbb R^n)}
=\left(\int_{\mathbb R^n}|Tf(x)|^\theta g(x)dx\right)^\frac{1}{\theta}.
$$
Let $w:=[M(|g|^\frac{1}{\gamma})]^\gamma(x)$. The fact $[M(|g|^\frac{1}{\gamma})]^\gamma(x)\in A_1$,
$g(x)\leq [M(|g|^\frac{1}{\gamma})]^\gamma(x)$,
and Lemma \ref{ProTf} yield
$$
\|Tf\|_{(L^p_w, L^q_v)_t(\mathbb R^n)}\leq \left(\int_{\mathbb R^n}|Tf(x)|^\theta
    [M(|g|^\frac{1}{\gamma})]^\gamma (x)dx\right)^\frac{1}{\theta}
    \leq C\left(\int_{\mathbb R^n}|f(x)|^\theta
    [M(|g|^\frac{1}{\gamma})]^\gamma(x) dx\right)^\frac{1}{\theta}.
$$
By Lemma \ref{Holder} and Theorem \ref{ThMf}, thus,
$$
\|Tf\|_{(L^p_w, L^q_v)_t(\mathbb R^n)}\lesssim\||f|^\theta\|_{(L^r_w, L^s_v)_t(\mathbb R^n)}^\frac{1}{\theta}
    \|[M(|g|^\frac{1}{\gamma})]^\gamma\|_{(L^{r'}_{w'},L^{s'}_{v'})_t(\mathbb R^n)}
\lesssim\|f\|_{(L^p_w, L^q_v)_t(\mathbb R^n)}\|g\|_{(L^{p'}_{w'},L^{q'}_{v'})_t(\mathbb R^n)}.
$$
And hence we complete the proof for the case of $p,q>1$.

Now, we consider $p>1$ and $q=1$.
\par Taking $g\in (L^{p'}_{w'},L^{\infty}_{v'})_t(\mathbb R^n)$,
by Lemma \ref{DuiO}, we can write
$$
\left\|\chi_{\{x\in\mathbb R^n:|Tf(x)|>\lambda\}}\right\|_{(L^p_w, L^1_v)_t(\mathbb R^n)}
    =\int_{\mathbb R^n}\chi_{\{x\in\mathbb R^n:|Tf(x)|>\lambda\}}(x)g(x)dx.
$$
As for $g(x)\leq [M(|g|^\frac{1}{\gamma})]^\gamma(x)$ with $\gamma>1$, and by Lemma \ref{ProTf}, then we know
\begin{align*}
\lambda\left\|\chi_{\{x\in\mathbb R^n:|Tf(x)|>\lambda\}}\right\|_{(L^p_w, L^1_v)_t(\mathbb R^n)}
    &\leq \lambda\int_{\mathbb R^n}\chi_{\{x\in\mathbb R^n:|Tf(x)|>\lambda|\}}(x)
[M(|g|^\frac{1}{\gamma})]^\gamma(x)dx\\
    &\leq C\int_{\mathbb R^n}|f(x)| [M(|g|^\frac{1}{\gamma})]^\gamma(x)dx.
\end{align*}
Finally, take the supremum over $\lambda>0$, hence, it can show that
$$
\|Tf\|_{W(L^p_w,L^1_v)(\mathbb R^n)}\leq C\int_{\mathbb R^n}|f(x)| [M(|g|^\frac{1}{\gamma})]^\gamma(x)dx.
$$
From Lemma \ref{Holder} and Theorem \ref{ThMf}, hence,
\begin{align*}
\|Tf\|_{W(L^p_w, L^1_v)_t(\mathbb R^n)}
    &\leq C\|f\|_{(L^p_w, L^1_v)_t(\mathbb R^n)}
\|[M(|g|^\frac{1}{\gamma})]^\gamma\|_{(L^{p'}_{w'},L^{\infty}_{v'})_t(\mathbb R^n)}\\
    &\leq \|f\|_{(L^p_w, L^1_v)_t(\mathbb R^n)}
\|g\|_{(L^{p'}_{w'},L^{\infty}_{v'})_t(\mathbb R^n)}.
\end{align*}
Hence, $\|Tf\|_{W(L^p_w, L^1_v)_t(\mathbb R^n)}\leq C\|f\|_{(L^p_w, L^1_v)_t(\mathbb R^n)}$,
the result holds.
\end{proof}

\begin{proof}[\bf{Proof of Theorem \ref{ThIf}}]
For $p_0,\,q_0>1$, let
$$
\frac{1}{p}=\frac{1}{p_0}-\frac{\alpha}{n},\quad \frac{1}{q}=\frac{1}{q_0}-\frac{\alpha}{n}.
$$
For $\theta>1$ such that $\frac{p}{\theta}>1$ and $\frac{q}{\theta}>1$. Then,
for $g\in (L^{(p/\theta)'}_{w'},L^{(q/\theta)'}_{v'})_t(\mathbb R^n)$, by Lemma \ref{DuiO},
then we can obtain that
$$
    \|I_\alpha f\|_{(L^p_w, L^q_v)_t(\mathbb R^n)}=\left\||I_\alpha f|^\theta
\right\|^\frac{1}{\theta}_{(L^{p/\theta}_w, L^{q/\theta}_v)_t(\mathbb R^n)}
    =\left(\int_{\mathbb R^n}|I_\alpha f(x)|^\theta g(x)dx\right)^\frac{1}{\theta}.
$$
Noticing that for $0<\eta<1$, $M_\eta(|g|)(x):=[M(|g|^\eta)]^\frac{1}{\eta}(x)\in A_1$,
and letting $w:=[M_\eta(|g|)(x)]^\frac{1}{\theta}$, we have $w^\theta\in A_1$
and hence $w^\theta\in A_{\theta\frac{n-\alpha}{n}}$. Denote $\frac{1}{\gamma}=\frac{1}{\theta}+\frac{\alpha}{n}$.
Then $w\in A_{\gamma,\theta}$. Moreover, by Lemmas \ref{Holder} and \ref{ProIf}, then
\begin{align*}
\|I_\alpha f\|_{(L^p_w, L^q_v)_t(\mathbb R^n)}
    &\leq \lt[\int_{\mathbb R^n}|I_\alpha f(x)|^\theta M_\eta(|g|)(x)dx\rt]^\frac1\theta
    \leq C\lt[\int_{\mathbb R^n}|f(x)|^\gamma[w(x)]^\gamma dx\rt]^\frac1\gamma\\
    &\leq C\lt[\||f|^\gamma\|_{L^{p/\theta}_w, L^{q/\theta}_v)_t(\mathbb R^n)}
\|[M_\eta(|g|)]^\frac\gamma\theta\|_{(L^{(p/\theta)'}_{w'},L^{(q/\theta)'}_{v'})_t(\mathbb R^n)}\rt]^\frac 1\gamma\\
    &=C\|f\|_{(L^{p_0}_w, L^{q_0}_v)_t(\mathbb R^n)}
\|[M_\eta(|g|)]^\frac\gamma\theta\|_{(L^{(p_0/\gamma)'}_{w'},L^{(q_0/\gamma)'}_{v'})_t(\mathbb R^n)}^\frac 1\gamma.
\end{align*}
Applying Theorem \ref{ThMf}, then
\begin{align*}
\|[M_\eta(|g|)]^\frac\gamma\theta\|_{(L^{(p_0/\gamma)'}_{w'},L^{(q_0/\gamma)'}_{v'})_t(\mathbb R^n)}^\frac 1\gamma
    &=\|[M(|g|^\eta)]^\frac\gamma{\eta\theta}\|
_{(L^{(p_0/\gamma)'}_{w'},L^{(q_0/\gamma)'}_{v'})_t(\mathbb R^n)}^\frac 1\gamma\\
    &=\|M(|g|^\eta)\|_{(L^{(p_0/\gamma)'\frac\gamma{\eta\theta}}_{w'},
L^{(q_0/\gamma)'\frac\gamma{\eta\theta}}_{v'})_t(\mathbb R^n)}^\frac 1{\eta\theta}\\
    &\leq C\||g|^\eta\|_{(L^{(p_0/\gamma)'\frac\gamma{\eta\theta}}_{w'},
L^{(q_0/\gamma)'\frac\gamma{\eta\theta}}_{v'})_t(\mathbb R^n)}^\frac 1{\eta\theta}\\
    &=C\|g\|_{(L^{(p_0/\gamma)'\frac\gamma{\theta}}_{w'},
L^{(q_0/\gamma)'\frac\gamma{\theta}}_{v'})_t(\mathbb R^n)}^\frac 1\theta.
\end{align*}
It is easy to check that
$$
\frac 1{(p_0/\gamma)'\frac \gamma\theta}=(1-\frac\gamma{p_0})\frac\theta\gamma
=(1-\frac\gamma p-\frac{\gamma\alpha}n)\frac\theta\gamma=\theta(\frac 1\gamma+\frac\alpha n)
=1-\frac1 {p/\theta}=\frac1{(p/\theta)'}
$$
and
$$
\frac 1{(q_0/\gamma)'\frac \gamma\theta}=(1-\frac\gamma{q_0})\frac\theta\gamma
=(1-\frac\gamma q-\frac{\gamma\alpha}n)\frac\theta\gamma=\theta(\frac 1\gamma+\frac\alpha n)
=1-\frac1 {q/\theta}=\frac1{(q/\theta)'}.
$$
So
$$
\|I_\alpha f\|_{(L^p_w, L^q_v)_t(\mathbb R^n)}\leq C\|f\|_{(L^{p_0}_w, L^{q_0}_v)_t(\mathbb R^n)}.
$$
Then, we consider the case of theorem \ref{ThIf} (b).
\par For $p_0>1$ and $q_0=1$, let
$$
\frac{1}{p}=\frac{1}{p_0}-\frac{\alpha}{n},\quad \frac{1}{q}=1-\frac{\alpha}{n}.
$$
Take $\theta=q=\frac n{n-\alpha}$. Then,
for $g\in (L^{(p/\theta)'}_{w'},L^{\infty}_{v'})_t(\mathbb R^n)$, by Lemma \ref{DuiO},
we write
\begin{align*}
\lambda\|\chi_{\{x\in\mathbb R^n:|I_\alpha f(x)|>\lambda\}}\|_{(L^p_w, L^q_v)_t(\mathbb R^n)}
    &=\lambda\left\||\chi_{\{x\in\mathbb R^n:|I_\alpha f(x)|>\lambda\}}|^\theta
\right\|^\frac{1}{\theta}_{(L^{p/\theta}_w, L^{q/\theta}_v)_t(\mathbb R^n)}\\
    &=\lambda\left(\int_{\mathbb R^n}\chi_{\{x\in\mathbb R^n:|I_\alpha f(x)|>\lambda\}}(x)
g(x)dx\right)^\frac{1}{\theta}.
\end{align*}
And letting $w:=[M_\eta(|g|)(x)]^\frac{1}{\theta}$ with $0<\eta<1$, we have $w^\theta\in A_1$
and hence $w^\theta\in A_{\theta\frac{n-\alpha}{n}}$.
Then $w\in A_{1,\theta}$. By Lemma \ref{Holder} and Lemma \ref{ProIf}, it can obtain that
\begin{align*}
\lambda\|\chi_{\{x\in\mathbb R^n:|I_\alpha f(x)|>\lambda\}}\|_{(L^p_w, L^q_v)_t(\mathbb R^n)}
    &=\lambda\left[\int_{\mathbb R^n}\chi_{\{x\in\mathbb R^n:|I_\alpha f(x)|>\lambda\}}(x)
M_\eta g(x)dx\right]^\frac{1}{\theta}\\
    &\leq C\int_{\mathbb R^n}|f(x)|w(x)dx\\
    &\leq C\|f\|_{(L^{p_0}_w, L^1_v)_t(\mathbb R^n)}
\|[M_\eta(g)]^\frac1\theta\|_{(L^{p'_0}_{w'},L^{\infty}_{v'})_t(\mathbb R^n)}.
\end{align*}
From Theorem \ref{ThMf}, it follows that
\begin{align*}
\|[M_\eta(g)]^\frac1\theta\|_{(L^{p'_0}_{w'},L^{\infty}_{v'})_t(\mathbb R^n)}
    &=\|[M(|g|^\eta)]^\frac1{\eta\theta}\|
_{(L^{p'_0}_{w'},L^{\infty}_{v'})_t(\mathbb R^n)}=\|M(|g|^\eta)\|_{(L^{{p'_0}/{\eta\theta}}_{w'},
L^{\infty}_{v'})_t(\mathbb R^n)}^\frac 1{\eta\theta}\\
    &\leq C\||g|^\eta\|_{(L^{{p'_0}/{\eta\theta}}_{w'},
L^{\infty}_{v'})_t(\mathbb R^n)}^\frac 1{\eta\theta}=C\|g\|_{(L^{{p'_0}/{\theta}}_{w'},
L^{\infty}_{v'})_t(\mathbb R^n)}^\frac 1\theta.
\end{align*}
since $
\frac 1{p_0'/\theta}=(1-\frac1{p_0})\theta
=(1-\frac1p-\frac{\alpha}n)\theta
=(\frac1\theta-\frac1p)\theta
=1-\frac1{p/\theta}=\frac1{(p/\theta)'}$,
then we have
$$
\|I_\alpha f\|_{W(L^p_w, L^q_v)_t(\mathbb R^n)}\leq C\|f\|_{(L^{p_0}_w, L^1_v)_t(\mathbb R^n)}.
$$
Thus, the result holds.
\end{proof}

\section{Further Remarks}\label{s4}
Notice that the proof of Theorem \ref{ThTf} only needs to use the boundedness of $M$
on $(L^p_w, L^q_v)_t(\mathbb R^n)$ and the boundedness of any operator on $L^p_w(\mathbb R^n)$.
So we introduce some operators, their simple proofs use Theorem \ref{ThMf}
and the boundedness of the operator on $L^p_w(\mathbb R^n)$ and is omitted.
\par Let $\mathbb S^{n-1}(n\ge2)$  be the unit sphere in $\mathbb R^n$ equipped with the normalized
Lebesgue measure $d\sigma$, $\Omega(x)$ is homogeneous of
degree zero on $\mathbb R^n$ and $\Omega\in L^\theta(\mathbb S^{n-1})$
with $1<\theta\leq\infty$ and such that
\begin{equation}\label{Proome}
\int_{\mathbb S^{n-1}}\Omega(x')d\sigma(x')=0
\end{equation}
where
$x'=\frac x{|x|}$ for any $x\neq0$, the homogeneous singular integral operator $T_\Omega$ can be defined by
$$
T_\Omega f(x)=p.v.\int_{\mathbb R^n} \frac{\Omega(y')}{|y|^n}f(x-y)dy,
$$
and the Marcinkiewicz integral of higher dimension $\mu_\Omega$ by
$$
\mu_\Omega f(x)=\lt(\int_{0}^{\infty}\lt|\int_{|x-y|\leq t}
\frac{\Omega(x-y)}{|x-y|^{n-1}}f(y)dy\rt|^2\frac{dt}{t^3}\rt)^\frac12.
$$
\begin{lemma}\label{ProTo}\cite{refTo}
For $\Omega\in L^\theta(\mathbb S^{n-1})$ and $1<\theta<\infty$, if $\theta'\leq p<\infty$
and $w\in A_{p/\theta'}$, then there exist $C>0$ such that
$$
\|T_\Omega (f)\|_{L^p_w(\mathbb R^n)}\le C\|f\|_{L^p_w(\mathbb R^n)}.
$$
If $p=1$, $w\in A_{1}$, then there exist a constant $C>0$ such that
$$
\|T_\Omega(f)\|_{L^{1,\infty}_w(\mathbb R^n)}\le C\|f\|_{L^1_w(\mathbb R^n)}.
$$
\end{lemma}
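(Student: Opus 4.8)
This is the classical weighted estimate for rough homogeneous singular integrals, and the plan is to follow the dyadic Fourier-transform method of Duoandikoetxea and Rubio de Francia. First I would decompose the kernel $K(y)=\Omega(y')|y|^{-n}$ dyadically, $K=\sum_{j\in\mathbb Z}K_j$ with $K_j(y):=K(y)\chi_{\{2^j\le|y|<2^{j+1}\}}(y)$, so that $T_\Omega f=\sum_jK_j*f$ (convergence in the principal-value sense) and $\|K_j\|_{L^1(\mathbb R^n)}\lesssim\|\Omega\|_{L^1(\mathbb S^{n-1})}\lesssim\|\Omega\|_{L^\theta(\mathbb S^{n-1})}$ uniformly in $j$. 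Two Fourier estimates drive everything: the mean-zero hypothesis \eqref{Proome} yields $|\widehat{K_j}(\xi)|\lesssim\min\{1,|2^j\xi|\}$, while the $L^\theta$-integrability of $\Omega$, through an $L^2$-averaging (Plancherel on $\mathbb S^{n-1}$) argument combined with the decay of the Fourier transform of the sphere, yields $|\widehat{K_j}(\xi)|\lesssim|2^j\xi|^{-\beta}$ for $|2^j\xi|\ge1$ with some $\beta=\beta(\theta,n)>0$. Summing these gives $\sum_j|\widehat{K_j}(\xi)|\lesssim1$, hence boundedness of $T_\Omega$ on the unweighted $L^2(\mathbb R^n)$.

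To reach $L^p_w$ I would insert a smooth radial Littlewood--Paley partition of unity $\{\psi_k\}$ and write $T_\Omega f=\sum_{l\in\mathbb Z}T_lf$, where $T_lf:=\sum_jK_j*\psi_{j-l}*f$. Combining the Fourier estimates above with the almost-orthogonality of the $\psi_k$ gives the decaying bound $\|T_lf\|_{L^2(\mathbb R^n)}\lesssim 2^{-\beta'|l|}\|f\|_{L^2(\mathbb R^n)}$. On the other hand, the pointwise control $|K_j|*|g|(x)\lesssim\big(M(|g|^{\theta'})(x)\big)^{1/\theta'}$ (H\"older on the sphere), together with the weighted vector-valued maximal inequality and the weighted Littlewood--Paley inequality — both standard consequences of the $A_p$ theory underlying Lemmas \ref{ProMf} and \ref{ProTf} — gives $\|T_lf\|_{L^p_w(\mathbb R^n)}\lesssim(1+|l|)\|f\|_{L^p_w(\mathbb R^n)}$ for every $p$ with $p/\theta'>1$ and every $w\in A_{p/\theta'}$. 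Interpolating these two families of estimates with change of weights (using that the $A_r$ classes are open) turns the polynomial factor into $2^{-\varepsilon|l|}$ near any admissible pair $(p,w)$, and summing the geometric series in $l$ yields the strong bound $\|T_\Omega f\|_{L^p_w}\lesssim\|f\|_{L^p_w}$ for all $\theta'<p<\infty$ and $w\in A_{p/\theta'}$; the strong endpoint $p=\theta'$, $w\in A_1$, can then be recovered from these by a limited-range extrapolation in the spirit of Lemma \ref{waicha}.

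Finally, for the weak endpoint $p=1$, $w\in A_1$, the standard route is a Calder\'on--Zygmund decomposition of $f$ at height $\lambda$ adapted to $w$: the good part is absorbed by the $L^2$ bound and Chebyshev, while the bad part requires a more delicate argument exploiting the mean-zero of the atoms, the annular decomposition $K=\sum_jK_j$, and the maximal control of Lemma \ref{ProMf} (the Christ--Rubio de Francia type analysis for rough kernels, since $K$ is not a standard Calder\'on--Zygmund kernel). The main obstacle is precisely this interplay of roughness with endpoints: deriving the power decay $|\widehat{K_j}(\xi)|\lesssim|2^j\xi|^{-\beta}$ from mere $L^\theta$-integrability — routine stationary phase when $\Omega$ is smooth, but requiring the $L^2$-averaging argument here, and it is this that forces both the restriction $p\ge\theta'$ and the weight class $A_{p/\theta'}$ — together with the rough weak $(1,1)$ bound, which does not follow from a single Calder\'on--Zygmund decomposition.
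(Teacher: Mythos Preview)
The paper does not prove this lemma at all: it is quoted verbatim as a known result with the citation \cite{refTo} (Duoandikoetxea, \emph{Trans.\ Amer.\ Math.\ Soc.}\ 336 (1993)), and the authors use it only as a black box to feed into the extrapolation machinery of Theorem~\ref{ThMf}. So there is nothing in the paper to compare your argument against.

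That said, your sketch is faithful to the method of the cited reference: the dyadic annular decomposition of the kernel, the two Fourier estimates (cancellation for small frequencies, $L^\theta$-averaged decay for large ones), the Littlewood--Paley regrouping $T_\Omega=\sum_l T_l$ with exponential $L^2$-decay versus polynomial weighted-$L^p$ growth, and interpolation with change of measure exploiting the openness of $A_r$ --- this is exactly Duoandikoetxea's argument for the strong bound. One small caveat: recovering the endpoint $p=\theta'$, $w\in A_1$ by ``limited-range extrapolation in the spirit of Lemma~\ref{waicha}'' is not quite how the original reference proceeds (there it comes out of the same interpolation, since $A_1\subset A_r$ for all $r>1$), and Lemma~\ref{waicha} as stated is the full-range Rubio de Francia theorem, not the off-diagonal/limited-range version you would need. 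For the weighted weak $(1,1)$ part, be aware that this does not appear in \cite{refTo}; the rough weighted endpoint is a separate and harder result (closer to Vargas \cite{refT*2} and subsequent work), so if you intend to supply a proof rather than a citation you should flag that the Christ--Rubio de Francia style argument you allude to is genuinely nontrivial and not a routine Calder\'on--Zygmund decomposition.
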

\begin{theorem}\label{ThTo}
Let $0<t<\infty$, $\Omega\in L^\theta(\mathbb S^{n-1})$, $1<\theta,\gamma<\infty$, $\theta'\leq p<\infty$,
$w\in A_{p/\theta'}$.\\
(a). If $\gamma'\leq q<\infty$, and $v\in A_{q/\gamma'}$, then
$$
||T_\Omega(f)||_{(L_w^p,L_v^q)_t(\mathbb R^n)}\leq C ||f||_{(L_w^p,L_v^q)_t(\mathbb R^n)}.
$$
(b). If $q=1$ and $v\in A_1$, then
$$
||T_\Omega(f)||_{W(L_w^p,L_v^1)_t(\mathbb R^n)}\leq  C||f||_{(L_w^p,L_v^1)_t(\mathbb R^n)}.
$$
The positive constant $C$ is independent of $f$ and $t$.
\end{theorem}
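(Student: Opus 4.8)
The plan is to imitate the proof of Theorem \ref{ThTf} almost verbatim, replacing the weighted bounds for the Calder\'on--Zygmund operator (Lemma \ref{ProTf}) by the weighted bounds for $T_\Omega$ in Lemma \ref{ProTo}. As observed at the start of this section, that scheme needs only two inputs: the boundedness of the Hardy--Littlewood maximal operator on weighted amalgam spaces (Theorem \ref{ThMf}) and a weighted $L^\lambda$ bound (resp. weighted weak $(1,1)$ bound) for the operator. The one new feature is that $T_\Omega$ is bounded on $L^\lambda_\omega$ only for $\lambda\ge\theta'$ and $\omega\in A_{\lambda/\theta'}$, instead of for all $\omega\in A_\lambda$, so the exponent used in the ``un-powering'' step must be tuned so that every Muckenhoupt condition falls into place.

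For part (a), I would set $\lambda_0:=\min\{\theta',\gamma'\}$ and write $p=r\lambda_0$ and $q=s\lambda_0$; under the hypotheses $\theta'\le p$, $\gamma'\le q$ this gives $r,s\ge1$, and I will assume $r,s>1$, the equality cases being set aside for a separate endpoint treatment. After assuming (without loss of generality) that $\Omega\in L^\theta\cap L^\gamma$, so that $\Omega\in L^{\lambda_0'}$, I would use the identity $\|F\|_{(L^p_w,L^q_v)_t}=\||F|^{\lambda_0}\|^{1/\lambda_0}_{(L^r_w,L^s_v)_t}$ and the duality of Lemma \ref{DuiO} to pick $g$ in the unit ball of $[(L^r_w,L^s_v)_t]'$ with
$$
\|T_\Omega f\|_{(L^p_w,L^q_v)_t(\mathbb R^n)}=\lt(\int_{\mathbb R^n}|T_\Omega f(x)|^{\lambda_0}g(x)\,dx\rt)^{1/\lambda_0}.
$$
Exactly as in Theorem \ref{ThTf}, for a suitable exponent $\delta$ one has $g\le[M(|g|^{1/\delta})]^{\delta}=:\omega\in A_1$; since $\lambda_0$ is the conjugate of the integrability exponent of $\Omega$, Lemma \ref{ProTo} is applicable on $L^{\lambda_0}_\omega$ at its borderline case (which requires precisely $\omega\in A_1$), so $\int|T_\Omega f|^{\lambda_0}\omega\le C\int|f|^{\lambda_0}\omega$. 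Then the amalgam H\"older inequality (Lemma \ref{Holder}), pairing $|f|^{\lambda_0}$ with $\omega$, followed by the boundedness of $M$ on the dual amalgam space $[(L^r_w,L^s_v)_t]'$ (Theorem \ref{ThMf}) to absorb the factor coming from $\omega$, and finally undoing the powers, yields $\|T_\Omega f\|_{(L^p_w,L^q_v)_t}\le C\|f\|_{(L^p_w,L^q_v)_t}$.

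For part (b), with $q=1$ and $v\in A_1$, the same steps are carried out at the endpoint: given $\mu>0$ one writes, via Lemma \ref{DuiO}, $\|\chi_{\{x:|T_\Omega f(x)|>\mu\}}\|_{(L^p_w,L^1_v)_t}=\int_{\mathbb R^n}\chi_{\{x:|T_\Omega f(x)|>\mu\}}(x)g(x)\,dx$ for some $g$ in the unit ball of $(L^{p'}_{w'},L^\infty_{v'})_t$, dominates $g\le\omega:=[M(|g|^{1/\delta})]^\delta\in A_1$, applies the weighted weak $(1,1)$ bound for $T_\Omega$ from Lemma \ref{ProTo} to obtain $\mu\|\chi_{\{x:|T_\Omega f(x)|>\mu\}}\|_{(L^p_w,L^1_v)_t}\le C\int_{\mathbb R^n}|f(x)|\omega(x)\,dx$, takes the supremum over $\mu>0$, and finishes with Lemma \ref{Holder} together with the boundedness of $M$ on $(L^{p'}_{w'},L^\infty_{v'})_t$ (Theorem \ref{ThMf}(a) with second index $\infty$).

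The main obstacle I anticipate is the bookkeeping of the Muckenhoupt classes in part (a). The un-powering exponent must at once be large enough for Lemma \ref{ProTo} to be used on $L^{\lambda_0}$ with an $A_1$ weight (forcing $\lambda_0\ge\min\{\theta',\gamma'\}$) and small enough that $w\in A_{p/\theta'}\subset A_{p/\lambda_0}$ and $v\in A_{q/\gamma'}\subset A_{q/\lambda_0}$, which is what Theorem \ref{ThMf} needs for $M$ on $(L^r_w,L^s_v)_t$ and on its dual (forcing $\lambda_0\le\min\{\theta',\gamma'\}$); hence $\lambda_0=\min\{\theta',\gamma'\}$ is the only workable choice, and it is what obliges us to have $\Omega\in L^\theta\cap L^\gamma$. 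The remaining delicate point is the borderline $p=\theta'$ or $q=\gamma'$, where $r$ or $s$ collapses to $1$ and Theorem \ref{ThMf} does not directly cover the corresponding index of the dual space; this should be dealt with by a separate endpoint argument (or, when extra integrability of $\Omega$ is available, by opening up the $A_p$ classes).
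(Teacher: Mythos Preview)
Your plan is precisely the one the paper itself adopts: at the start of Section~4 the authors write that ``their simple proofs use Theorem~\ref{ThMf} and the boundedness of the operator on $L^p_w(\mathbb R^n)$ and is omitted,'' so the intended argument is a verbatim rerun of the proof of Theorem~\ref{ThTf} with Lemma~\ref{ProTo} replacing Lemma~\ref{ProTf}. In that sense your proposal matches the paper, and your treatment of part~(b) is exactly the endpoint argument of Theorem~\ref{ThTf}(b).

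There is, however, a genuine slip in your bookkeeping for part~(a). The phrase ``assuming (without loss of generality) that $\Omega\in L^\theta\cap L^\gamma$'' is not without loss of generality. On the compact sphere $\mathbb S^{n-1}$ one has $L^\theta\subset L^\gamma$ only when $\theta\ge\gamma$; if instead $\gamma>\theta$ (equivalently $\gamma'<\theta'$), the hypothesis $\Omega\in L^\theta$ does \emph{not} yield $\Omega\in L^\gamma$, and your choice $\lambda_0=\min\{\theta',\gamma'\}=\gamma'<\theta'$ then lies outside the range where Lemma~\ref{ProTo} is available. This is exactly the obstruction you yourself diagnose at the end: the un-powering exponent must satisfy $\lambda_0\ge\theta'$ for Lemma~\ref{ProTo} and $\lambda_0\le\gamma'$ for the weight $v\in A_{q/\gamma'}$ to sit in $A_{q/\lambda_0}$, and these are compatible only when $\gamma'\ge\theta'$, i.e.\ $\gamma\le\theta$. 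The paper, having omitted the proof, does not confront this either; the clean resolutions are to add the harmless restriction $\gamma\le\theta$ to the hypotheses (so that $\lambda_0=\theta'$ works throughout), or to invoke the dual weighted estimate for $T_\Omega$ in the range $1<p\le\theta$ to cover the regime $\gamma'\le q<\theta'$. Your flagged endpoint cases $p=\theta'$ or $q=\gamma'$ are a separate issue and, as you note, require the usual $A_p$ self-improvement to open up the exponents.
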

\begin{lemma}\label{Promu}\cite{refmu}
For $\Omega\in L^\theta(\mathbb S^{n-1})$ and $1<\theta\leq\infty$, if $p$, $\theta$, $w$
satisfy one of the following conditions:
\begin{enumerate}[(i).]
         \item $\theta'<p<\infty$, and $w\in A_{p/\theta'}$,
         \item $1<p<\theta$, and $w^{1-\theta'}\in A_{p'/\theta'}$,
         \item $1<p<\infty$, and $w\in A_{p}$.
\end{enumerate}
Then there exist constant $C>0$ such that
$$
\|\mu_\Omega (f)\|_{L^p_w(\mathbb R^n)}\leq C\|f\|_{L^p_w(\mathbb R^n)}.
$$
If $p=1$, $w\in A_{1}$, then there exist a positive constant $C$ such that
$$
\|\mu_\Omega (f)\|_{L^{1,\infty}_w(\mathbb R^n)}\leq C\|f\|_{L^1_w(\mathbb R^n)}.
$$
\end{lemma}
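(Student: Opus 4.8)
The plan is to recognise $\mu_\Omega$ as (the norm of) a vector-valued singular integral and then feed it through the two workhorses of the rough-kernel theory: the vector-valued Calder\'on--Zygmund theory with $A_p$ weights and Duoandikoetxea's dyadic decomposition for kernels in $L^\theta(\mathbb S^{n-1})$. Put $\mathcal H:=L^2\bigl((0,\infty),\diff t/t^3\bigr)$ and, for $x\neq0$, $\vec K(x):=\bigl(|x|^{1-n}\Omega(x)\chi_{\{|x|\le t\}}\bigr)_{t>0}\in\mathcal H$, so that $\mu_\Omega f(x)=\|\vec K*f(x)\|_{\mathcal H}$ and, by a short computation, $\|\vec K(x)\|_{\mathcal H}=2^{-1/2}|x|^{-n}|\Omega(x)|$ --- the prototype of a rough vector-valued Calder\'on--Zygmund kernel. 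The first step is the $L^2$ bound: by Plancherel, $\|\mu_\Omega f\|_{L^2}^2=\int_{\R^n}\bigl(\int_0^\infty|m_t(\xi)|^2\,\diff t/t^3\bigr)|\widehat f(\xi)|^2\,\diff\xi$ with $m_t(\xi)=\int_{|y|\le t}|y|^{1-n}\Omega(y)e^{-2\pi i y\cdot\xi}\,\diff y$; the cancellation \eqref{Proome} gives $|m_t(\xi)|\lesssim\|\Omega\|_{L^1(\mathbb S^{n-1})}\,t^2|\xi|$ when $t|\xi|\le1$, while $\Omega\in L^\theta(\mathbb S^{n-1})$ with $\theta>1$ supplies a power decay $|m_t(\xi)|\lesssim t\,(t|\xi|)^{-\delta_0}$ when $t|\xi|\ge1$; together these make $\int_0^\infty|m_t(\xi)|^2\,\diff t/t^3$ bounded uniformly in $\xi$, that is, $\mu_\Omega\colon L^2\to L^2$.

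Next I would split $\vec K$ by a smooth radial partition of unity, $\vec K=\sum_{j\in\mathbb Z}\vec K_j$ with $\mathrm{supp}\,\vec K_j\subset\{2^{j-1}<|x|<2^{j+1}\}$, set $\mu_{\Omega,j}f:=\|\vec K_j*f\|_{\mathcal H}$, and record for each block the Fourier estimate $\|\widehat{\vec K_j}(\xi)\|_{\mathcal H}\lesssim\|\Omega\|_{L^\theta(\mathbb S^{n-1})}\min\{(2^j|\xi|)^{\delta_1},(2^j|\xi|)^{-\delta_1}\}$ together with the size estimate $\bigl\|\,\|\vec K_j\|_{\mathcal H}\,\bigr\|_{L^1(\R^n)}\lesssim\|\Omega\|_{L^\theta(\mathbb S^{n-1})}$. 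For case (iii), $w\in A_p$ with $1<p<\infty$: recombining the blocks into frequency-localised pieces that satisfy a uniform vector-valued H\"ormander condition, the weighted vector-valued Calder\'on--Zygmund theory (the Benedek--Calder\'on--Panzone theorem together with its extension to $A_p$ weights) applies to each piece, and the sum over $j$ is legitimate on $L^p_w$ by the weighted Littlewood--Paley inequality, itself a consequence of the weighted maximal bound quoted as Lemma \ref{ProMf}. For cases (i) and (ii) I would instead run Duoandikoetxea's scheme: interpolating the $L^2$ bound against the elementary $L^{\theta'}$-type bound for $\mu_{\Omega,j}$ and applying the Kurtz--Wheeden argument produces an $L^p_w$ estimate for $\mu_{\Omega,j}$ with a constant decaying geometrically in $|j|$ whenever $w\in A_{p/\theta'}$, which sums to give (i); case (ii) then follows from (i) by a duality/linearisation argument on the $\mathcal H$-valued operator, with $w^{1-\theta'}$ appearing as the dual $A_{p'/\theta'}$ weight.

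Finally, for the weak $(1,1)$ estimate with $w\in A_1$, I would Calder\'on--Zygmund-decompose $f$ at height $\lambda$ with respect to the doubling measure $w\,\diff x$: $f=g+b$, $b=\sum_k b_k$ with $b_k$ supported in $Q_k$, $\int b_k=0$, and $\sum_k w(Q_k)\lesssim\lambda^{-1}\|f\|_{L^1_w}$. The good part is absorbed by the $L^2_w$ bound from the previous step. The bad part is the main obstacle: because $\Omega$ lies only in $L^\theta(\mathbb S^{n-1})$, the usual kernel-regularity estimate $\int_{(2Q_k)^c}\|\vec K(x-y)-\vec K(x-y_k)\|_{\mathcal H}\,w(x)\,\diff x\lesssim1$ is unavailable, so one must decompose $\mu_\Omega b_k$ along the dyadic blocks, separate the scales comparable to $\ell(Q_k)$ from the rest, and control the far contribution through the $L^{\theta'}$-improving property of the blocks combined with the reverse-H\"older and $A_\infty$ properties of $w$, in the spirit of the Christ--Seeger treatment of rough singular integrals. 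Making this rough bad-part estimate run against an $A_1$ weight is the crux; once the decompositions are in place, the strong-type inequalities and the underlying $L^2$ theory are comparatively routine.
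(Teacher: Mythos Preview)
The paper does not prove this lemma at all: it is cited from \cite{refmu} and stated without argument, to be used as a black-box input for the extrapolation machinery in Section~\ref{s4}. So there is no ``paper's own proof'' to compare your proposal against.

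That said, your outline is essentially the strategy of the cited reference. Realising $\mu_\Omega$ as the $\mathcal H$-norm of a vector-valued convolution with $\mathcal H=L^2((0,\infty),\diff t/t^3)$, obtaining the $L^2$ bound from Plancherel together with the cancellation \eqref{Proome}, decomposing the kernel dyadically in the radial variable, and then summing the blocks by interpolating Fourier decay against the trivial $L^{\theta'}$-type estimates under $A_{p/\theta'}$ weights --- this is precisely the Ding--Fan--Pan scheme for case~(i), with (ii) following by the duality you indicate. Your identification of the weighted weak-$(1,1)$ as the delicate step is also accurate: the kernel has no pointwise H\"ormander regularity, so the bad part of the Calder\'on--Zygmund decomposition must be handled block by block via $L^{\theta'}$-improving and the $A_\infty$ properties of $w$, as you sketch. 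In short, you are reconstructing the proof the paper outsources; nothing is missing relative to what the paper itself provides.
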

\begin{theorem}\label{Thmu}
Let $0<t<\infty$, $\Omega\in L^\theta(\mathbb S^{n-1})$, $1<p<\infty$ and $w\in A_p$. \\
(a). If $1<q<\infty$ and $v\in A_q$, then there exist constant $C>0$ such that
$$
||\mu_\Omega(f)||_{(L_w^p,L_v^q)_t(\mathbb R^n)}\leq C ||f||_{(L_w^p,L_v^q)_t(\mathbb R^n)}.
$$
(b). If $q=1$ and $v\in A_1$, then there exist constant $C>0$ such that
$$
||\mu_\Omega(f)||_{W(L_w^p,L_v^1)_t(\mathbb R^n)}\leq  C||f||_{(L_w^p,L_v^1)_t(\mathbb R^n)}.
$$
The universal positive constant $C$ is independent of $f$ and $t$.
\end{theorem}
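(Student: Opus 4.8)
The plan is to repeat, essentially word for word, the proof of Theorem \ref{ThTf}, replacing the Calder\'on--Zygmund operator $T$ by the Marcinkiewicz integral $\mu_\Omega$ and Lemma \ref{ProTf} by Lemma \ref{Promu}(iii). Indeed, that argument used only three features of $T$: it is (sub)linear, it is bounded on $L^p_\omega(\mathbb R^n)$ for every $1<p<\infty$ and $\omega\in A_p$, and it maps $L^1_\omega(\mathbb R^n)$ into $L^{1,\infty}_\omega(\mathbb R^n)$ for $\omega\in A_1$. The operator $\mu_\Omega$ is sublinear, and Lemma \ref{Promu}(iii) together with the weak-type statement in Lemma \ref{Promu} supplies exactly these scalar weighted bounds; note that clause (iii) asks nothing of $\Omega$ beyond $\Omega\in L^\theta(\mathbb S^{n-1})$ for some $\theta>1$ (and the cancellation \eqref{Proome} built into the definition of $\mu_\Omega$), which is the standing hypothesis of Theorem \ref{Thmu}.

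For part (a) I would fix $1<p,q<\infty$, $w\in A_p$, $v\in A_q$, and choose $\kappa>1$ and $r,s>1$ with $p=r\kappa$, $q=s\kappa$, so that $q/p=s/r$ (here $\kappa$ plays the role of the exponent called $\theta$ in the proof of Theorem \ref{ThTf}, renamed to avoid the clash with $\Omega\in L^\theta(\mathbb S^{n-1})$). Then, via Lemma \ref{DuiO}, I would write $\|\mu_\Omega f\|_{(L^p_w,L^q_v)_t(\mathbb R^n)}=\||\mu_\Omega f|^\kappa\|_{(L^r_w,L^s_v)_t(\mathbb R^n)}^{1/\kappa}=\left(\int_{\mathbb R^n}|\mu_\Omega f(x)|^\kappa g(x)\,dx\right)^{1/\kappa}$ for a suitable $g$ in the unit ball of $(L^{r'}_{w'},L^{s'}_{v'})_t(\mathbb R^n)$, dominate $g\le[M(|g|^{1/\gamma})]^\gamma=:\omega$ pointwise, note $\omega\in A_1\subset A_\kappa$, apply Lemma \ref{Promu}(iii) to pass $|\mu_\Omega f|^\kappa$ to $|f|^\kappa$ against $\omega$, and close with H\"older's inequality (Lemma \ref{Holder}) together with the boundedness of $M$ on $(L^{r'}_{w'},L^{s'}_{v'})_t(\mathbb R^n)$ from Theorem \ref{ThMf}, which controls $\|\omega\|_{(L^{r'}_{w'},L^{s'}_{v'})_t(\mathbb R^n)}$ by $\|g\|_{(L^{r'}_{w'},L^{s'}_{v'})_t(\mathbb R^n)}\le1$ --- exactly as in Theorem \ref{ThTf}. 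For part (b) I would run the weak-type counterpart: for fixed $\lambda>0$, use Lemma \ref{DuiO} to write $\lambda\|\chi_{\{|\mu_\Omega f|>\lambda\}}\|_{(L^p_w,L^1_v)_t(\mathbb R^n)}=\lambda\int_{\mathbb R^n}\chi_{\{|\mu_\Omega f|>\lambda\}}(x)g(x)\,dx$ with $g\in(L^{p'}_{w'},L^\infty_{v'})_t(\mathbb R^n)$, dominate $g$ by $\omega=[M(|g|^{1/\gamma})]^\gamma\in A_1$, invoke the weighted weak $(1,1)$ estimate of Lemma \ref{Promu} to obtain $\lambda\,\omega(\{x:|\mu_\Omega f(x)|>\lambda\})\le C\int_{\mathbb R^n}|f(x)|\omega(x)\,dx$, take the supremum over $\lambda>0$, and finish with Lemma \ref{Holder} and the boundedness of $M$ on $(L^{p'}_{w'},L^\infty_{v'})_t(\mathbb R^n)$.

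The single point to watch, relative to Theorem \ref{ThTf}, is that $\mu_\Omega$ is only sublinear rather than linear; but the duality scheme above never exploits linearity --- it uses only that $\mu_\Omega f\ge0$ pointwise and the scalar weighted inequalities --- so nothing changes, and I do not anticipate a genuine obstacle. The one thing worth stating carefully is the choice of clause (iii) of Lemma \ref{Promu}: it is the one that holds for all $1<p<\infty$, $w\in A_p$ (with the weak endpoint at $w\in A_1$), which is precisely what the extrapolation-by-duality machinery consumes, and it is why the hypotheses of Theorem \ref{Thmu} impose no relation between $p,q$ and $\theta'$.
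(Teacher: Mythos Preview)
Your proposal is correct and is exactly the approach the paper takes: the authors state explicitly that the proofs of the theorems in Section~\ref{s4}, including Theorem~\ref{Thmu}, ``use Theorem~\ref{ThMf} and the boundedness of the operator on $L^p_w(\mathbb R^n)$'' and are omitted, i.e., one repeats the proof of Theorem~\ref{ThTf} with Lemma~\ref{Promu} in place of Lemma~\ref{ProTf}. Your observations that clause (iii) of Lemma~\ref{Promu} is the relevant one (placing no restriction linking $p,q$ to $\theta'$) and that only sublinearity of $\mu_\Omega$ is needed are apt and make the argument slightly more careful than the paper's own discussion.
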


We also define the Bochner-Riesz operator of order $\delta>0$ in terms of Fourier transform by
$$
(T^\delta_R f)^\wedge(\xi)=\lt(1-\frac{|\xi|^2}{R^2}\rt)^\delta_{+}\hat{f}(\xi),
$$
where $\hat{f}$ denote the Fourier transform of $f$. These operators can be defined by
$$
T^\delta_R f(x)=(f*\phi_{1/R})(x),
$$
where $\phi(x)=[(1-|\cdot|^2)^\delta_{+}]^\vee(x)$, and $f^\vee$ is the inverse Fourier transform of
$f$.
\par The associate maximal operator is defined by
$$
T^\delta_*f(x)=\underset{R>0}{\sup}|T^\delta_Rf(x)|.
$$
\begin{lemma}\label{ProT*}\cite{refT*1,refT*2,refT*3}
Let $n\ge 2$. If $1<p<\infty$ and $w\in A_p$, then there exist constant $C$ such that
$$
\lt(\int_{\mathbb R^n}|T^{(n-1)/2}_*f(x)|^qw(x)dx\rt)^\frac{1}{q}
\leq C\lt(\int_{\mathbb R^n}|f(x)|^qw(x)dx\rt)^\frac{1}{q}.
$$
For any $\alpha,R>0$, if $p=1$, $w\in A_{1}$,
then there exist a constant $C$ such that
$$
w(\{x\in \mathbb R^n:T^{(n-1)/2}_R(f)(x)>\alpha\})\le \frac{C}{\alpha}\int_{\mathbb R^n}|f(x)|w(x)dx.
$$
The positive constant $C$ is independent of $f$ and $\alpha$.
\end{lemma}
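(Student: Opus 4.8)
This records the known weighted estimates for the Bochner--Riesz means at the critical index $\delta=(n-1)/2$, so I only outline the strategy; the complete argument is in \cite{refT*1,refT*2,refT*3}. The plan is to reduce everything to operators whose weighted mapping properties are already in hand through Lemmas \ref{ProMf} and \ref{ProTf}, namely the Hardy--Littlewood maximal operator and Calder\'on--Zygmund singular integrals together with their maximal truncations. The first step is a smooth dyadic decomposition of the multiplier $m(\xi)=(1-|\xi|^2)^{(n-1)/2}_+$ adapted to the unit sphere: write $m=m_0+\sum_{j\ge1}m_j$, where $m_0$ is supported in $\{|\xi|\le3/4\}$ and, for $j\ge1$, $m_j$ is supported in the annulus $\{1-2^{-j+1}\le|\xi|\le1-2^{-j-1}\}$; since $m$ vanishes to order $(n-1)/2$ on $\{|\xi|=1\}$ one gets $\|m_j\|_\infty\lesssim2^{-j(n-1)/2}$ with the matching derivative bounds. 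The convolution kernel of $m_0$ is Schwartz, hence has an integrable radially decreasing majorant, so the maximal dilation operator it generates is pointwise $\lesssim Mf$ and is controlled by Lemma \ref{ProMf}.

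For the tail $\sum_{j\ge1}m_j$ the decisive input is the classical stationary-phase estimate $|\widehat{d\sigma}(x)|\lesssim(1+|x|)^{-(n-1)/2}$ for the sphere: although the individual kernels $K_j$ of the $m_j$ have $L^1$ norms that grow with $j$, the oscillation it quantifies makes the full kernel $K=\sum_{j\ge1}K_j$ a genuine Calder\'on--Zygmund kernel, with $|K(x)|\lesssim|x|^{-n}$ away from the origin and a H\"ormander-type regularity estimate. For the fixed-dilation operator $T^{(n-1)/2}_R$ this is exactly the structure to which weighted Calder\'on--Zygmund theory (cf. Lemma \ref{ProTf}) applies; for the maximal operator $T^{(n-1)/2}_*$ one needs in addition a square-function estimate dominating $\sup_R$ by an $\ell^2$-combination of appropriately truncated pieces with uniformly controlled weighted constants. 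Assembling the $m_0$ and tail contributions yields the strong $(q,q)$ bound for every $w\in A_q$, $1<q<\infty$.

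For the weak-type $(1,1)$ inequality, which concerns only the single operator $T^{(n-1)/2}_R$, I would perform a weighted Calder\'on--Zygmund decomposition of $f$ at height $\alpha$: the good part is handled by the $L^2_w$ bound just obtained, and the bad part by cancellation against the H\"ormander regularity of $K$. The main obstacle throughout is the endpoint nature of the index $\delta=(n-1)/2$: the kernel decays merely like $|x|^{-n}$, so there is no slack either in summing the dyadic pieces for the strong bound or in verifying the H\"ormander condition for the weak bound, and this is precisely where the hypothesis $n\ge2$ and the exact value of the index are used; these delicate points are carried out in \cite{refT*1,refT*2,refT*3}.
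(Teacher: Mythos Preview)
The paper does not supply a proof of Lemma~\ref{ProT*}; it is stated as a known result with citations to \cite{refT*1,refT*2,refT*3} and is used only as input for the extrapolation machinery in Section~\ref{s4}. There is therefore nothing in the paper to compare your outline against. Your sketch of the dyadic multiplier decomposition near the sphere, the stationary-phase bound on $\widehat{d\sigma}$, and the reduction to Calder\'on--Zygmund theory is a reasonable summary of the standard route (essentially that of \cite{refT*1}), and you correctly flag that the full details live in the references. For the purposes of this paper no more is needed.
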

\begin{theorem}\label{ThT*}
Suppose that $0<t<\infty$, $1<p<\infty$, $w\in {{A}_{p}}$.\\
(a). If $1<q<\infty$, and $v\in A_q$, then we have
$$
||T^{(n-1)/2}_*(f)||_{(L_w^p,L_v^q)_t(\mathbb R^n)}\leq C ||f||_{(L_w^p,L_v^q)_t(\mathbb R^n)}.
$$
(b). For any $R>0$, if $q=1$ and $v\in A_1$, then we have
$$
||T^{(n-1)/2}_R(f)||_{W(L_w^p,L_v^1)_t(\mathbb R^n)}\leq  C||f||_{(L_w^p,L_v^1)_t(\mathbb R^n)}.
$$
The universal constant $C>0$ is independent of $f$ and $t$.
\end{theorem}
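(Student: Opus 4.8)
The plan is to repeat the proof of Theorem \ref{ThTf} essentially verbatim: as noted in the Further Remarks, that argument uses nothing about the Calder\'on--Zygmund operator beyond (i) the boundedness of $M$ on $(L^p_w,L^q_v)_t(\mathbb R^n)$ from Theorem \ref{ThMf} and (ii) the weighted $L^p_w$ boundedness of the operator. For Theorem \ref{ThT*} the second ingredient is supplied by Lemma \ref{ProT*}, so the proof should go through with only cosmetic changes.

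For part (a), I would first choose an exponent $\theta$ with $1<\theta<\min(p,q)$ and set $r:=p/\theta>1$, $s:=q/\theta>1$, so that $p=r\theta$ and $q=s\theta$. Applying the duality identity of Lemma \ref{DuiO} to the nonnegative function $|T^{(n-1)/2}_* f|^\theta$, there is $g$ in the unit ball of $(L^{r'}_{w'},L^{s'}_{v'})_t(\mathbb R^n)$ with $\|T^{(n-1)/2}_* f\|_{(L^p_w,L^q_v)_t}^\theta=\int_{\mathbb R^n}|T^{(n-1)/2}_* f|^\theta g$. Next comes the Rubio de Francia trick: fix $\gamma>1$, replace $g$ by the larger weight $\sigma:=[M(|g|^{1/\gamma})]^\gamma\in A_1$, and use Lemma \ref{ProT*} to get $\int|T^{(n-1)/2}_* f|^\theta\sigma\le C\int|f|^\theta\sigma$. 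Finally, Lemma \ref{Holder} bounds $\int|f|^\theta\sigma$ by $\||f|^\theta\|_{(L^r_w,L^s_v)_t}\|\sigma\|_{(L^{r'}_{w'},L^{s'}_{v'})_t}$, and since $M$ is bounded on $(L^{r'}_{w'},L^{s'}_{v'})_t(\mathbb R^n)$ by Theorem \ref{ThMf} we have $\|\sigma\|_{(L^{r'}_{w'},L^{s'}_{v'})_t}\lesssim\|g\|_{(L^{r'}_{w'},L^{s'}_{v'})_t}\le1$; undoing the $\theta$-th power and using $\||f|^\theta\|_{(L^r_w,L^s_v)_t}^{1/\theta}=\|f\|_{(L^p_w,L^q_v)_t}$ gives (a).

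For part (b) I would fix $R>0$ and $\lambda>0$, use Lemma \ref{DuiO} to select $g$ in the unit ball of $(L^{p'}_{w'},L^\infty_{v'})_t(\mathbb R^n)$ with $\|\chi_{\{|T^{(n-1)/2}_R f|>\lambda\}}\|_{(L^p_w,L^1_v)_t}=\int_{\mathbb R^n}\chi_{\{|T^{(n-1)/2}_R f|>\lambda\}}g$, dominate $g$ by $\sigma=[M(|g|^{1/\gamma})]^\gamma\in A_1$, and apply the weighted weak-type half of Lemma \ref{ProT*} to bound $\lambda\,\sigma(\{|T^{(n-1)/2}_R f|>\lambda\})$ by $C\int|f|\sigma$. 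Taking the supremum over $\lambda>0$ and then invoking Lemma \ref{Holder} together with the boundedness of $M$ on $(L^{p'}_{w'},L^\infty_{v'})_t(\mathbb R^n)$ (Theorem \ref{ThMf}), exactly as in the proof of Theorem \ref{ThTf}, yields $\|T^{(n-1)/2}_R f\|_{W(L^p_w,L^1_v)_t}\le C\|f\|_{(L^p_w,L^1_v)_t}$ with $C$ independent of $R$ and $t$.

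The only point that requires care is the endpoint asymmetry: Lemma \ref{ProT*} gives a weighted \emph{strong} bound only for the maximal operator $T^{(n-1)/2}_*$ in the range $q>1$, whereas at $q=1$ it provides merely a weighted weak $(1,1)$ bound for the single operator $T^{(n-1)/2}_R$ — which is exactly why part (b) is stated for $T^{(n-1)/2}_R$ rather than for $T^{(n-1)/2}_*$. Since the constant in Lemma \ref{ProT*} does not depend on $R$, no uniformity issue arises, and everything else is a mechanical transcription of the argument for Theorem \ref{ThTf}.
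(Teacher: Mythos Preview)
Your proposal is correct and follows exactly the approach the paper intends: the paper explicitly omits the proof of this theorem, noting in the opening of Section~\ref{s4} that it is obtained by repeating the argument of Theorem~\ref{ThTf} with the weighted $L^p_w$ input replaced by Lemma~\ref{ProT*}. Your write-up faithfully carries this out, and your remark about the endpoint asymmetry (strong bound for $T^{(n-1)/2}_*$ when $q>1$, weak bound only for $T^{(n-1)/2}_R$ when $q=1$) correctly explains the form of the statement.
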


Suppose that $\varphi(x)\in L^1(\mathbb R^n)$ satisfies
\begin{equation}\label{Eqphi}
\int_{\mathbb R^n}\varphi(x)dx=0.
\end{equation}
The generalized Littlewood-Paley $g$ function $g_\varphi$ is defined by
$$
g_\varphi(f)=\lt(\int_{0}^{\infty}|\varphi_t*f(x)|^2\frac{dt}{t}\rt)^\frac12,
$$
where $\varphi_t(x)=\frac{1}{t^n}\varphi(\frac xt)$.

\begin{lemma}\label{Prog}\cite{refgf}
Suppose that $\varphi\in L^1(\mathbb R^n)$ satisfies (\ref{Eqphi}) and the following condition
\begin{equation}\label{phi2}
|\varphi(x)|\leq\frac C{(1+|x|)^{n+1}}
\end{equation}
and
\begin{equation}\label{phi3}
|\nabla\varphi(x)|\leq\frac C{(1+|x|)^{n+2}}.
\end{equation}
If $1<p<\infty$, $w\in A_p$, then there exist constant $C>0$ such that
$$
\|g_\varphi(f)\|_{L^p_w(\mathbb R^n)}\leq C\|f\|_{L^p_w(\mathbb R^n)}.
$$
If $p=1$ and $w\in A_1$, then there exist constant $C>0$ such that
$$
\|g_\varphi(f)\|_{L^{1,\infty}_w(\mathbb R^n)}\leq C\|f\|_{L^1_w(\mathbb R^n)}.
$$
\end{lemma}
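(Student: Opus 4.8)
\medskip
\noindent\textbf{Proof sketch of Lemma \ref{Prog}.}
The plan is to realize $g_\varphi$ as the pointwise Hilbert--space norm of a vector--valued Calder\'on--Zygmund operator, so that the weighted estimates reduce to the classical (vector--valued) Calder\'on--Zygmund machinery together with the extrapolation Lemma \ref{waicha}. Let $H:=L^2((0,\infty),\diff t/t)$ and set $\mathcal Gf(x):=\{\varphi_t*f(x)\}_{t>0}$, an $H$--valued function with $g_\varphi(f)(x)=\|\mathcal Gf(x)\|_{H}$ and $H$--valued convolution kernel $K(x):=\{\varphi_t(x)\}_{t>0}$ on $\mathbb R^n\setminus\{0\}$. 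It then suffices to check: (i) $\mathcal G$ is bounded from $L^2(\mathbb R^n)$ into $L^2_H(\mathbb R^n)$; (ii) $K$ satisfies the size and H\"ormander regularity estimates in the $H$--norm; (iii) (i)--(ii) yield the stated bounds for all $A_p$ weights and the $A_1$ weak endpoint.

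First I would prove (i). By Plancherel's theorem and Fubini,
$$
\|g_\varphi f\|_{L^2(\mathbb R^n)}^2=\int_{\mathbb R^n}|\widehat f(\xi)|^2\left(\int_0^\infty|\widehat\varphi(t\xi)|^2\frac{\diff t}{t}\right)\diff\xi ,
$$
so everything hinges on showing $M(\xi):=\int_0^\infty|\widehat\varphi(t\xi)|^2\,\diff t/t$ is a finite constant independent of $\xi$. The cancellation \eqref{Eqphi} gives $\widehat\varphi(0)=0$; splitting the integral defining $\widehat\varphi(\xi)$ at $|x|=|\xi|^{-1}$ and using \eqref{phi2} gives $|\widehat\varphi(\xi)|\lesssim|\xi|^{1-\varepsilon}$ for $|\xi|\le1$ and any fixed $\varepsilon\in(0,1)$, while one integration by parts, legitimate since $\nabla\varphi\in L^1(\mathbb R^n)$ by \eqref{phi3}, gives $|\widehat\varphi(\xi)|\lesssim|\xi|^{-1}$ for $|\xi|\ge1$. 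The change of variables $s=t|\xi|$ then shows $M(\xi)=\int_0^\infty|\widehat\varphi(s\xi/|\xi|)|^2\,\diff s/s\le C<\infty$, so $\|g_\varphi f\|_{L^2}\le C\|f\|_{L^2}$.

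Next I would verify (ii): substituting $u=|x|/t$ in $\|K(x)\|_{H}^2=\int_0^\infty t^{-2n}|\varphi(x/t)|^2\,\diff t/t$ and using \eqref{phi2} gives $\|K(x)\|_{H}\lesssim|x|^{-n}$, and for $|x|\ge2|h|$ the mean value theorem together with \eqref{phi3} gives $|\varphi_t(x)-\varphi_t(x-h)|\lesssim|h|\,t^{-n-1}(1+|x|/t)^{-n-2}$, whence $\|K(x)-K(x-h)\|_{H}\lesssim|h|\,|x|^{-n-1}$ after integrating in $t$; the analogous bound in the first variable follows by symmetry. Thus $K$ is a standard $H$--valued Calder\'on--Zygmund kernel (regularity exponent $\delta=1$). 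For (iii), (i)--(ii) and the Calder\'on--Zygmund decomposition give the unweighted weak type $(1,1)$ bound for $\mathcal G$, and the standard weighted Calder\'on--Zygmund theory---exactly as in the scalar case recalled in Lemma \ref{ProTf}---upgrades this to boundedness on $L^2_w(\mathbb R^n)$ for every $w\in A_2$ and to weak type $(1,1)$ for every $w\in A_1$; applying Lemma \ref{waicha} with $p_0=2$ to the family $\{(g_\varphi f,\,f)\}$ extends the strong bound to all $p\in(1,\infty)$ and all $w\in A_p$, which is precisely the claim.

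The hard part will be step (i), namely squeezing out of the mere pointwise bounds \eqref{phi2}--\eqref{phi3} and the cancellation \eqref{Eqphi} enough decay of $\widehat\varphi$ to make $M(\xi)$ finite; near the origin \eqref{phi2} yields only $|\widehat\varphi(\xi)|\lesssim|\xi|\log(1/|\xi|)$, since the first moment $\int|x||\varphi(x)|\,\diff x$ barely diverges, so one must settle for the slightly weaker $|\xi|^{1-\varepsilon}$, which is still enough. Once the $L^2$ bound is in hand, the kernel estimates, the vector--valued Calder\'on--Zygmund theory, and the extrapolation step are all routine.
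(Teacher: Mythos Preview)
The paper does not give a proof of Lemma~\ref{Prog}; it is quoted verbatim from the reference \cite{refgf} and used only as a black box to derive Theorem~\ref{Thg}, exactly as Lemmas~\ref{ProTo}, \ref{Promu}, \ref{ProT*} and \ref{ProSf} are quoted for the other operators in Section~\ref{s4}. So there is nothing in the paper to compare your argument against.

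That said, your sketch is the standard route and is essentially correct. A couple of small points worth cleaning up. First, $M(\xi)=\int_0^\infty|\widehat\varphi(t\xi)|^2\,\diff t/t$ is not literally ``a constant independent of $\xi$'': after the change of variables it still depends on $\xi/|\xi|$, and what you actually need (and prove) is the uniform bound $\sup_{\xi\neq0}M(\xi)<\infty$. Second, once (i) and (ii) are in place, the weighted vector-valued Calder\'on--Zygmund theory already yields $L^p_w$--boundedness for every $p\in(1,\infty)$ and $w\in A_p$ directly (via the sharp maximal function or good-$\lambda$ inequalities), so the appeal to Lemma~\ref{waicha} is permissible but not strictly necessary; this is also how the result is presented in \cite{refgf}. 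Neither remark affects the validity of your argument.
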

\begin{theorem}\label{Thg}
Suppose that $\varphi(x)$ satisfy (\ref{Eqphi}),(\ref{phi2}) and (\ref{phi3}),
for $1<p<\infty$ and $w\in A_p$. \\
(a). If $1<q<\infty$ and $v\in A_q$,
then there exist constant $C>0$ such that
$$
\|g_\varphi(f)\|_{(L_w^p,L_v^q)_t(\mathbb R^n)}\leq C ||f||_{(L_w^p,L_v^q)_t(\mathbb R^n)}.
$$
(b). If $q=1$ and $v\in A_1$, then there exist constant $C>0$, such that
$$
\|g_\varphi(f)\|_{W(L_w^p,L_v^1)_t(\mathbb R^n)}\leq C ||f||_{(L_w^p,L_v^1)_t(\mathbb R^n)}.
$$
The positive constant $C$ is independent of $f$ and $t$.
\end{theorem}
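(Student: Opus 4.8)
The plan is to run the argument of Theorem \ref{ThTf} essentially verbatim. That proof used nothing about $T$ beyond its sublinearity, its weighted $L^p_w$ and weak $L^1_w$ bounds, H\"older's inequality on the amalgam spaces (Lemma \ref{Holder}), duality (Lemma \ref{DuiO}), and the boundedness of $M$ on $(L^p_w,L^q_v)_t(\mathbb R^n)$ from Theorem \ref{ThMf}. For $g_\varphi$ the needed scalar bounds are exactly Lemma \ref{Prog}, so the hypotheses \eqref{Eqphi}, \eqref{phi2} and \eqref{phi3} on $\varphi$ will enter only through that lemma.

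For part (a), I would fix $\theta>1$ with $r:=p/\theta>1$ and $s:=q/\theta>1$, and use Lemma \ref{DuiO} to write, for a suitable nonnegative $g\in(L^{r'}_{w'},L^{s'}_{v'})_t(\mathbb R^n)$ of norm one,
$$
\|g_\varphi(f)\|_{(L^p_w,L^q_v)_t(\mathbb R^n)}=\bigl\||g_\varphi(f)|^\theta\bigr\|_{(L^r_w,L^s_v)_t(\mathbb R^n)}^{1/\theta}=\lt(\int_{\mathbb R^n}|g_\varphi(f)(x)|^\theta g(x)\,dx\rt)^{1/\theta}.
$$
Then I would fix $\gamma>1$, put $\omega:=[M(|g|^{1/\gamma})]^\gamma$, note that $\omega\in A_1$ and $g\le\omega$ pointwise, and apply Lemma \ref{Prog} with exponent $\theta$ and weight $\omega$ to get
$$
\|g_\varphi(f)\|_{(L^p_w,L^q_v)_t(\mathbb R^n)}\le\lt(\int_{\mathbb R^n}|g_\varphi(f)(x)|^\theta\omega(x)\,dx\rt)^{1/\theta}\le C\lt(\int_{\mathbb R^n}|f(x)|^\theta\omega(x)\,dx\rt)^{1/\theta}.
$$
H\"older's inequality (Lemma \ref{Holder}) in $(L^r_w,L^s_v)_t(\mathbb R^n)$ followed by the boundedness of $M$ on the dual space $(L^{r'}_{w'},L^{s'}_{v'})_t(\mathbb R^n)$ (Theorem \ref{ThMf}) then closes the estimate, just as for Theorem \ref{ThTf}.

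For part (b), I would use the weak-endpoint version of the same scheme. For each $\lambda>0$, Lemma \ref{DuiO} furnishes a nonnegative $g\in(L^{p'}_{w'},L^\infty_{v'})_t(\mathbb R^n)$ of norm one with
$$
\bigl\|\chi_{\{x\in\mathbb R^n:\,g_\varphi(f)(x)>\lambda\}}\bigr\|_{(L^p_w,L^1_v)_t(\mathbb R^n)}=\int_{\mathbb R^n}\chi_{\{x\in\mathbb R^n:\,g_\varphi(f)(x)>\lambda\}}(x)\,g(x)\,dx.
$$
Dominating $g$ by $\omega:=[M(|g|^{1/\gamma})]^\gamma\in A_1$ (with $\gamma>1$ fixed) and invoking the weak $(1,1)$ bound of Lemma \ref{Prog} gives
$$
\lambda\bigl\|\chi_{\{g_\varphi(f)>\lambda\}}\bigr\|_{(L^p_w,L^1_v)_t(\mathbb R^n)}\le\lambda\,\omega(\{x\in\mathbb R^n:\,g_\varphi(f)(x)>\lambda\})\le C\int_{\mathbb R^n}|f(x)|\,\omega(x)\,dx.
$$
Taking the supremum over $\lambda>0$ and then using Lemma \ref{Holder} together with the boundedness of $M$ on $(L^{p'}_{w'},L^\infty_{v'})_t(\mathbb R^n)$ yields $\|g_\varphi(f)\|_{W(L^p_w,L^1_v)_t(\mathbb R^n)}\le C\|f\|_{(L^p_w,L^1_v)_t(\mathbb R^n)}$.

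I do not anticipate a real obstacle: this is precisely the kind of statement that the ``Further Remarks'' section reduces, via Theorems \ref{ThMf} and \ref{ThTf}, to the known scalar weighted bounds. The only points needing a short verification are that the auxiliary exponents $\theta,r,s,\gamma$ can be chosen in the indicated ranges (possible because $p,q>1$), that the weight $\omega=[M(|g|^{1/\gamma})]^\gamma$ lies in $A_1$ with constant independent of $g$, and that $g_\varphi$ is sublinear so the pointwise domination by $\omega$ is legitimate; all three are routine.
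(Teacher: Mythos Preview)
Your proposal is correct and matches the paper's own approach exactly: the paper omits the proof of Theorem~\ref{Thg}, stating in Section~\ref{s4} that it follows the scheme of Theorem~\ref{ThTf} using Theorem~\ref{ThMf} together with the scalar weighted bounds of Lemma~\ref{Prog}. Your write-up simply makes that omitted argument explicit.
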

The intrinsic square functions were first introduced by Wilson\cite{refSf1,refSf2} which are defined as follows.
For $0<\alpha\leq1$, let $\mathscr C_\alpha$ be the family of functions $\varphi$ defined on
$\mathbb R^n$ such that $\varphi$ has support containing in $\{x\in\mathbb R^n :|x|\leq 1\}$
and satisfy (\ref{Eqphi}), for all $x, y\in\mathbb R^n$,
$$
|\varphi(x)-\varphi(y)|\leq |x-y|^\alpha.
$$
And for $(y,t)\in\mathbb R^{n+1}_{+}=\mathbb R^n\times(0,\infty)$, and $f\in L^1_{loc}(\mathbb R^n)$.
Let
$$
A_\alpha(f)(y,t)=\sup_{\varphi\in\mathscr C_\alpha}|f*\varphi_t(y)|
=\sup_{\varphi\in\mathscr C_\alpha}\lt|\int_{\mathbb R^n} \varphi_t(y-z)f(z)dz\rt|.
$$
Then we define the intrinsic square function of $f$ (of order $\alpha$) by
$$
\mathcal S_\alpha(f)(x)=\lt(\int\int_{\Gamma(x)}(A_\alpha(f)(y,t))^2\frac{dydt}{t^{n+1}}\rt)^\frac12,
$$
where
$
\Gamma(x)=\lt\{(y,t)\in\mathbb R^{n+1}_{+}:|x-y|<t\rt\}
$
and $\varphi_t(x)=\frac{1}{t^n}\varphi(\frac xt)$.
\begin{lemma}\label{ProSf}\cite{refSf1}
Let $0<\alpha\leq 1$, if $1<p<\infty$, and $w\in A_p$, then there exist constant $C>0$ such that
$$
\|\mathcal S_\alpha(f)\|_{L^p_w(\mathbb R^n)}\leq C\|f\|_{L^p_w(\mathbb R^n)}.
$$
If $p=1$, $w\in A_1$, then there exist $C>0$ such that
$$
\|\mathcal S_\alpha(f)\|_{L^{1,\infty}_w(\mathbb R^n)}\leq C\|f\|_{L^1_w(\mathbb R^n)}.
$$
\end{lemma}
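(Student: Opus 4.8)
The plan is to dominate the sublinear operator $\mathcal S_\alpha$ pointwise by a \emph{single, fixed} Littlewood--Paley square function of convolution type and then invoke the classical weighted theory for the latter, following Wilson's circle of ideas. Fix once and for all a radial $\psi\in C_c^\infty(\mathbb R^n)$ with $\mathrm{supp}\,\psi\subset\{|x|\le1\}$, $\int_{\mathbb R^n}\psi(x)\,dx=0$ and $\int_0^\infty\widehat\psi(s\xi)^2\,\frac{ds}{s}=1$ for $\xi\neq0$, so that the Calder\'on reproducing formula $h=\int_0^\infty h*\psi_s*\psi_s\,\frac{ds}{s}$ holds. Applying it with $h=\varphi_t$ and convolving against $f$ gives, for every $\varphi\in\mathscr C_\alpha$,
$$
f*\varphi_t(y)=\int_0^\infty\!\!\int_{\mathbb R^n}(\varphi_t*\psi_s)(y-u)\,(f*\psi_s)(u)\,du\,\frac{ds}{s}.
$$

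The heart of the matter is a kernel estimate \emph{uniform over the whole class $\mathscr C_\alpha$}: there is $C>0$, independent of $\varphi\in\mathscr C_\alpha$, $s,t>0$ and $u\in\mathbb R^n$, with
$$
|(\varphi_t*\psi_s)(u)|\le C\Big(\tfrac st\wedge\tfrac ts\Big)^{\alpha}(s\vee t)^{-n}\chi_{\{|u|\le 2(s\vee t)\}}.
$$
For $s\le t$ this is obtained from the cancellation $\int\psi=0$ together with the defining H\"older estimate $|\varphi(x)-\varphi(y)|\le|x-y|^\alpha$ of $\mathscr C_\alpha$; for $s\ge t$ from the cancellation $\int\varphi=0$ in \eqref{Eqphi} together with the smoothness of $\psi$; the support restriction is immediate since $\varphi$ and $\psi$ live in the unit ball. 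Inserting this into the reproducing formula and applying the Cauchy--Schwarz inequality in the measure $du\,\frac{ds}{s}$ (whose total mass $\int_0^\infty(\tfrac st\wedge\tfrac ts)^\alpha\frac{ds}{s}$ is a finite constant) bounds $A_\alpha(f)(y,t)^2$ pointwise by a weighted $L^2$-average of $|f*\psi_s(u)|^2$ with a constant free of $\varphi$; taking the supremum over $\mathscr C_\alpha$ therefore costs nothing.

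Integrating this bound over the cone $\Gamma(x)$ and using Fubini, a routine evaluation of the $(y,t)$-integral of the kernel yields
$$
\mathcal S_\alpha(f)(x)\ \lesssim\ \left(\int_0^\infty\!\!\int_{\mathbb R^n}K_s(x,u)\,|f*\psi_s(u)|^2\,du\,\frac{ds}{s}\right)^{1/2}
$$
with $K_s(x,u)\lesssim s^{-n}\chi_{\{|x-u|\le 3s\}}+s^\alpha|x-u|^{-n-\alpha}\chi_{\{|x-u|>3s\}}$; i.e. $\mathcal S_\alpha f$ is controlled by a fixed Lusin area integral plus a fixed $g^*_\lambda$-type piece ($\lambda>1$) built from the single kernel $\psi$. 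Both are Littlewood--Paley square functions with a Hilbert-space-valued Calder\'on--Zygmund kernel (values in $L^2(\mathbb R^{n+1}_{+},\frac{du\,ds}{s^{n+1}})$, satisfying the size and H\"ormander conditions) and are bounded on $L^2(\mathbb R^n)$ by Plancherel's theorem; hence, by the classical weighted Calder\'on--Zygmund theory, they are bounded on $L^p_w(\mathbb R^n)$ for all $1<p<\infty$, $w\in A_p$, and of weak type $(1,1)$ with respect to every $w\in A_1$ --- exactly the assertions recorded for $g_\varphi$ in Lemma \ref{Prog}. Combined with the pointwise domination this proves the lemma.

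The main obstacle is the uniform kernel estimate of the second paragraph: one must extract a \emph{positive} power of $(s\wedge t)/(s\vee t)$ that is simultaneously uniform over the infinite family $\mathscr C_\alpha$ and large enough for the $(y,t)$-integration of the third paragraph to converge, and this is the only place where both structural hypotheses on $\mathscr C_\alpha$ --- the cancellation \eqref{Eqphi} and the $\alpha$-H\"older modulus --- are genuinely needed and must be balanced against each other. Once that estimate is in hand, the removal of the supremum and the reduction to the fixed square function are mechanical, and the weighted bounds are standard.
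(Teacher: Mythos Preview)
The paper does not prove this lemma; it is quoted from Wilson~\cite{refSf1} without argument. Your sketch therefore attempts to supply a proof, and the opening moves---reproduce $\varphi_t$ through a fixed $\psi$ via Calder\'on's formula, establish the uniform convolution bound $|(\varphi_t*\psi_s)(u)|\lesssim(\tfrac st\wedge\tfrac ts)^{\alpha}(s\vee t)^{-n}\chi_{\{|u|\le 2(s\vee t)\}}$, then Cauchy--Schwarz and Fubini---are correct and are indeed how Wilson begins.

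The gap is in the final reduction. Your kernel $K_s(x,u)$ carries the tail $s^{\alpha}|x-u|^{-n-\alpha}$, so the ``$g^*_\lambda$-type piece'' has $\lambda=1+\alpha/n$; since $0<\alpha\le1$ this gives $\lambda\le 1+1/n<2$ whenever $n\ge 2$ (and $\lambda<2$ for $n=1$, $\alpha<1$). For $\lambda<2$ the function $g^*_\lambda$ is \emph{not} a vector-valued Calder\'on--Zygmund operator: already in the unweighted case it is bounded on $L^p(\mathbb R^n)$ only for $p>2/\lambda$ (Fefferman), and its weighted theory requires hypotheses on $w$ strictly stronger than $w\in A_p$. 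Hence your assertion that ``both are Littlewood--Paley square functions with a Hilbert-space-valued Calder\'on--Zygmund kernel'' fails for the tail term, and neither the strong bound for all $1<p<\infty$, $w\in A_p$, nor the weak $(1,1)$ for $w\in A_1$, follows from your domination. The exponent $\alpha$ in the tail is moreover sharp for this route, since it comes directly from the $\alpha$-H\"older modulus defining $\mathscr C_\alpha$; taking $\psi$ smoother or with more vanishing moments does not improve it. Wilson's argument avoids the $g^*_\lambda$ tail altogether: he first proves a local comparability $A_\alpha(f)(y,t)\approx A_\alpha(f)(y',t')$ whenever $|y-y'|\lesssim t$ and $t'\sim t$, and from this obtains the \emph{pointwise} equivalence $\mathcal S_\alpha f(x)\approx S_\psi f(x)$ with the ordinary Lusin area function of a single $\psi\in\mathscr C_\alpha$, to which the full weighted Calder\'on--Zygmund theory then applies directly.
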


\begin{theorem}\label{ThSf}
Let $0<\alpha\leq 1$. For $1<p<\infty$ and $w\in A_p$.\\
(a). If $1<q<\infty$ and $v\in A_q$,
then there exist constant $C>0$, such that
$$
\|\mathcal S_\alpha(f)\|_{(L_w^p,L_v^q)_t(\mathbb R^n)}\leq C ||f||_{(L_w^p,L_v^q)_t(\mathbb R^n)}.
$$
(b). If $q=1$ and $v\in A_1$, then there exist constant $C>0$, such that
$$
\|\mathcal S_\alpha(f)\|_{W(L_w^p,L_v^1)_t(\mathbb R^n)}\leq C ||f||_{(L_w^p,L_v^1)_t(\mathbb R^n)}.
$$
The positive constant $C$ is independent of $f$ and $t$.
\end{theorem}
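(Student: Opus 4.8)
The plan is to reproduce, almost verbatim, the argument used for Theorem \ref{ThTf}, since—as the authors observe at the start of Section \ref{s4}—that proof rests on only two facts: the boundedness of the Hardy--Littlewood maximal operator on the weighted amalgam spaces (Theorem \ref{ThMf}) and the weighted $L^p_w$ strong and weak $(1,1)$ estimates for the operator itself. For $\mathcal S_\alpha$ the latter is precisely Lemma \ref{ProSf}, so the whole machinery is available. The auxiliary tools are the duality identity $[(L^r_w,L^s_v)_t]'=(L^{r'}_{w'},L^{s'}_{v'})_t$ (Lemma \ref{DuiO}), H\"older's inequality on amalgam spaces (Lemma \ref{Holder}), and the Rubio de Francia observation, used repeatedly in Sections \ref{s2}--\ref{s3}, that $[M(|g|^{1/\gamma})]^{\gamma}\in A_1$ for $\gamma>1$ with a constant independent of $g$.

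For part (a) with $1<q<\infty$, first pick $\theta\in(1,\min\{p,q\})$ and set $r=p/\theta>1$, $s=q/\theta>1$, so that $\||\mathcal S_\alpha f|^{\theta}\|_{(L^r_w,L^s_v)_t}=\|\mathcal S_\alpha f\|^{\theta}_{(L^p_w,L^q_v)_t}$. By Lemma \ref{DuiO} there is $g\ge 0$ with $\|g\|_{(L^{r'}_{w'},L^{s'}_{v'})_t}\le 1$ and $\|\mathcal S_\alpha f\|^{\theta}_{(L^p_w,L^q_v)_t}=\int_{\mathbb R^n}|\mathcal S_\alpha f(x)|^{\theta}g(x)\,dx$. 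Put $\omega=[M(|g|^{1/\gamma})]^{\gamma}$; since $g\le\omega$ pointwise and $\omega\in A_1\subset A_\theta$, Lemma \ref{ProSf} gives $\int|\mathcal S_\alpha f|^{\theta}\omega\lesssim\int|f|^{\theta}\omega$, and then Lemma \ref{Holder} followed by the boundedness of $M$ on the dual amalgam space (Theorem \ref{ThMf}(a)) closes the chain exactly as in Theorem \ref{ThTf}, yielding $\|\mathcal S_\alpha f\|_{(L^p_w,L^q_v)_t}\lesssim\|f\|_{(L^p_w,L^q_v)_t}$.

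For part (b) with $q=1$, fix $\lambda>0$ and, using Lemma \ref{DuiO} again, choose $g\ge 0$ with $\|g\|_{(L^{p'}_{w'},L^{\infty}_{v'})_t}\le 1$ and $\|\chi_{\{|\mathcal S_\alpha f|>\lambda\}}\|_{(L^p_w,L^1_v)_t}=\int_{\mathbb R^n}\chi_{\{|\mathcal S_\alpha f|>\lambda\}}(x)g(x)\,dx$; set $\omega=[M(|g|^{1/\gamma})]^{\gamma}\in A_1$ with $g\le\omega$. The weak $(1,1)$ estimate in Lemma \ref{ProSf} gives $\lambda\,\omega(\{|\mathcal S_\alpha f|>\lambda\})\lesssim\int|f|\omega$, hence $\lambda\|\chi_{\{|\mathcal S_\alpha f|>\lambda\}}\|_{(L^p_w,L^1_v)_t}\lesssim\int|f|\omega$. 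Taking the supremum over $\lambda>0$ and then applying Lemma \ref{Holder} together with the boundedness of $M$ on $(L^{p'}_{w'},L^{\infty}_{v'})_t$ (the $q=\infty$ endpoint of Theorem \ref{ThMf}(a)) gives $\|\mathcal S_\alpha f\|_{W(L^p_w,L^1_v)_t}\lesssim\|f\|_{(L^p_w,L^1_v)_t}$.

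The argument contains no genuinely hard step; the one thing requiring care—and hence the \emph{main obstacle}—is the exponent and weight bookkeeping in the Rubio de Francia reduction: one must check that $\omega=[M(|g|^{1/\gamma})]^{\gamma}$ is an $A_1$ weight with constant independent of $g$, that the conjugate exponents $r',s',p'$ stay within the ranges in which Lemma \ref{ProSf} and Theorem \ref{ThMf} have been proved, and that the relevant dual weights belong to the Muckenhoupt classes required by Theorem \ref{ThMf}, including the $q=\infty$ case on the dual side. Because the structure is identical to that of Theorem \ref{ThTf}, the routine details may be omitted.
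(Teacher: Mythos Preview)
Your proposal is correct and follows exactly the approach indicated by the paper: at the start of Section~\ref{s4} the authors state that the proofs of the theorems there, including Theorem~\ref{ThSf}, are obtained by repeating the argument of Theorem~\ref{ThTf} with the weighted $L^p_w$ and weak $(1,1)$ bounds for the specific operator (here Lemma~\ref{ProSf}) in place of Lemma~\ref{ProTf}, and they omit the details. Your write-up supplies precisely those details in the same duality/Rubio de Francia framework, so nothing further is needed.
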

\hspace*{-0.6cm}\textbf{\bf Competing interests}\\
The authors declare that they have no competing interests.\\

\hspace*{-0.6cm}\textbf{\bf Funding}\\
The research was supported by Natural Science Foundation of China (Grant No. 12061069).\\

\hspace*{-0.6cm}\textbf{\bf Authors contributions}\\
All authors contributed equality and significantly in writing this paper. All authors read and approved the final manuscript.\\

\hspace*{-0.6cm}\textbf{\bf Acknowledgments}\\
All authors would like to express their thanks to the referees for valuable advice regarding previous version of this paper.\\

\hspace*{-0.6cm}\textbf{\bf Authors detaials}\\
Yuan Lu(Luyuan\_y@163.com) and Jiang Zhou(zhoujiang@xju.edu.cn),
College of Mathematics and System Science, Xinjiang University, Urumqi, 830046, P.R China.\\
Songbai Wang(haiyansongbai@163.com),
College of Mathematics and Statistics, Chongqing Three Gorges University, Chongqing 404130, P.R China.\\
	
\end{document}